\newif\ifArXiv
\newtheorem{THM}{Theorem}[section]
\newtheorem{LEM}[THM]{Lemma}
\newtheorem{COR}[THM]{Corollary}
\newcommand\abs[1]{\left\lvert #1\right\rvert}
\def\ord(#1){\abs{#1}}
\def\shift(#1)(#2){\!\!\downarrow\!{}^{#1}_{\raise .01ex\vbox to 0pt{\vss\hbox{$\scriptstyle #2$}}}\,}
\def\ucl(#1){\lfloor #1 \rfloor}
\def\dcl(#1){\lceil #1 \rceil}
\def\specrel#1#2{\mathrel{\mathop{\kern0pt #1}\limits_{#2}}}
\def\starorder#1{\langle#1\rangle}
\newcommand\A{\mathcal A}
\newcommand\B{\mathcal B}
\newcommand\T{\mathcal T}
\newcommand\V{\mathcal V}
\renewcommand\P{\mathcal P}
\def\F{\mathcal F}
\def\lowfwd #1#2#3{{\mathop{\kern0pt #1}\limits^{\kern#2pt\raise.#3ex
\vbox to 0pt{\hbox{$\scriptscriptstyle\rightarrow$}\vss}}}}
\def\lowbkwd #1#2#3{{\mathop{\kern0pt #1}\limits^{\kern#2pt\raise.#3ex
\vbox to 0pt{\hbox{$\scriptscriptstyle\leftarrow$}\vss}}}}
\def\ve{\kern-1pt\lowfwd e{1.5}2\kern-1pt}
\def\vedash{{\mathop{\kern0pt e\lower.5pt\hbox{${}
     \scriptstyle'$}}\limits^{\kern0pt\raise.02ex
     \vbox to 0pt{\hbox{$\scriptscriptstyle\rightarrow$}\vss}}}}
\def\evdash{{\mathop{\kern0pt e\lower.5pt\hbox{${}
     \scriptstyle'$}}\limits^{\kern0pt\raise.02ex
     \vbox to 0pt{\hbox{$\scriptscriptstyle\leftarrow$}\vss}}}}
\def\vf{\kern-1pt\lowfwd f{1.5}2\kern-1pt}
\def\vr{\lowfwd r{1.5}2}
\def\rv{\lowbkwd r02}
\def\vrdash{{\mathop{\kern0pt r\lower.5pt\hbox{${}
     \scriptstyle'$}}\limits^{\kern0pt\raise.02ex
     \vbox to 0pt{\hbox{$\scriptscriptstyle\rightarrow$}\vss}}}}
\def\rvdash{{\mathop{\kern0pt r\lower.5pt\hbox{${}
     \scriptstyle'$}}\limits^{\kern0pt\raise.02ex
     \vbox to 0pt{\hbox{$\scriptscriptstyle\leftarrow$}\vss}}}}
\def\vrddash{{\mathop{\kern0pt r\lower.5pt\hbox{${}
     \scriptstyle''$}}\limits^{\kern0pt\raise.02ex
     \vbox to 0pt{\hbox{$\scriptscriptstyle\rightarrow$}\vss}}}}
\def\rvddash{{\mathop{\kern0pt r\lower.5pt\hbox{${}
     \scriptstyle''$}}\limits^{\kern0pt\raise.02ex
     \vbox to 0pt{\hbox{$\scriptscriptstyle\leftarrow$}\vss}}}}
\def\vrone{\lowfwd {r_1}02}
\def\vs{\lowfwd s{1.5}1}
\def\sv{\lowbkwd s{1.5}1}
\def\vso{\lowfwd {s_0}11}
\def\svo{\lowbkwd {s_0}02}
\def\vsidash{{\mathop{\kern0pt s_i\kern-3.5pt\lower.3pt\hbox{${}
     \scriptstyle'$}}\limits^{\kern0pt\raise.02ex
     \vbox to 0pt{\hbox{$\scriptscriptstyle\rightarrow$}\vss}}}}
\def\vS{{\vec S}} 
\def\vSr{{\vec S}_{\raise.1ex\vbox to 0pt{\vss\hbox{$\scriptstyle\ge\vr$}}}}
\def\vSrone{{\vec S}_{\raise.1ex\vbox to 0pt{\vss\hbox{$\scriptstyle\ge\vrone$}}}}
\def\vSdash{{\mathop{\kern0pt S\lower-1pt\hbox{${}
     \scriptstyle'$}}\limits^{\kern2pt\raise.1ex
     \vbox to 0pt{\hbox{$\scriptscriptstyle\rightarrow$}\vss}}}}
\def\vsdash{{\mathop{\kern0pt s\lower.5pt\hbox{${}
     \scriptstyle'$}}\limits^{\kern0pt\raise.02ex
     \vbox to 0pt{\hbox{$\scriptscriptstyle\rightarrow$}\vss}}}}
\def\svdash{{\mathop{\kern0pt s\lower.5pt\hbox{${}
     \scriptstyle'$}}\limits^{\kern0pt\raise.02ex
     \vbox to 0pt{\hbox{$\scriptscriptstyle\leftarrow$}\vss}}}}
\def\vt{\lowfwd t{1.5}1}
\def\tv{\lowbkwd t{1.5}1}
\def\vU{{\vec U}} 
\def\sub{\subseteq}
\def\supe{\supseteq}
\def\sm{\smallsetminus}
\def\td{tree-decom\-po\-si\-tion}
\def\pd{path-decom\-po\-si\-tion}
\def\bd{branch-decom\-po\-si\-tion}
\def\?#1{\vadjust{\vbox to 0pt{\vss\vskip-8pt\leftline{%
     \llap{\hbox{\vbox{\pretolerance=-1
     \doublehyphendemerits=0\finalhyphendemerits=0
     \hsize16truemm\tolerance=10000\small
     \lineskip=0pt\lineskiplimit=0pt
     \rightskip=0pt plus16truemm\baselineskip8pt\noindent
     \hskip0pt        
     #1\endgraf}\hskip7truemm}}}\vss}}}
\def\COMB#1{\vadjust{\vbox to 0pt{\vss\vskip-8pt\leftline{
     \llap{\hbox{\vbox{\pretolerance=-1
     \doublehyphendemerits=0\finalhyphendemerits=0
     \hsize16truemm\tolerance=10000\small
     \lineskip=0pt\lineskiplimit=0pt
     \rightskip=0pt plus16truemm\baselineskip8pt\noindent
     \hskip0pt        
     #1\endgraf}\hskip7truemm}}}\vss}}}
\newcommand\COMMENT[1]{}
\def\COMB#1{}
\title{Tangle-tree duality:\\ in graphs, matroids and beyond%
   \ifArXiv\footnote{This is an extended version of~\cite{TangleTreeGraphsMatroids} available only in preprint form.}\fi}
\author{Reinhard Diestel\\
  Mathematisches Seminar, Universit\"at Hamburg
� \and�
� Sang-il Oum%
\thanks{Supported by the National Research Foundation of Korea (NRF) grant funded by the Korea government (MSIT) (No. NRF-2017R1A2B4005020).}
�\\
� KAIST, Daejeon, 34141 South Korea}
\begin{document}
\abovedisplayshortskip=-3pt plus3pt
\belowdisplayshortskip=6pt

\maketitle

\begin{abstract}\noindent
  We apply a recent tangle-tree duality theorem in abstract separation systems to derive tangle-tree-type duality theorems for width-parameters in graphs and matroids. We further derive a duality theorem for the existence of clusters in large data sets.

Our applications to graphs include new, tangle-type, duality theorems for tree-width, path-width, and \td s of small adhesion. Conversely, we show that carving width is dual to edge-tangles.%
   \COMB{{\bf Referees:}\\[6pt]  Changes resulting from your comments (other\,than typos) are indicated in the margin, like this comment.\\[6pt] See separate responses for suggestions we did not implement.}
   For matroids we obtain a tangle-type duality theorem for tree-width.

Our results can also be used to derive short proofs of all the classical duality theorems for width para\-me\-ters in graph minor theory, such as path-width, tree-width, branch-width and rank-width.%
  \COMMENT{}
   \end{abstract}

\section{Introduction}\label{sec:intro}

There are a number of theorems in the structure theory of sparse graphs that assert a duality between high connectivity present somewhere in the graph and an overall tree structure. For example, a graph has small tree-width if and only if it contains no large-order bramble. Amini, Lyaudet, Mazoit, Nisse and Thomass\'e~\cite{MazoitPartition, MazoitPushing}%
   \COMB{Lyaudet and \cite{MazoitPushing} added (Referee~2)}
   generalized the notion of a bramble to give similar duality theorems for other width parameters, including branch-width, rank-width and matroid tree-width. The highly cohesive substructures, or {\em HCSs}, dual to low width in all these cases are what we call {\em concrete\/}~HCSs: like brambles, they are sets of edges that hang together in a certain specified way.

In~\cite{TangleTreeAbstract} we considered another type of HCSs for graphs and matroids, which we call {\em abstract\/} HCS. These are modelled on the notion of a {\em tangle\/} introduced by Robertson and Seymour~\cite{GMX} for the proof of the graph minor theorem. They are orientations of all the separations of a graph or matroid, up to some given order, that are `consistent' in a way specified by a set~$\F$. This $\F$ can be varied to give different notions of consistency, leading to different notions of $\F$-{\em tangles\/}. 

In~\cite[Theorem 4.3]{TangleTreeAbstract} we proved a general duality theorem for $\F$-tangles in an abstract setting that includes, but goes considerably beyond, graphs and matroids. Applied to graphs and matroids, the theorem says that a graph or matroid not containing an $\F$-tangle has a certain type of tree structure, the type depending on the choice of~$\F$. Conversely, this tree structure clearly precludes the existence of an $\F$-tangle, and thus provides an easily checked certificate for their possible nonexistence.%

Classical tangles of graphs are examples of $\F$-tangles for a suitable choice of~$\F$, so our abstract duality theorem from~\cite{TangleTreeAbstract} implies a duality theorem for classical tangles. The tree structures we obtain as witnesses for the non-existence of such tangles differ slightly from the {\em branch-decompositions\/} of graphs featured in the tangle-tree duality theorem of Robertson and Seymour~\cite{GMX}. While our tree structures are squarely based on graph separations, their branch-decompositions are, in spirit, translated from decompositions of the graph's cycle matroid, which takes a toll for low-order tangles where our result is a little cleaner. 

Conversely, we show that $\F$-tangles can be used to witness large tree-width or path-width of graphs, giving new duality theorems for these width-parameters. Like the classical, bramble-based, tree-width duality theorem of Seymour and Thomas~\cite{ST1993GraphSearching}, and their duality theorem for path-width with Bienstock and Robertson~\cite{QuicklyExcludingForest}, our duality theorem is exact. Our theorems easily imply theirs, but not conversely. By tweaking~$\F$, we can obtain tailor-made duality theorems also for particular kinds of \td s as desired, such as those of some specified adhesion.

Matroid tree-width was introduced only more recently, by Hlin{\v e}n{\'y} and Whittle~\cite{HlinenyWhittle}, and we shall obtain a tangle-type duality theorem for this too.

Another main result in this paper is a general width-duality theorem for $\F$-tangles of bipartitions of a set. Applied to bipartitions of the ground set of a matroid this implies the duality theorem for matroid tangles derived from~\cite{GMX} by Geelen, Gerards and Whittle~\cite{BranchDecMatroids}. Applied to bipartitions of the vertex set of a graph it implies a duality theorem for rank-width~\cite{rankwidth}. Applied again to bipartitions of the vertex set of a graph, but with a different~$\F$, it yields a duality theorem for the {\em edge-tangles\/} introduced recently by Liu~\cite{LiuEdgeTangles} as a tool for proving an Erd\H os-P\'osa-type theorem for edge-disjoint immersions of graphs. Interestingly, it turns out that the corresponding tree structures were known before: they are the {\em carvings\/} studied by Seymour and Thomas~\cite{ratcatcher}, but this duality appears to have gone unnoticed.

Our $\F$-tangle-tree duality theorem for set partitions is in fact a special case of a duality theorem for {\em set separations\/}: pairs of subsets whose union is a given set, but which may overlap (unlike the sets in a bipartition, which are disjoint).

Indeed, we first proved the abstract duality theorem of~\cite{TangleTreeAbstract} that we keep applying here for the special case of set separations. This version already implied all the results mentioned so far. However we then noticed that we needed much less to express, and to prove, this duality theorem. As a result, we can now use `abstract' $\F$-tangles to describe clusters not only in graphs and matroids but in very different contexts too, such as large data sets in various applications, and derive duality theorems casting the set into a tree structure whenever it contains no such cluster. As an example to illustrate this, we shall derive an $\F$-tangle-tree duality theorem for image analysis, which provides fast-checkable witnesses to the non-existence of a coherent region of an image, which could be used for a rigorous proof of the low quality of a picture, e.g.\ after transmission through a noisy channel.%
   \footnote{We make no claim here as to how fast such a witness might be computable, only that checking it will be fast. This is because such a check involves only linearly many separations.\COMB{Footnote added (Referee~3)}%
   \COMMENT{}
   Exploring the $\F$-tangle-tree theorem from an algorithmic point of view is a problem we would indeed like to see tackled. Any good solution is likely to depend on~$\F$, though, and thus on the concrete application considered.}

Let us briefly explain these `abstract' tangles. The oriented separations in a graph or matroid are partially ordered in a natural way, as $(A,B)\le (C,D)$ whenever $A\sub C$ and $B\supe D$. This partial ordering is inverted by the involution $(A,B)\mapsto (B,A)$. Following~\cite{AbstractSepSys}, let us call any poset $(\vS,\le)$ with an order-reversing involution $\vs\mapsto\sv$ an {\em abstract separation system\/}. If this poset is a lattice, we call it a {\em separation universe\/}. The set of all the separations of a graph or matroid, for example, is a universe, while the set of all separation of order~$<k$ for some integer~$k$ is a separation system that may fail to be a universe.

All the necessary ingredients of $\F$-tangles in graphs, and of their dual tree structures, can be expressed in terms of~$(\vS,\le)$. Indeed, two separations are nested if and only if they have orientations that are comparable under~$\le$. And the consistency requirement for classical tangles is, essentially, that if $\vr$ and~$\vs$ `lie in' the tangle (i.e., if the tangle orients $r$ as~$\vr$ and $s$ as~$\vs$) then so does their supremum $\vr\lor\vs$, if it is in~$\vS$. It turned out that this was not a special case: we could express the entire duality theorem and its proof in this abstract setting.%
   \COMMENT{}
   Put more pointedly, we never need that our separations actually `separate' anything: all we ever use is how they relate to each other in terms of~$(\vS,\le)$.

For example, the bipartitions of a (large data) set~$D$ form a separation universe: they are partially ordered by inclusion of their sides, and the involution of flipping the sides of the bipartition inverts this ordering. Depending on the application, some ways of cutting the data set in two will be more natural than others, which gives rise to a cost function on these separations of~$D$.%
   \footnote{The bipartitions of $D$ considered could be chosen according to some property that some elements of $D$ have and others lack. We could also allow the two sides to overlap where this property is unclear: then we no longer have bipartitions, but still set separations.}
   Taking this cost of a separation as its `order' then gives rise to tangles:%
   \COMMENT{}
   abstract HCSs signifying clusters. Unlike clusters defined by simply specifying a subset of~$D$, clusters defined by tangles will be fuzzy%
   \COMB{(Referee~3)}
   in terms of which data they `contain'~-- much like clusters in real-world applications.

If the cost function on the separations of our data set is submodular~-- which in practice may not be a severe restriction~-- the abstract duality theorem from~\cite{TangleTreeAbstract} can be applied to these tangles. For every integer~$k$, our application of this theorem will either find a cluster of order at least~$k$ or produce a nested `tree' set of bipartitions, all of order~${<k}$, which together witness that no such cluster exists. An example from image analysis, with a cost function chosen so that the clusters become the visible regions in a picture, is given in~\cite{MonaLisa}. This information could be used, for example, to assess the quality of an image, eg.\ after sending it through a noisy channel.

\medbreak

Our paper is organized as follows. We begin in Section~\ref{sec:def} with a brief description of abstract separation systems: just enough to state in Section~\ref{sec:AbstractDuality}, as Theorem~\ref{thm:strong}, the tangle-tree duality theorem of~\cite{TangleTreeAbstract} that we shall be applying throughout.\looseness=-1

In Section~\ref{sec:bwd} we prove our duality theorem for classical tangles as introduced by Robertson and Seymour~\cite{GMX}, and indicate how to derive their tangle-branchwidth duality  theorem if desired.

In Section~\ref{sec:bwdsubmodular} we apply Theorem~\ref{thm:strong} to set separations with a submodular order function. By specifying this order function we obtain duality theorems for rank-width, edge-tangles, and carving-width in graphs, for tangles in matroids and, as an example of an application beyond graphs and matroids, for clusters in large data sets such as coherent features in pixellated images.

In Sections \ref{sec:twd} and~\ref{sec:pathwidth} we obtain our new duality theorems for tree-width and path-width, and show how to derive from these the existing but different duality theorems for these parameters.

In Section~\ref{sec:matroid} we prove our duality theorem for matroid tree-width. In Section~\ref{sec:adhesion} we derive duality theorems for \td s of bounded adhesion.

\ifArXiv In Section~\ref{sec:partitionsub}, finally, \else The ArXiv version of this paper~\cite{TangleTreeGraphsMatroidsArXiv} has a further section in which \fi we show how our duality theorem for abstract tangles, Theorem~\ref{thm:strong}, implies the duality theorem for abstract brambles of Amini, Maz\-oit, Nisse, and Thomass\'e~\cite{MazoitPartition} under a mild additional assumption, which holds in all their applications.

\section{Abstract separation systems}\label{sec:def}

In this section we describe the basic features of abstract separation systems~\cite{AbstractSepSys}~-- just enough to state the main duality theorem from~\cite{TangleTreeAbstract} in Section~\ref{sec:AbstractDuality}, and thus make this paper self-contained.

A \emph{separation of a set} $V$ is a set $\{A,B\}$ such that $A\cup B=V$. The ordered pairs $(A,B)$ and $(B,A)$ are its {\it orientations\/}. The {\em oriented separations\/} of~$V$ are the orientations of its separations. Mapping every oriented separation $(A,B)$ to its {\it inverse\/} $(B,A)$ is an involution%
   \COMMENT{}
   that reverses the partial ordering \[(A,B)\le (C,D) :\Leftrightarrow A\subseteq C \text{ and } B\supseteq D.\]
Note that this is equivalent to $(D,C)\le (B,A)$. Informally, we think of $(A,B)$ as \emph{pointing towards}~$B$ and \emph{away from}~$A$. Similarly, if $(A,B)\le (C,D)$, then $(A,B)$ \emph{points towards} $\{C,D\}$ and its orientations, while $(C,D)$ \emph{points away from} $\{A,B\}$ and its orientations.%
   \COMB{`and its orientations' added (Referee~3)}

Generalizing these properties of separations of sets, we now give an axio\-matic definition of `abstract' separations. A {\em separation system\/} $(\vS,\le\,,\!{}^*)$ is a partially ordered set $\vS$ with an order-reversing involution~*. Its elements are called {\em oriented separations\/}. When a given element of $\vS$ is denoted as~$\vs$, its {\em inverse\/}~$\vs^*$ will be denoted as~$\sv$, and vice versa. The assumption that * be {\em order-reversing\/} means that, for all $\vr,\vs\in\vS$,
\begin{equation}\label{invcomp}
\vr\le\vs\ \Leftrightarrow\ \rv\ge\sv.
\end{equation}

A {\em separation\/} is a set of the form $\{\vs,\sv\}$, and then denoted by~$s$. We call $\vs$ and~$\sv$ the {\em orientations\/} of~$s$. The set of all such sets $\{\vs,\sv\}\sub\vS$ will be denoted by~$S$. If $\vs=\sv$, we call both $\vs$ and $s$ {\em degenerate\/}.

When a separation is introduced ahead of its elements and denoted by a single letter~$s$, its elements will then be denoted as $\vs$ and~$\sv$.%
   \footnote{It is meaningless here to ask which is which: neither $\vs$ nor $\sv$ is a well-defined object just given~$s$. But given one of them, both the other and $s$ will be well defined. They may be degenerate, in which case $s = \{\vs\} = \{\sv\}$.}
   Given a set $S'\sub S$ of separations, we write $\vSdash := \bigcup S'\sub\vS$%
   \COMMENT{}
   for the set of all the orientations of its elements. With the ordering and involution induced from~$\vS$, this is again a separation system.%
   \footnote{For $S'=S$, our definition of $\vSdash$ is consistent with the existing meaning of~$\vS$.%
   \COMMENT{}
   When we refer to oriented separations using explicit notation that indicates orientation, such as $\vs$ or $(A,B)$, we sometimes leave out the word `oriented' to improve the flow of words. Thus, when we speak of a `separation $(A,B)$', this will in fact be an oriented separation.}

Separations of sets, and their orientations, are clearly an instance of this if we identify $\{A,B\}$ with $\{(A,B),(B,A)\}$.

If a separation system~$(\vU,\le\,,\!{}^*)$ is a lattice, i.e., if there are binary operations $\vee$ and~$\wedge$ on $\vU$ such that $\vr\vee\vs$ is the supremum and $\vr\wedge\vs$ the infimum of $\vr$ and~$\vs$ in~$\vU$, we call $(\vU,\le\,,\!{}^*,\vee,\wedge)$ a {\em universe\/} of (oriented) separations. By~\eqref{invcomp}, it satisfies De~Mor\-gan's law:
\begin{equation}\label{deMorgan}
   (\vr\vee\vs)^* =\> \rv\wedge\sv.
\end{equation}%
   \COMMENT{}
   A separation system $\vS\sub\vU$, with its ordering and involution induced from~$\vU$, is {\em submodular\/} if for all $\vr,\vs\in\vS$ at least one of $\vr\land\vs$ and $\vr\lor\vs$ also lies in~$\vS$.

The oriented separations of a set~$V$ form such a universe: if $\vr = (A,B)$ and $\vs = (C,D)$, say, then $\vr\vee\vs := (A\cup C, B\cap D)$ and $\vr\wedge\vs := (A\cap C, B\cup D)$ are again oriented separations of~$V\!$, and are the supremum and infimum of $\vr$ and~$\vs$.%
   \COMMENT{}
   Similarly, the oriented separations of a graph form a universe. Its oriented separations of order~$<k$ for some fixed~$k$, however, form a separation system~$S_k$ inside this universe that may not itself be a universe with respect to $\vee$~and~$\wedge$ as defined above.%
   \COMMENT{}
   However, it is easy to check that $S_k$~is submodular.

A separation $\vr\in\vS$ is {\em trivial in~$\vS$\/}, and $\rv$ is {\em co-trivial\/}, if there exists $s \in S$ such that $\vr < \vs$ as well as $\vr < \sv$.%
   \COMMENT{}
   Note that if $\vr$ is trivial in~$\vS$ then so is every $\vrdash \le \vr$. If $\vr$ is trivial, witnessed by~$\vs$, then $\vr < \vs < \rv$ by~\eqref{invcomp}. Separations~$\vs$ such that $\vs\le\sv$, trivial or not, will be called {\em small\/}.

For example, the oriented separations of a set~$V\!$ that are trivial in the universe of all the oriented separations of~$V\!$ are those of the form $\vr = (A,B)$ with $A\sub C\cap D$ and $B\supe C\cup D = V$ for some separation $s = \{C,D\}\ne r$ of~$V\!$. The small separations $(A,B)$ of $V$ are all those with $B=V$.

Two separations $r,s$ are {\em nested\/} if they have comparable orientations; otherwise they \emph{cross}. Two oriented separations $\vr,\vs$ are {\em nested\/} if $r$ and~$s$ are nested.%
   \footnote{Terms introduced for unoriented separations may be used informally for oriented separations too if the meaning is obvious, and vice versa.}%
   \COMMENT{}
   We say that $\vr$ {\em points towards\/}~$s$, and $\rv$ {\em points away from\/}~$s$, if $\vr\le\vs$ or $\vr\le\sv$. Then two nested oriented separations are either comparable, or point towards each other, or point away from each other. A~set of separations is {\em nested\/} if every two of its elements are nested.

A set $O\sub \vS$ of oriented separations is {\em antisymmetric\/} if it does not contain the inverse of any of its nondegenerate elements. It is \emph{consistent} if there are no distinct $r,s\in S$ with orientations $\vr < \vs$ such that $\rv,\vs\in O$.%
   \COMMENT{}
   (Informally: if it does not contain orientations of distinct separations that point away from each other.)
   An \emph{orientation} of~${S}$ is a maximal antisymmetric subset of~$\vS$: a subset that contains for every $s\in{S}$ exactly one of its orientations $\vs,\sv$.

Every consistent orientation of $S$ contains all separations $\vr$ that are trivial in~$\vS$, because it cannot contain their inverse~$\rv$: if the triviality of $\vr$ is witnessed by $s\in S$, say, then $\rv$ would be inconsistent with both $\vs$ and~$\sv$.

Given a set~$\F$,%
   \COMMENT{}
   a~consistent orientation of~$S$ is an $\F$-{\em tangle\/}%
   \COMB{Footnote added (Referee~2)}%
   \footnote{The tangles introduced by Robertson and Seymour~\cite{GMX} for graphs are, essentially, the $\T_k$-tangles for the set $\T_k$ of triples of oriented separations $(A,B)$ of order less than some fixed~$k$ whose three `small' sides~$A$ together cover the graph. See Section~\ref{sec:bwd} for details.}
   if it avoids~$\F$, i.e., has no subset $F\in\F$. We think of~$\F$ as a collection of `forbidden' subsets of~$\vS$. Avoiding~$\F$ adds another degree of consistency to an already formally consistent orientation of~$S$, one that can be tailored to specific applications by designing~$\F$ in different ways. The idea is always that the oriented separations in a set $F\in\F$ collectively point to an area (of the ground set or structure which the separations in $S$ are thought to `separate') that is too small to accommodate some particular type of highly cohesive substructure.

A set $\sigma$ of nondegenerate oriented separations, possibly empty, is a \emph{star of separations} if they point towards each other: if $\vr\le\sv$ for all distinct $\vr,\vs\in\sigma$ (Fig.~\ref{fig:star}). Stars of separations are clearly nested. They are also consistent: if $\rv,\vs$ lie in the same star we cannot have $\vr < \vs$, since also $\vs\le \vr$ by the star property.
A~star~$\sigma$ need not be antisymmetric;%
   \COMMENT{}
   but if $\{\vs,\sv\}\sub\sigma$, then any other $\vr\in\sigma$ will be trivial.%
   \COMMENT{}

   \begin{figure}[htpb]
   \centering
   	  \includegraphics{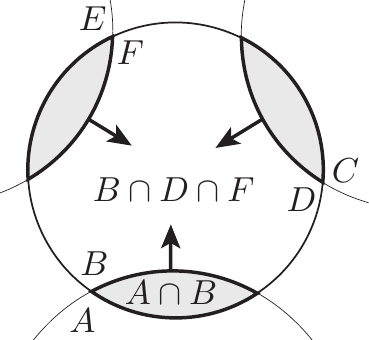}
   	  \caption{The separations $(A,B),(C,D),(E,F)$ form a 3-star}
   \label{fig:star}
   \end{figure}

Let $S$ be a set of separations.  An \emph{${S}$-tree\/} is a pair $(T,\alpha)$ of a tree%
   \footnote{Trees have at least one node%
   \COMMENT{}%
   ~\cite{DiestelBook16}.}~$T$ and a function $\alpha\colon\vec E(T)\to \vS$ from the set
 $$\vec E(T) := \{\, (x,y) : \{x,y\}\in E(T)\,\}$$
 of the \emph{orientations of} its {\em edges} to~$\vS$ such that, for every edge $xy$ of~$T$, if $\alpha(x,y)=\vs$ then $\alpha(y,x)=\sv$. It is an $S$-tree  {\em over $\F\sub 2^{\vec S}$} if, in addition, for every node $t$ of~$T$ we have $\alpha(\vec F_t)\in\F$, where
 $$\vec F_t := \{(x,t) : xt\in E(T)\}.$$

\noindent
 We shall call the set $\vec F_t\sub\vec E(T)$ the {\em oriented star at~$t$} in~$T$ (even if it is empty).%
   \COMMENT{}
   Its image $\alpha(\vec F_t)\in\F$ is said to be {\em associated with\/} $t$ in $(T,\alpha)$.

An important example of ${S}$-trees are (irredundant)%
   \COMMENT{}
   ${S}$-trees \emph{over stars}: those over some $\F$ all of whose elements are stars of separations.%
   \COMB{`irredundant' put in brackets (Referee~1)\\[6pt] Footnote added (Referee~2)}%
   \footnote{For example, a~\td\ of width~$<w$ and adhesion~$<k$ of a graph is an $S_k$-tree for the set $S_k$ of separations of order~$<k$ over the set $\F_w$ of stars $\{(A_1,B_1),\dots,(A_n,B_n)\}$ such that $|\bigcap_{i=1}^n B_i| < w$. See Section~\ref{sec:twd}.}
   In such an ${S}$-tree $(T,\alpha)$ the map~$\alpha$ preserves the \emph{natural partial ordering} on $\vec E(T)$ defined by letting $(x,y) < (u,v)$ if $\{x,y\}\ne \{u,v\}$ and the unique $\{x,y\}$--$\{u,v\}$ path in $T$ joins $y$ to~$u$ (see Figure~\ref{Streeseps}).

   \begin{figure}[htpb]
\centering
   	  \includegraphics{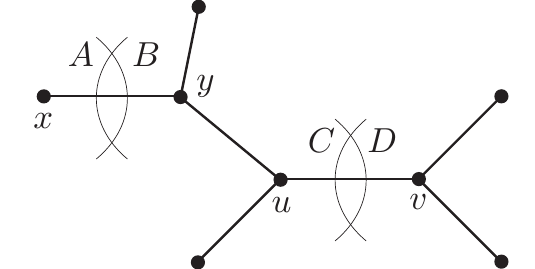}\label{Streeseps}
   	  \caption{Edges $(x,y) < (u,v)$ and separations $(A,B)= \alpha(x,y)\le\alpha(u,v)=(C,D)\!\!\!\!\!\!\!\!$}
   \label{fig:Stree}\vskip-6pt
   \end{figure}

\section{Tangle-tree duality in abstract separation\\ systems}\label{sec:AbstractDuality}

The tangle-tree duality theorem for abstract separation systems, the result from~\cite{TangleTreeAbstract} which we seek to apply in this paper to various different contexts, says the following. Let $(\vS,\le)$ be a separation system and $\F$ a collection of `forbidden' sets of separations. Then, under certain conditions, either $S$ has an $\F$-tangle or there exists an $S$-tree over~$\F$. We now define these conditions and state the theorem formally. We then prove a couple of lemmas that will help us apply it.

Let $\vr$ be a nontrivial and nondegenerate element of a separation system ${(\vS,\le\,,\!{}^*)}$ contained in some universe $(\vU,\le\,,\!{}^*,\vee,\wedge)$ of separations, the ordering and involution on~$\vS$ being induced by those of~$\vU$. Consider any $\vso\in\vS$ such that $\vr\le\vso$. As $\vr$ is nontrivial and nondegenerate, so is~$\vso$.

Let $S_{\ge\vr}$ be the set of all separations $s\in S$ that have an orientation ${\vs\ge\vr}$. Since $\vr$ is nontrivial, only one of the two orientations $\vs$ of every $s\in S_{\ge\vr}\sm\{r\}$ satisfies $\vs\ge\vr$. Letting
 $$f\shift(\!\vr)(\vso) (\vs) := \vs\vee\vso\quad\text{and}\quad f\shift(\!\vr)(\vso) (\sv) := (\vs\vee\vso)^*$$
 for all $\vs \ge \vr$ in $\vSr\sm\{\rv\}$ thus defines a map $\vSr\to\vU$, the \emph{shifting map}~$f\shift(\!\vr)(\vso)$ (Fig.~\ref{fig:shiftsep}, right). Note that $f\shift(\!\vr)(\vso)(\vr) = \vso$, since $\vr\le\vso$. Shifting maps preserve the partial ordering on a separation system, and in particular map stars to stars:

   \begin{figure}[htpb]
\centering
   	  \includegraphics{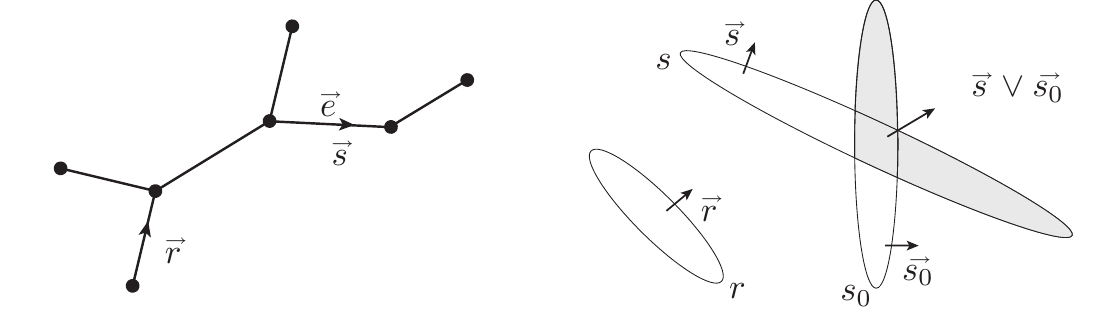}
   	  \caption{Shifting $\vs$ to $\vs\vee\vso$}
   \label{fig:shiftsep}\vskip-6pt
   \end{figure}

\begin{LEM}\label{lem:shifting} {\rm\cite{TangleTreeAbstract}}
The map $f = f\shift(\!\vr)(\vso)$ preserves the ordering $\le$ on $\vSr\sm\{\rv\}$.%
   \COMMENT{}
   In particular, $f$~maps stars to stars.%
   \COMMENT{}
\end{LEM}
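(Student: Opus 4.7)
The plan is a short case analysis that exploits De~Morgan's law~\eqref{deMorgan} to make the two defining clauses of $f$ look symmetric. Every $\vs\in\vSr\sm\{\rv\}$ is either \emph{above}~$\vr$ (meaning $\vs\ge\vr$, so $f(\vs)=\vs\vee\vso$ by the first clause) or \emph{strictly below}~$\rv$ (meaning $\vs\ne\vr$ and $\sv>\vr$, in which case the second clause applied with its ``$\vs$'' equal to our $\sv$ gives $f(\vs)=(\sv\vee\vso)^* = \vs\wedge\svo$ by De~Morgan). Nontriviality of~$\vr$ ensures that these two classes are disjoint off $\vr$ itself.

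For order preservation, I would suppose $\vs\le\vt$ in $\vSr\sm\{\rv\}$ and split into the four possible above/below combinations. The (above, above) case is immediate from monotonicity of~$\vee$. The (below, below) case reduces to (above, above) by inverting: $\tv\le\sv$ gives $(\sv\vee\vso)^*\le(\tv\vee\vso)^*$. The (below, above) case is the direct chain $f(\vs)=\vs\wedge\svo\le\vs\le\vt\le\vt\vee\vso=f(\vt)$. The (above, below) case is the main obstacle and the only place the nontriviality hypothesis enters: from $\vr\le\vs\le\vt$ and $\vr<\tv$ we get $\vr\le\vt$; when $\vt\ne\vr$ this makes $\vr$ trivial (witnessed by~$t$), contradicting our assumption, and when $\vt=\vr$ the chain forces $\vs=\vt=\vr$ so $f(\vs)\le f(\vt)$ holds trivially.

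The ``in particular'' star claim is then essentially a corollary. If $\sigma\sub\vSr\sm\{\rv\}$ is a star and $\vs\ne\vt$ lie in $\sigma$, the star property gives $\vs\le\tv$. When $\tv\ne\rv$, order preservation combined with the identity $f(\tv)=f(\vt)^*$, visible from the two defining clauses, yields $f(\vs)\le f(\vt)^*$. When $\tv=\rv$ (i.e.\ $\vt=\vr$), nontriviality of~$\vr$ forces $\vs$ to be strictly below (otherwise $\vr<\vs$ and $\vr<\sv$ both hold), and then $f(\vs)=\vs\wedge\svo\le\svo=f(\vr)^*=f(\vt)^*$ directly. Hence $f(\sigma)$ is again a star.
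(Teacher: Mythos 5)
Your proof is correct, and it takes what is essentially the only natural route: partition the domain $\vSr\sm\{\rv\}$ into the ``above'' part (where $f$ acts as $\vs\mapsto\vs\vee\vso$) and the ``strictly below'' part (where, via De~Morgan, $f$ acts as $\vs\mapsto\vs\wedge\svo$), observe that nontriviality of~$\vr$ makes this a well-defined dichotomy off~$\vr$, and then run the four-way case analysis, using nontriviality again to rule out the one problematic transition. The paper itself does not supply a proof here but cites~\cite{TangleTreeAbstract}; your reconstruction matches the standard argument from that source in substance, including the use of the identity $f(\tv)=f(\vt)^*$ to deduce the star property from order preservation and the separate treatment of the pivot case $\vt=\vr$.
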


Let us say that $\vso$ \emph{emulates}~$\vr$ in~$\vS$ if $\vso\ge\vr$ and every $\vs\in\vS\sm\{\rv\}$ with $\vs\ge\vr$ satisfies $\vs\vee\vso\in\vS$. We call $\vS$ \emph{separable} if for every two nontrivial and nondegenerate $\vr,\rvdash\in\vS$ such that $\vr\le\vrdash$ there exists an $s_0\in S$ with an orientation $\vso$ that emulates~$\vr$ and its inverse $\svo$ emulating~$\rvdash$.%
   \COMMENT{}

Given a set $\F\sub 2^{\vU}\!$ of stars of separations, we say that $\vso\in\vS$ \emph{emulates}~${\vr\in\vS}$ in~$\vS$ {\em for~$\F$} if $\vso$ emulates $\vr$ in~$\vS$ and for any star $\sigma\sub \vSr\sm\{\rv\}$ in~$\F$ that has an element $\vs\ge\vr$ we also have $f\shift(\!\vr)(\vso) (\sigma)\in\F$.%
   \COMMENT{}

Let us say that a set $\F$ {\it forces\/} the separations $\vs\in\vS$ for which $\{\sv\}\in\F$. And that $\vS$ is \emph{$\F$-separable} if for all nontrivial and nondegenerate $\vr,\rvdash\in\vS$ that are not forced by~$\F$ and satisfy $\vr\le\vrdash$ there exists an $s_0\in S$ with an orientation~$\vso$ that emulates $\vr$ in~$\vS$ for~$\F$ and such that $\svo$ emulates $\rvdash$ in~$\vS$ for~$\F$. (As earlier, any such $\vso$ will also be nontrivial and nondegenerate.)

Recall that an orientation $O$ of~$S$ is an {\em $\F$-tangle\/} if it is consistent and avoids~$\F$. We call $\F$ {\em standard\/} for~$\vS$ if it forces all $\vs\in\vS$ that are trivial in~$\vS$. The `strong duality theorem' from~\cite{TangleTreeAbstract} now reads as follows.

\begin{THM}[Tangle-tree duality theorem for abstract separation systems]\label{thm:strong} 
~\\  Let $(\vU,\le\,,\!{}^*,\vee,\wedge)$ be a universe of  separations containing a separation system $(\vS,\le\,,\!{}^*)$. Let $\F\sub 2^\vU\!\!$ be a set of stars, standard for~$\vS$. If $\vS$ is $\F$-separable,%
   \COMMENT{}
   exactly one of the following assertions holds:
\begin{enumerate}[\rm(i)]\itemsep0pt
  \item There exists an $\F$-tangle of~$S$.
  \item There exists an ${S}$-tree over $\F$.
  \end{enumerate}
\end{THM}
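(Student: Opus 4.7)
The two outcomes exclude each other by a sink-argument on the tree. If both an $\F$-tangle $O$ of~$S$ and an $S$-tree $(T,\alpha)$ over~$\F$ existed, I would orient each edge $\{x,y\}$ of~$T$ from $x$ to~$y$ whenever $\alpha(x,y)\in O$; this is well-defined because $O$ contains exactly one orientation of each separation and $\alpha$ respects the involution. Since $T$ is finite, some node~$t$ is a sink under this orientation, and then $\alpha(\vec F_t)\sub O$. But $\alpha(\vec F_t)\in\F$, contradicting that $O$ avoids~$\F$.

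For the substantive direction, I would induct on~$|S|$, showing that absence of an $\F$-tangle yields an $S$-tree over~$\F$. Base cases collect trivial situations: if some $\vs\in\vS$ is forced by~$\F$ (i.e.\ $\{\sv\}\in\F$) then a one-edge tree with a forced leaf suffices, and standardness ensures this captures trivial separations too; if $S$ is small enough to have no nontrivial, nondegenerate separations, the unique orientation compelled by standardness is either an $\F$-tangle (contradiction) or itself contains a member of~$\F$, giving a tiny $S$-tree. For the inductive step, the non-existence of an $\F$-tangle is used to locate a critical pair $\vr\le\vrdash$, neither forced by~$\F$, whose attempted extension into any consistent $\F$-avoiding orientation fails. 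Invoke $\F$-separability to pick $s_0\in S$ with $\vso$ emulating~$\vr$ in~$\vS$ for~$\F$ and $\svo$ emulating~$\vrdash$ for~$\F$. This cleaves $\vS$ into two strictly smaller subsystems $\vS_1,\vS_2$, one ``above'' each of $\vso,\svo$. After verifying that each inherits the hypotheses — no $\F$-tangle (descended from $\vS$), $\F$-separability, and standardness of~$\F$ — apply the induction hypothesis to obtain $S$-trees $(T_i,\alpha_i)$ over~$\F$ for each side. The shifting maps $f\shift(\!\vr)(\vso)$ and $f\shift(\!\vrdash)(\svo)$ then transport these $S$-trees back into the original~$\vS$: by Lemma~\ref{lem:shifting} they send stars to stars, and by the emulation-for-$\F$ property, images of stars in~$\F$ still lie in~$\F$. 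Gluing the two shifted trees along a new edge labeled by~$\vso$ and~$\svo$ yields the desired $S$-tree over~$\F$.

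The main obstacle will be isolating the critical pair $\vr\le\vrdash$ in the inductive step, and then descending ``no $\F$-tangle of $S$'' correctly into the two subsystems $\vS_1,\vS_2$; it is this descent that dictates how tight the choice of the pair must be. A companion concern is making the gluing at the new edge rigorously produce a valid $S$-tree over~$\F$: after the shifts, the stars at the two endpoints of the new edge now incorporate $\vso$ or~$\svo$ (coming from the fresh edge), and one must check these augmented stars still lie in~$\F$. Standardness of~$\F$ together with the exact form of ``emulates for~$\F$'' are precisely what is designed to make these two checks go through, but the bookkeeping is delicate: a clumsy choice of $r$ may split $\vS$ unevenly or destroy $\F$-separability on one side, so some effort is required to argue that a well-chosen pair always exists.
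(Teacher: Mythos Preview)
This theorem is not proved in the present paper: it is quoted from~\cite{TangleTreeAbstract} and then applied. So there is no ``paper's proof'' here to compare against; I can only assess your sketch against the argument in~\cite{TangleTreeAbstract}.

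Your exclusion argument via a sink in the tree is correct and is exactly how mutual exclusivity is shown.

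For the main direction, however, your inductive scheme does not work as stated. You propose to induct on~$|S|$ and, at the inductive step, to ``cleave $\vS$ into two strictly smaller subsystems $\vS_1,\vS_2$, one above each of $\vso,\svo$'', recurse there, and then shift the resulting trees back into~$\vS$. The problem is that there are no such subsystems doing what you need. The natural candidates $\vS_{\ge\vso}$ and $\vS_{\ge\svo}$ are closed under the involution, but they are not strictly smaller in general (each still contains~$s_0$), they need not cover~$\vS$ (separations crossing~$s_0$ may lie in neither), and most importantly there is no reason they inherit $\F$-separability or that the restricted~$\F$ remains standard for them. Nor is it clear how ``no $\F$-tangle of~$S$'' would descend to ``no $\F$-tangle of~$S_i$''. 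So the recursion has no well-defined smaller instance to land on.

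The argument in~\cite{TangleTreeAbstract} keeps $\vS$ fixed throughout. The shifting map is used not to transport trees between different separation systems, but to move \emph{rooted} $S$-trees within the same~$\vS$: an irredundant $S$-tree over~$\F$ with a leaf whose edge is labelled~$\vr$ is shifted, via $f\shift(\!\vr)(\vso)$, to one with a leaf labelled~$\vso$, whenever $\vso$ emulates~$\vr$ for~$\F$. One then argues directly: assuming no $S$-tree over~$\F$ exists, one shows that for every $s\in S$ at most one of $\vs,\sv$ admits such a rooted tree (else glue the two to get a full $S$-tree over~$\F$), and from this one extracts a consistent $\F$-avoiding orientation. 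The role of $\F$-separability is to guarantee that whenever two such rooted trees exist at comparable $\vr\le\vrdash$, one can shift both to a common $s_0$ and glue. So your ingredients---shifting and gluing---are the right ones, but they are deployed inside a single fixed~$\vS$, not across a shrinking hierarchy of subsystems.
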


Often, the proof that $\vS$ is $\F$-separable can be split into two easier parts, a proof that $\vS$ is separable and one that $\F$ is \emph{closed under shifting} in~$\vS$: that whenever $\vso\in\vS$ emulates (in~$\vS$) some nontrivial and nondegenerate $\vr\le\vso$ not forced by~$\F$, then it does so for~$\F$. Indeed, the following lemma is immediate from the definitions:

\begin{LEM}\label{lem:Fsep}
  If $\vS$ is separable and $\F$ is closed under shifting in~$\vS$, then $\vS$ is $\F$-separable.\qed
\end{LEM}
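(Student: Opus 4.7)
The plan is straightforward chaining of the definitions, and the main challenge is simply to keep track of which separation plays which role. I would begin by unpacking what we need to prove: given arbitrary nontrivial, nondegenerate $\vr,\vrdash\in\vS$ with $\vr\le\vrdash$ that are not forced by~$\F$, we must produce an $s_0\in S$ with orientation $\vso$ such that $\vso$ emulates~$\vr$ in~$\vS$ \emph{for}~$\F$ and $\svo$ emulates~$\vrdash$ in~$\vS$ \emph{for}~$\F$.

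The first step is to apply separability of~$\vS$ to the pair $\vr,\vrdash$ (this is legitimate because $\vr,\vrdash$ are nontrivial, nondegenerate, and satisfy $\vr\le\vrdash$). This produces an $s_0\in S$ together with an orientation $\vso$ that emulates~$\vr$ in~$\vS$ and whose inverse $\svo$ emulates~$\vrdash$ in~$\vS$. Note that as a byproduct $\vso$ and $\svo$ are themselves nontrivial and nondegenerate, since $\vr\le\vso$ and $\vrdash\le\svo$ and any separation below a nontrivial/nondegenerate one is nontrivial/nondegenerate in the opposite sense; in particular this means the shifting maps $f\shift(\!\vr)(\vso)$ and $f\shift(\!\rvdash)(\svo)$ are well defined.

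The second step is to invoke that $\F$ is closed under shifting in~$\vS$. Applied with the pair $(\vr,\vso)$: since $\vr\le\vso$ is nontrivial, nondegenerate, not forced by~$\F$, and $\vso$ emulates~$\vr$ in~$\vS$, the hypothesis gives us that $\vso$ in fact emulates~$\vr$ in~$\vS$ \emph{for}~$\F$. Symmetrically, applying closure under shifting with the pair $(\vrdash,\svo)$ yields that $\svo$ emulates~$\vrdash$ in~$\vS$ for~$\F$. Together these two conclusions are exactly what the definition of $\F$-separability demands, so we are done.

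Since the argument is a one-step composition of the two hypotheses, there is no genuine obstacle; the only care needed is in noting that the two applications of the closure-under-shifting property require the inputs $\vr$ and~$\vrdash$ to be non-forced by~$\F$, which is precisely the assumption made in the definition of $\F$-separability. No use is made of the lattice structure, of any specifics of the map $f\shift(\!\vr)(\vso)$ beyond those already packaged into the emulation definitions, nor of the standardness of~$\F$.
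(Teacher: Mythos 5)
Your argument is correct and is precisely the one the paper has in mind — the lemma is stated with \verb|\qed| because it is considered immediate from the definitions, and you supply exactly that chain: invoke separability to produce $\vso$ emulating~$\vr$ and $\svo$ emulating its partner, then invoke closure under shifting twice to upgrade each ``emulates'' to ``emulates for~$\F$'', using that $\vr$ and the second separation are not forced by~$\F$. One notational slip worth fixing: the definitions of separability and $\F$-separability name the second separation $\rvdash$ (so that the comparability hypothesis is $\vr\le\vrdash$, i.e.\ $\vr$ is below the \emph{inverse} of $\rvdash$), and accordingly $\svo$ must emulate $\rvdash$, not $\vrdash$. You consistently write $\vrdash$ where the paper has $\rvdash$; if taken literally this misstates what separability delivers and what the second application of closure-under-shifting applies to. The underlying logic is unaffected, but the symbols should be corrected to match the definitions.
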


The separability of~$\vS$ will often be established as follows.
Let us call a real function $\vs\mapsto\ord(\vs)$ on a universe $(\vU,\le\,,\!{}^*,\vee,\wedge)$%
   \COMMENT{}
   of oriented separations an \emph{order function} if it is non-negative, symmetric and submodular, that is, if $0\le \ord(\vs)=\ord(\sv)$ and 
\[\ord(\vr\vee\vs) + \ord(\vr\wedge\vs)\le \ord(\vr)+\ord(\vs)\]
for all $\vr,\vs\in \vU$. We then call $\ord(s) := \ord(\vs)$ the \emph{order} of $s$ and of~$\vs$. For every positive integer~$k$,
 $$\vS_k := \{\vs\in \vU : \ord(\vs) < k\}$$
is a submodular separation system (though not necessarily a universe).%
   \COMMENT{}

\begin{LEM}\label{lem:separable}
  Every such $\vS_k$ is separable.
\end{LEM}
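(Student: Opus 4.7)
Given nontrivial and nondegenerate $\vr,\rvdash\in\vS_k$ with $\vr\le\vrdash$, my plan is to pick $\vso$ to be a separation in $\vS_k$ of minimum order subject to $\vr\le\vso\le\vrdash$. Such an $\vso$ exists since $\vr$ itself satisfies these constraints, and it is automatically nontrivial and nondegenerate because $\vso\ge\vr$. By construction $\vso\ge\vr$ and $\svo\ge\rvdash$, so the ``$\ge$''-parts of the two emulation conditions hold for free; only the shifting conditions require verification.

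To check that $\vso$ emulates $\vr$ in~$\vS_k$, I take any $\vs\in\vS_k\sm\{\rv\}$ with $\vs\ge\vr$ and argue by contradiction. If $\vs\vee\vso\notin\vS_k$, then the submodularity of $\ord$ forces $\ord(\vs\wedge\vso)<k$, so $\vs\wedge\vso\in\vS_k$. Moreover $\vr\le\vs\wedge\vso\le\vso\le\vrdash$, so $\vs\wedge\vso$ is one of the candidates over which $\vso$ was chosen of minimum order. Hence $\ord(\vso)\le\ord(\vs\wedge\vso)$, and plugging this back into the submodular inequality yields $\ord(\vs\vee\vso)\le\ord(\vs)<k$, a contradiction.

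The argument that $\svo$ emulates $\rvdash$ is symmetric, but requires one De~Morgan twist. Given $\vt\in\vS_k\sm\{\vrdash\}$ with $\vt\ge\rvdash$, equivalently $\tv\le\vrdash$, I again assume $\vt\vee\svo\notin\vS_k$. Applying~\eqref{deMorgan}, the orders of the join $\vt\vee\svo$ and meet $\vt\wedge\svo$ coincide with those of $\tv\wedge\vso$ and $\tv\vee\vso$ respectively. Submodularity then forces $\tv\vee\vso\in\vS_k$, and this separation satisfies $\vr\le\vso\le\tv\vee\vso\le\vrdash$ (since $\tv\le\vrdash$ and $\vso\le\vrdash$), so by minimality $\ord(\tv\vee\vso)\ge\ord(\vso)$. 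The rest of the submodular inequality then delivers the desired contradiction $\ord(\vt\vee\svo)<k$.

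The only real bookkeeping point, and essentially the entire ``obstacle,'' is the De~Morgan translation in the second part: the shifting condition is phrased in terms of joins with $\svo$, whereas the minimality of $\ord(\vso)$ naturally constrains separations expressed in terms of $\vso$, so~\eqref{deMorgan} is needed to bridge the two.
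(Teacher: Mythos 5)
Your proof is correct and follows the same strategy as the paper: choose $\vso$ of minimum order with $\vr\le\vso\le\vrdash$, then use the minimality together with submodularity to verify the shifting condition. The only differences are stylistic: you argue by contradiction where the paper argues directly, you minimize over $\vS_k$ rather than over $\vU$ (equivalent, since $\vr$ is itself a candidate of order $<k$), and you spell out the second half via De Morgan where the paper simply invokes symmetry of the setup under the involution.
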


\begin{proof}
Given nontrivial and nondegenerate $\vr,\rvdash\in\vS_k$ such that $\vr\le\vrdash$, we have to find an $\vso\in \vS_k$ such that $\vso$ emulates $\vr$ in~$\vS_k$ and $\svo$ emulates $\rvdash$ in~$\vS_k$. We choose $\vso\in\vU$ of minimum order with $\vr\le\vso\le\vrdash$.%
   \COMMENT{}
   Since $\vr$ is a candidate for~$\vso$, we have $\ord(\vso)\le\ord(\vr)$ and hence $\vso\in\vS_k$. We show that $\vso$ emulates~$\vr$; by symmetry, this will imply also that $\svo$ emulates~$\rvdash$.

Let us show that every $\vs\ge\vr$ in~$\vS_k$ satisfies $\vs\vee\vso\in\vS_k$.%
   \COMMENT{}
   We prove this by showing that $\ord(\vs\vee\vso)\le \ord(\vs)$,%
   \COMMENT{}
   which will follow from submodularity once we have shown that $\ord(\vs\wedge\vso)\ge \ord(\vso)$. This, however, holds since $\vs\wedge\vso$ was a candidate for the choice of~$\vso$: we have $\vr\le\vs\wedge\vso$ since $\vr\le\vs$ and $\vr\le\vso$, while $\vs\wedge\vso\le\vso\le\vrdash$.
\end{proof}

For the rest of this paper except in \ifArXiv Sections~\ref{sec:bwdsubmodular} and~\ref{sec:partitionsub}\else Section~\ref{sec:bwdsubmodular}\fi, whenever we consider a graph $G=(V,E)$ it will have at least one vertex, and we consider the universe $\vU$ of its (oriented) \emph{vertex separations}, the separations $(A,B)$ of~$V$ such that $G$ has no edge between $A\sm B$ and~$B\sm A$, with the order function
 $$\ord(A,B) := \abs{A\cap B}.$$
Note that $A$ and~$B$ are allowed to be empty.%
   \COMMENT{}
   For each positive integer~$k$, the set $\vS_k = \{\vs\in \vU : \ord(\vs) < k\}$ will be a submodular separable separation system, by Lemma~\ref{lem:separable}.

\section{Tangle-tree duality in graphs}\label{sec:bwd}

A \emph{tangle of order~$k$} in a finite%
   \COMMENT{}
    graph~$G = (V,E)$, as introduced by Robertson and Seymour~\cite{GMX}, is (easily seen to be equivalent to)%
   \COMMENT{}
          an orientation of~$S_k$ that avoids
 $$\T := \big\{\{(A_1,B_1), (A_2,B_2), (A_3,B_3)\}\sub \vU : G[A_1]\cup G[A_2]\cup G[A_3] = G\big\}.$$
(The three separations $(A_1,B_1), (A_2,B_2), (A_3,B_3)$ need not be distinct.) Clearly, $\T$~forces all the small separations in~$\vU$, those of the form~$(A,V)$.%
   \COMMENT{}
   Hence $\T\cap\vS_k$ is a standard subset of~$\vS_k$, for every integer~$k>0$.%
   \COMMENT{}

Notice that any $\T$-avoiding orientation $O$ of~$S_k$ is consistent, and therefore a $\T$-tangle in our sense, since for any pair of separations $(C,D)\le (A,B)$ we have $G[D]\cup G[A]\supe G[B]\cup G[A] = G$ and hence $\{(D,C),(A,B)\}\in\T$. Similarly, $O$~must contain all $(A,B)$ with $|A|<k$: it cannot contain $(B,A)$, as $(A,V)\in O$ by~$\{(V,A)\}\in\T$ but $\{(B,A),(A,V)\}\in\T$.

Since our duality theorems, so far, only work with sets $\F$ consisting of stars of separations, let us consider the set $\T^*$ of those sets in~$\T$ that are stars. 

\begin{THM}[Tangle-tree duality theorem for graphs]\label{TangleDuality}
~\\[1pt] For every $k>0$, every graph $G$%
   \COMMENT{}
   satisfies exactly one of the following assertions:
\vskip-6pt\vskip0pt
\begin{enumerate}[\rm(i)]\itemsep0pt
  \item $G$ has a $\T^*$-tangle of~$S_k$.
  \item $G$ has an ${S_k}$-tree over $\T^*$.
  \end{enumerate}
\end{THM}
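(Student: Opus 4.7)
The plan is to apply Theorem~\ref{thm:strong} in the universe $\vU$ of oriented vertex separations of~$G$, with $\vS = \vS_k$ and $\F = \T^*$. Three things need to be verified: that $\T^*$ is a set of stars (immediate from the definition of~$\T^*$), that $\T^*$ is standard for~$\vS_k$, and that $\vS_k$ is $\T^*$-separable. For standardness, recall from the excerpt that any trivial oriented separation of~$V$ is in particular small, hence of the form $(A,V)$; its inverse $(V,A)$ satisfies $G[V]=G$, so $\{(V,A)\}\in\T$ is a singleton star lying in $\T^*$, which forces~$(A,V)$.

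For $\T^*$-separability, Lemma~\ref{lem:separable} gives that $\vS_k$ is separable, so by Lemma~\ref{lem:Fsep} it suffices to show that $\T^*$ is closed under shifting in~$\vS_k$. Let $\vso\in\vS_k$ emulate some nontrivial and nondegenerate $\vr$ in~$\vS_k$, and let $\sigma\in\T^*$ with $\sigma\sub\vSr\sm\{\rv\}$ contain some $\vs_j\ge\vr$. By Lemma~\ref{lem:shifting}, the image $\sigma':=f\shift(\!\vr)(\vso)(\sigma)$ is a star. A short argument (using that $\vr$ is nontrivial, the star property of~$\sigma$, and $\rv\notin\sigma$) shows that $\vs_j$ is the unique element of~$\sigma$ with $\vs_j\ge\vr$, while every other $\vs_i\in\sigma$ has $\sv_i\ge\vr$ (since $\vs_i\in\vSr$). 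Writing $\vs_i=(A_i,B_i)$ and $\vso=(S_1,S_2)$, De~Morgan's law gives
\[
f\shift(\!\vr)(\vso)(\vs_j)=(A_j\cup S_1,\,B_j\cap S_2),\qquad f\shift(\!\vr)(\vso)(\vs_i)=(A_i\cap S_2,\,B_i\cup S_1)\ \ (i\ne j).
\]
The main obstacle is then to verify that the first coordinates of the shifted separations still cover $G$ as subgraphs, i.e.\ that $\sigma'\in\T$.

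For this final step, take any edge $uv$ of $G$ with both endpoints in some $A_i$. If $i=j$, both endpoints lie in $A_j\cup S_1$. Otherwise, because $\vso$ is a vertex separation, $G$ has no edge between $S_1\sm S_2$ and $S_2\sm S_1$, so $\{u,v\}$ either lies entirely in $S_2$ (and hence in $A_i\cap S_2$) or entirely in~$S_1$ (and hence in $A_j\cup S_1$). Vertex coverage follows by an analogous, simpler argument. Hence $\sigma'\in\T^*$, which completes the proof of closure under shifting. With all hypotheses verified, Theorem~\ref{thm:strong} yields the claimed dichotomy, and the orientation produced in case~(i) is a $\T^*$-tangle of~$S_k$ by definition of $\F$-tangle.
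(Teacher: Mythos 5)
Your proof is correct and follows essentially the same approach as the paper's: reduce to closure of $\T^*$ under shifting via Lemmas~\ref{lem:Fsep}--\ref{lem:separable}, and then verify that closure by observing that every vertex or edge of $G$ lies in $G[S_1]$ or in $G[S_2]$ for the vertex separation $\vso=(S_1,S_2)$, so shifting preserves the covering property defining $\T$. You spell out a few details the paper treats as immediate (standardness of $\T^*$, uniqueness of the element $\ge\vr$ in the star), but the core argument is the same.
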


\begin{proof}
By Theorem~\ref{thm:strong} and Lemmas~\ref{lem:Fsep}--\ref{lem:separable}, all we need to show is that $\T^*$ is closed under shifting in~$\vS = \vS_k$. This is easy from the definitions. Informally, if $(X,Y)\in\vS$ emulates some $\vr\le (X,Y)$ not forced by~$\T$%
   \COMMENT{}
   and we shift a star
  \[\sigma = \{(A_1,B_1), (A_2,B_2), (A_3,B_3)\}\sub \vSr\sm\{\rv\}\]%
 with $\vr\le (A_1,B_1)$, say, then we replace $(A_1,B_1)$ with~$(A_1\cup X,B_1\cap Y)$, and $(A_i,B_i)$ with $(A_i\cap Y,B_i\cup X)$ for $i\ge 2$. As any vertex or edge that is not in~$G[Y]$ lies in~$G[X]$, this means that ${\bigcup_i G[A_i] = G}$ remains unchanged.
\end{proof}

Our tangle-tree duality theorem can easily be extended to include the classical duality theorem of Robertson and Seymour~\cite{GMX} for tangles and branch-width. In order to do so, we first show that all $\T^*$-tangles are in fact $\T$-tangles, so these two notions coincide. Secondly, we will check that our tree structure witnesses for the non-existence of a tangle coincide with those used by Robertson and Seymour: that a graph has an $S_k$-tree over~$\T^*$ if and only if it has branch-width~${<k}$.

Using the submodularity of our order function $\{A,B\}\mapsto |A,B|$, we can easily show that $\T^*$-tangles of~$S_k$ are in fact $\T$-tangles:%
   \COMMENT{}

\begin{LEM}\label{T*toT}
Every consistent $\T^*$-avoiding orientation $O$ of~$S_k$ avoids~$\T$, as long as $|G|\ge k$.
\end{LEM}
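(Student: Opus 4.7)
The plan is to argue by contradiction. Suppose $O$ is a consistent $\T^*$-avoiding orientation of~$S_k$, yet some subset $\sigma\sub O$ lies in~$\T$. I will show this forces a subset of~$O$ to lie in~$\T^*$, contradicting the $\T^*$-avoidance of~$O$. The setup is to choose $\sigma=\{(A_1,B_1),(A_2,B_2),(A_3,B_3)\}$ so that first $|\sigma|$ is as small as possible and, subject to that, $\sum_i\ord(A_i,B_i)$ is minimum.

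The singleton case $|\sigma|=1$ is precisely where the hypothesis $|G|\ge k$ is used: if $\sigma=\{(A,B)\}$ then $G[A]=G$ forces $A=V$, and $|V|\ge k$ rules out the only possible degenerate separation in~$S_k$, namely $(V,V)$, so $B\ne V$. Hence $\{(V,B)\}$ is a nondegenerate singleton and hence a star in~$\T^*$, immediately contradicting $\T^*$-avoidance of~$O$.

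For $|\sigma|\in\{2,3\}$, I would show that $\sigma$ must already be a star. Minimality of $|\sigma|$ forces any two elements of~$\sigma$ to be incomparable, since $(A_i,B_i)\le(A_j,B_j)$ would give $G[A_i]\sub G[A_j]$, and then $\sigma\sm\{(A_i,B_i)\}$ would be a smaller subset of~$O$ still in~$\T$. Assume now that $\sigma$ is not a star: then some two distinct elements, say $(A_1,B_1)$ and $(A_2,B_2)$, fail the star condition $(A_1,B_1)\le(B_2,A_2)$ and, being incomparable, properly cross, giving $A_1\cap A_2\ne\emptyset$. Submodularity then yields $\ord((A_1,B_1)\vee(A_2,B_2))+\ord((A_1,B_1)\wedge(A_2,B_2))<2k$, so at least one of the two corners lies in~$S_k$. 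The uncrossing step I would use is to replace the crossing pair by the single join $(A_1\cup A_2,B_1\cap B_2)$: the resulting set $\sigma':=(\sigma\sm\{(A_1,B_1),(A_2,B_2)\})\cup\{(A_1\cup A_2,B_1\cap B_2)\}$ still lies in~$\T$ (because $G[A_1\cup A_2]\supseteq G[A_1]\cup G[A_2]$) and has $|\sigma'|<|\sigma|$, contradicting the minimality of~$|\sigma|$ as soon as $\sigma'\sub O$.

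The main obstacle will therefore be to justify $(A_1\cup A_2,B_1\cap B_2)\in O\cap S_k$. Consistency of~$O$ freely places the meet $(A_1\cap A_2,B_1\cup B_2)$ into~$O$ whenever it lies in~$S_k$, but gives no analogous guarantee for the join; and even when the join lies in~$S_k$, its reverse $(B_1\cap B_2,A_1\cup A_2)$ might lie in~$O$ instead. The extra leverage comes from $\T^*$-avoidance itself: for every $B\subsetneq V$ with $|B|<k$, the singleton $\{(V,B)\}$ is a star in~$\T^*$, so $O$ is already forced to contain $(B,V)$. Combining this with $(A_3,B_3)$ and a further submodular estimate -- in particular on $(A_1,B_1)\vee(A_2,B_2)\vee(A_3,B_3)=(V,B_1\cap B_2\cap B_3)$, whose singleton would again be a $\T^*$-star whenever it sits in~$S_k$ -- should either produce a star subset of~$O$ covering~$G$ outright, or yield a counterexample strictly smaller than the chosen~$\sigma$. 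Sorting out this case analysis cleanly is the delicate step; once it is done, the lemma follows.
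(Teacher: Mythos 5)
Your high-level strategy — start from an alleged $\sigma\subseteq O$ in $\T$, handle the singleton case via $|G|\ge k$, strip out comparable pairs, and then iteratively ``uncross'' until a star in $\T^*\cap 2^O$ appears — matches the paper's, and your singleton and antichain steps are fine (the case $(A_1,B_1)\ge(B_2,A_2)$ that you skip over is indeed excluded, but by consistency of $O$, which is worth saying). The gap is exactly where you flag it, and it is not a detail you can ``sort out cleanly'': your uncrossing move goes in the wrong direction. You want to replace a crossing pair $(A_1,B_1),(A_2,B_2)\in O$ by their join $(A_1\cup A_2,B_1\cap B_2)$, which is \emph{above} both. Consistency of an orientation only forces elements \emph{below} a member of $O$ into $O$ (when they are in $S_k$); it gives no grip on joins, and the $\T^*$-avoidance you invoke only yields $(B,V)\in O$ for small $B\subsetneq V$, which does not relate to $(A_1\cup A_2,B_1\cap B_2)$ under $\le$ in any way that consistency could exploit. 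So the claim $(A_1\cup A_2,B_1\cap B_2)\in O$ is simply unjustified, and nothing in your sketch closes it.

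The paper's move avoids this entirely: instead of decreasing $|\sigma|$, it replaces one crossing element $\vr=(A,B)$ by a \emph{corner} $\vrdash=(A\cap D,B\cup C)\le\vr$, where $\vs=(C,D)$ is the other crossing element. Submodularity guarantees that one of the two corners $(A\cap D,B\cup C)$, $(B\cap C,A\cup D)$ is in $\vS_k$, and whichever one it is lies below a member of $O$, hence is in $O$ by consistency. The resulting $\sigma'$ is still in $\T$ (anything of $G[A]$ lost by shrinking to $G[A\cap D]$ is in $G[C]$, and $(C,D)$ stays), $\vrdash$ no longer crosses $\vs$, and a short argument shows $\vrdash$ creates no new crossings. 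So one should iterate on the pair (cardinality, number of crossing pairs), not on (cardinality, $\sum_i\ord$): the corner replacement keeps $|\sigma|$ fixed and need not strictly decrease the sum of orders, so your secondary well-ordering would also not guarantee termination even if the step were legitimate.
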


\begin{proof}%
   \COMMENT{}
   Suppose $O$ has a subset $\sigma\in\T$. We show that as long as this set is not an inclusion-minimal nested set in~$\T$, we can either delete one of its elements, or replace it by a smaller separation in~$O$, so that the resulting set $\sigma'\sub O$ is still in~$\T$ but is smaller or contains fewer pairs of crossing separations. Iterating this process, we eventually arrive at a minimal nested set in~$\T$ that is still a subset of~$O$. By its minimality, this set is an antichain (compare the definition of~$\T$),%
   \COMMENT{}
   and all consistent nested antichains are stars.%
   \footnote{Here we use that $|G|\ge k$: otherwise $\{(V,V)\}\in\T\sm\T^*$ lies in~$O$.}%
   \COMMENT{}
   Our subset of $O$ will thus lie in~$\T^*$, contradicting our assumption that $O$ avoids~$\T^*$.

If $\sigma$ has two comparable elements, we delete the smaller one and retain a subset of $O$ in~$\T$. We now assume that $\sigma$ is an antichain, but that it contains two crossing separations, $\vr = (A,B)$ and $\vs = (C,D)$ say. As these and their inverses lie in~$\vS_k$, submodularity implies that one of the separations $(A\cap D, {B\cup C})\le (A,B)$ and $(B\cap C, A\cup D)\le (C,D)$ also lies in~$\vS_k$. Let us assume the former; the other case is analogous.

Let $\sigma'$ be obtained from $\sigma$ by replacing $\vr$ with $\vrdash := (A\cap D, B\cup C)\in\vS_k$. Then $\sigma'$ is still in~$\T$, since any vertex or edge of $G[A]$ that is not in $G[A\cap D]$ lies in~$G[C]$, and $(C,D)$ is still in~$\sigma'$. Moreover, while $\vr$ crosses~$\vs$, clearly $\vrdash$ does not. To complete the proof, we just have to show that $\vrdash$ cannot cross any separation $\vt\in\sigma'$ that was nested with~$\vr$.

If $\vr\le\vt$ or $\vr\le\tv$, then $\vrdash\le\vr$ is nested with~$\vt$, as desired. If not then $\vt\le \vr$, since $\{\vr,\vt\}\sub O$ is consistent.%
   \COMMENT{}
   This contradicts our assumption that $\sigma$ is an antichain.
\end{proof}

The following elementary lemma provides the link between our $S$-trees and \bd{}s as defined by Robertson and Seymour~\cite{GMX}%
   \ifArXiv:\else. Its elementary proof is included in the ArXiv version of this paper~\cite{TangleTreeGraphsMatroidsArXiv}.\fi

\begin{LEM}\label{bwd-Stree}
For every integer $k \ge 3$,\/%
   \footnote{See the remark after Theorem~\ref{TangleThm}.}
   a graph $G$ of order at least~$k$%
   \COMMENT{}
   has branch-width~$<k$ if and only if $G$ has an $S_k$-tree over~$\T^*$.
\end{LEM}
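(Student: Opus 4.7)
My plan is to prove the equivalence by explicit constructions in both directions, translating between the two kinds of tree-structured witnesses.

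For the forward direction ($\Rightarrow$), I start with a branch-decomposition $(T,\tau)$ of $G$ of width~$<k$ and build an $S_k$-tree $(T,\alpha)$ over~$\T^*$ on the same underlying tree. For each oriented tree edge $(x,y)\in\vec E(T)$, let $V_x$ and $V_y$ denote the sets of vertices of $G$ incident to the edges of~$G$ mapped to leaves of the two components of $T-xy$, and set $\alpha(x,y):=(V_x,V_y)$. The width hypothesis gives $|V_x\cap V_y|<k$, so each $\alpha(x,y)\in\vS_k$. At every internal degree-$3$ node~$t$, the oriented star $\vec F_t$ maps to a star of three separations $(V_{x_i},V_{t,i})$: these form a star by the natural ordering on $\vec E(T)$, and they lie in $\T$ because every edge of~$G$ appears as a leaf of exactly one branch of $T-t$, placing its endpoints in some $V_{x_i}$. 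At each leaf $\ell$ with $\tau(\ell)=uv$, I define $\alpha(y,\ell):=(V,\{u,v\})$, still in $\vS_k$ (order at most $2<k$), which yields a singleton star $\{(V,\{u,v\})\}\in\T^*$.

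For the converse direction ($\Leftarrow$), I take an $S_k$-tree $(T,\alpha)$ over~$\T^*$ and build a branch-decomposition of width~$<k$. Since stars in $\T^*$ have at most three (not necessarily distinct) elements, every node of $T$ has degree~$\le 3$. I first suppress each degree-$2$ node~$t$ with star $\{\vs_1,\vs_2\}$ by replacing $t$ with a direct edge between its two neighbors labeled by $\vs_1\vee\vs_2$, which lies in $\vS_k$ by submodularity (analogously to the proof of Lemma~\ref{lem:separable}). Next, each leaf $\ell$ of~$T$ carries a singleton $\{(A_\ell,B_\ell)\}$ with $G[A_\ell]=G$; to obtain a branch-decomposition whose leaves are bijected to~$E(G)$, I route each $uv\in E(G)$ through~$T$ by a descent that at every internal node selects the unique element $(A_i,B_i)$ of its star with $\{u,v\}\sub A_i$. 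Existence of such an $A_i$ follows from the $\T$-condition, and uniqueness from the vertex-separation property of the $\alpha$-values, which forbids an edge across a separation and hence forces $u,v$ to lie on one side only. Attaching each $uv$ as a new pendant leaf labeled $(V,\{u,v\})$ at the reached endpoint produces a cubic tree with leaves bijected to~$E(G)$; every tree edge carries a separation of order~$<k$, giving a branch-decomposition of width~$<k$.

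The main obstacle is the converse direction, specifically the two clean-up steps: suppressing degree-$2$ nodes while keeping the replacement separations in $\vS_k$, and verifying that the routing of each edge of~$G$ lands on a unique leaf. The first relies on submodularity of the order function on~$\vU$, exactly as in Lemma~\ref{lem:separable}: between $\vs_1\vee\vs_2$ and $\vs_1\wedge\vs_2$ at least one lies in $\vS_k$, and the star structure of $\{\vs_1,\vs_2\}$ enables the correct choice. The second uses the nested structure of the separations $\alpha(x,y)$ along~$T$: because each one is a vertex separation, no edge of~$G$ straddles $V_x\sm V_y$ and $V_y\sm V_x$, so for any $uv\in E(G)$ the descent is consistent and terminates at a unique leaf. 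Once these are handled, the width bound~$<k$ is inherited directly from $\alpha$ taking values in~$\vS_k$.
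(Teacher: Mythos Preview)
Your forward direction is essentially the paper's, modulo edge cases you omit: isolated vertices (then $V_x\cup V_y\ne V$, so $(V_x,V_y)$ is not even a separation of~$V$), and $|E(G)|\le 1$ (where branch-decompositions degenerate). The paper treats these explicitly and also tweaks the definition at leaf edges so that the labels there are consistent with the labels at internal edges; your ad hoc switch to $(V,\{u,v\})$ at leaves needs a similar check at the neighbouring internal node.

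The converse direction has genuine gaps. First, your suppression of degree-$2$ nodes does not work as stated. If the star at~$t$ is $\{\vs_1,\vs_2\}\in\T^*$ with $\vs_i=(A_i,B_i)$, then $A_1\cup A_2=V$, so $\vs_1\vee\vs_2=(V,B_1\cap B_2)$ has order $|B_1\cap B_2|$, which need not be~$<k$: take $G$ the disjoint union of two copies of~$K_{k-1}$ on vertex classes $A_1,A_2$ and $\vs_i=(A_i,V)$; then $|B_1\cap B_2|=|V|=2k-2$. Submodularity only guarantees that \emph{one} of $\vs_1\vee\vs_2$ and $\vs_1\wedge\vs_2$ lies in~$\vS_k$. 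Nor does simply keeping one of $\vs_1,\sv_2$ on the contracted edge work in general: at the neighbouring node you replace $G[B_2]$ by the smaller $G[A_1]$ in the $\T$-cover, which may then fail. Second, the routing of an edge $uv$ through~$T$ is not unique: if $\{u,v\}\subseteq A_i\cap B_i$ lies in the separator, then $\{u,v\}$ can lie in several~$A_j$ simultaneously, so ``the unique element with $\{u,v\}\subseteq A_i$'' need not exist. For the same reason several edges of~$G$ can reach the same leaf of~$T$, so attaching one pendant per edge does not produce a cubic tree with a bijection to~$E(G)$.

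The paper resolves all of this by an extremal argument rather than direct surgery: it defines $L(e)$ as \emph{any} sink of the orientation of~$T$ induced by~$e$ (choosing arbitrarily when both ends of~$e$ lie in a separator), then among all $S_k$-trees over~$\T^*$ together with such maps~$L$ picks one maximising the number of leaves hit by~$L$ and, subject to that, minimising~$|V(T)|$. The desired properties---$L(e)$ always a leaf, $L$ injective, every leaf used, no degree-$2$ nodes---then drop out from local modifications of~$T$ that would otherwise improve the extremal quantity. This extremal choice is the missing idea in your converse.
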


\ifArXiv
\begin{proof}
  If $\abs{E(G)}\le 1$, we can obtain an $S_k$-tree over~$\T^*$ as follows. Let $\tau$ be any maximal nested set of 0-separations. Then $G$ has an $S_1$-tree $(T,\alpha)$%
   \COMMENT{}
   whose edges are $\alpha$-labelled by~$\tau$, with $\alpha(\vec F_t)\in\T^*$ for all internal nodes~$t$ of~$T$, and such that $\alpha$ maps every edge of~$T$ at a leaf, oriented towards that leaf, to a bipartition $(A,B)$ of~$V$ with either $|B|=\{v\}$ for some $v\in V$ or else $B=\{x,y\}$, where $xy$ is the unique edge of~$G$. We can extend this to an $S_3$-tree over~$\T^*$ by adding an edge labelled by~$\{V,\{v\}\}$ at every leaf of the first type, and by adding an edge labelled by $\{V,\{x,y\}\}$ at the unique leaf of the second type if it exists.

We now assume that $|E(G)|\ge 2$. Let us prove the forward implication first. We may assume that $G$ has no isolated vertices, because we can easily add a leaf in an $S_k$-tree corresponding to an isolated vertex.%
  \COMMENT{}

  Suppose $(T,L)$ is a \bd{} of width~$<k$. For each edge $e=st$ of $T$, let $T_s$ and $T_t$ be the components of $T-e$ containing $s$ and~$t$, respectively. Let $A_{s,t}$ and $B_{s,t}$ be the sets of vertices incident with an edge in $L^{-1}(V(T_s))$ and $L^{-1}(V(T_t))$, respectively,\vadjust{\penalty-200} except that if $A_{s,t}$ contains only two vertices we always put both these also in~$B_{s,t}$ (and similarly vice versa).%
   \COMMENT{}
   Note that $B_{s,t} = A_{t,s}$.

  For all adjacent nodes $s, t\in T$ let $\alpha(s,t) := (A_{s,t},B_{s,t})$. Since $G$ has no isolated vertices these $\alpha(s,t)$ are separations, and since $(T,L)$ has width~$<k$ they lie in~$\vS_k$. For each internal node $t$ of $T$ and its three neighbours $s_1,s_2,s_3$, every edge of $G$ has both ends in one of the~$G[A_{s_i,t}]$, so
 $${\alpha(\vec F_t) =\{\alpha(s_1,t),\alpha(s_2,t),\alpha(s_3,t)\}\in \T}.$$
 As $A_{s_i,t}\subseteq A_{t,s_j} = B_{s_j,t}$ for all $i\ne j$, the set $\alpha(\vec F_t)$ is a star. For leaves $t$ of~$T$, the associated star $\alpha(\vec F_t)$ has the form $\{(V,A)\}$ with $|A|=2$,%
   \COMMENT{}
   which is in $\T^*$ since $k\ge 3$. This proves that $(T,\alpha)$ is an $S_k$-tree over~$\T^*$.

  Now let us prove the converse. We may again assume that $G$ has no isolated vertex.%
  \COMMENT{}
  Let $(T,\alpha)$ be an $S_k$-tree over~$\T^*$. For each edge $e$ of $G$, let us orient the edges $st$ of $T$ towards $t$ whenever $\alpha(s,t)=(A,B)$ is such that $B$ contains both ends of~$e$. If $e$ has its ends in~$A\cap B$, we choose an arbitrary orientation of~$st$. As $T$ has fewer edges then nodes, there exists a node $t=:L(e)$ such that every edge at~$t$ is oriented towards~$t$.  

  Let us choose an $S_k$-tree $(T,\alpha)$ and $L\colon E\to V(T)$ so that the number of leaves in $L(E)$ is maximized, and subject to this with $\abs{V(T)}$ minimum. We claim that, for every edge $e$ of~$G$, the node $t=L(e)$ is a leaf of~$T$. Indeed, if not, let us extend~$T$ to make $L(e)$ a leaf. If $t$ has degree $2$, we attach a new leaf~$t'$ to~$t$ and put $\alpha(t',t)=(V(e),V)$ and $L(e)=t'$, where $V(e)$ denotes the set of ends of~$e$.
If $t$ has degree~$3$ then, by definition of~$\T$, there is a neighbour $t'$ of $t$ such that $e\in G[A]$ for $(A,B) = \alpha(t',t)$. As $t=L(e)$, this means that $e$ has both ends in~$A\cap B$. Subdivide the edge~$tt'$, attach a leaf $t^*$ to the subdividing vertex~$t''$, put $\alpha(t',t'')=\alpha(t'',t)=(A,B)$%
   \COMMENT{}
   and $\alpha(t^*,t'')=(V(e),V)$,%
   \COMMENT{}
   and let~$L(e)=t^*$. In both cases, $(T,\alpha)$ is still an $S_k$-tree over $\T^*$.%
   \COMMENT{}

In the same way one can show that $L$ is injective. Indeed, if ${L(e') = t = L(e'')}$ for distinct $e',e''\in E$, we could increase the number of leaves in~$L(E)$ by joining two new leaves $t',t''$ to the current leaf~$t$, letting $\alpha(t',t)=(V(e'),V)$ and $\alpha(t'',t)=(V(e''),V)$, and redefine $L(e')$ as~$t'$ and $L(e'')$ as~$t''$.

By the minimality of~$\abs{V(T)}$, every leaf of $T$ is in~$L(E)$, since we could otherwise delete it. Finally, no node $t$ of $T$ has degree~2, since contracting an edge at~$t$ while keeping $\alpha$ unchanged on the remaining edges would leave an $S_k$-tree over~$\T^*$. (Here we use that $G$ has no isolated vertices, and that $L(e)\ne t$ for every edge $e$ of~$G$.)%
   \COMMENT{}

Hence $L$~is a bijection from~$E$ to the set of leaves of~$T$, and $T$ is a ternary tree. Thus, $(T,L)$ is a \bd{} of~$G$, clearly of width less than~$k$.
\end{proof}
  \fi

We can now derive, and extend, the Robertson-Seymour~\cite{GMX} duality theorem for tangles and branch-width:

\begin{THM}[Tangle-tree duality theorem for graphs, extended]\label{TangleThm}
~\\[1pt] The following assertions are equivalent for all finite graphs $G\ne\emptyset$ and~${0<k\le |G|}\!:$
\vskip-6pt\vskip0pt
\begin{enumerate}[\rm(i)]\itemsep=0pt
\item $G$ has a tangle of order~$k$.
\item $G$ has a $\T$-tangle of $S_k$.
\item $G$ has a $\T^*$-tangle of $S_k$.
\item $G$ has no $S_k$-tree over $\T^*$.
\item $G$ has branch-width at least~$k$, or $k=1$ and $G$ has no edge,%
   \COMMENT{}
   or $k = 2$ and $G$ is a disjoint union of stars and isolated vertices and has at least one edge.\looseness=-1 %
   \COMMENT{}
\end{enumerate}
\end{THM}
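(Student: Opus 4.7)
I would establish the five-way equivalence by the following short chain, invoking earlier results of this section.

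\emph{(i) $\Leftrightarrow$ (ii).} This is essentially by definition. The opening of Section~\ref{sec:bwd} identifies a classical tangle of order~$k$ with an orientation of~$S_k$ avoiding~$\T$, and the paragraph right after verifies that every such orientation is automatically consistent (for $(C,D)\le (A,B)$ the pair $\{(D,C),(A,B)\}$ lies in~$\T$, using $G[A]\cup G[B]=G$ to get $G[D]\cup G[A]=G$). So (i) and (ii) express the same property.

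\emph{(ii) $\Leftrightarrow$ (iii).} The forward direction is immediate from $\T^*\sub\T$. The converse is precisely Lemma~\ref{T*toT}, whose hypothesis $|G|\ge k$ is the theorem's standing assumption.

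\emph{(iii) $\Leftrightarrow$ (iv).} This is exactly Theorem~\ref{TangleDuality} applied at parameter~$k$.

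\emph{(iv) $\Leftrightarrow$ (v).} For $k\ge 3$ this is immediate from Lemma~\ref{bwd-Stree}: the lemma asserts that $G$ has branch-width~$<k$ iff $G$ has an $S_k$-tree over~$\T^*$, so (iv) is equivalent to branch-width~${\ge k}$, and since the two exceptional clauses of~(v) are dormant at $k\ge 3$, this is all of~(v) in that range. The main obstacle will be the two low-order cases $k\in\{1,2\}$, where Lemma~\ref{bwd-Stree} no longer applies and~(v) carries its exceptional clauses. I would handle these by a direct analysis inside the small separation systems $S_1$ and~$S_2$. For $k=1$ every separation has order~$0$, and the requirement that each leaf's singleton star lie in~$\T^*$ forces its inward orientation to be~$(V,\emptyset)$; combining this with a short orientation-counting argument on the edges of~$T$ (comparing, per node of~$T$, how many incoming $(V,\emptyset)$'s are available against how many are required by the covering condition) rules out an $S_1$-tree over~$\T^*$ exactly when (v) holds. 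For $k=2$ the admissible separations cut at most at one vertex, and an analogous but more involved case analysis of the possible $\T^*$-stars identifies the graphs admitting an $S_2$-tree over~$\T^*$ as precisely those that fail~(v)---i.e.\ those of branch-width~$<2$ that are not a disjoint union of stars and isolated vertices containing an edge.
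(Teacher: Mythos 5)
Your chain (i)$\leftrightarrow$(ii) via the definition of a tangle and its automatic consistency, (ii)$\leftrightarrow$(iii) via Lemma~\ref{T*toT}, (iii)$\leftrightarrow$(iv) via Theorem~\ref{TangleDuality}, and (iv)$\leftrightarrow$(v) via Lemma~\ref{bwd-Stree} is exactly the paper's argument for $k\ge 3$. Where you diverge is the treatment of $k\in\{1,2\}$: you propose to keep the chain through (iv) intact (correctly observing that Lemma~\ref{T*toT} and Theorem~\ref{TangleDuality} hold for all $0<k\le|G|$) and patch only the (iv)$\leftrightarrow$(v) link by a bespoke analysis of $S_1$- and $S_2$-trees over~$\T^*$; the paper instead shortcuts by declaring that for $k=1$ all five statements hold, and for $k=2$ all hold precisely when $G$ has an edge, bypassing the chain for these~$k$ entirely. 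Your variant is logically sound, but the orientation-counting argument for $k=1$ and the case analysis for $k=2$ are left as sketches, whereas the paper's direct check is quicker to complete since (i) is easy to certify (orient all low-order separations towards a fixed vertex or edge) and (v) can simply be read off, avoiding any reasoning about $\T^*$-stars at the nodes of a decomposition tree.
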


\begin{proof}
   If $k=1$, then all statements are true. If $k=2$, they are all true if $G$ has an edge, and all false if not. Assume now that $k\ge 3$.

(i)$\leftrightarrow$(ii) follows from the definition of a tangle at the start of this section, and our observation that they are consistent.%
   \COMMENT{}

(ii)$\to$(iii) is trivial; the converse is Lemma~\ref{T*toT}.

(iii)$\leftrightarrow$(iv) is an application of Theorem~\ref{thm:strong}.

(iv)$\leftrightarrow$(v) is Lemma~\ref{bwd-Stree}.
\end{proof}

The exceptions in (v) for $k\le 2$ are due to a quirk in the notion of branch-width, which results from its emphasis on separating individual edges. The branch-width of all nontrivial trees other than stars is~2, but it is 1 for stars~$K_{1,n}$. For a clean duality theorem (even one just in the context of~\cite{GMX}) it should be~2 also for stars: every graph with at least one edge has a tangle of order~$2$, because we can orient all separations in~$S_2$ towards a fixed edge. Similarly, the branch-width of a disjoint union of edges is~0, but its tangle number is~2.%
   \COMMENT{}%

\section{Tangle-tree duality for set separations:\\ rank-width, carving-width and edge-tangles in graphs; matroid tangles; clusters in data sets}\label{sec:bwdsubmodular}

The concepts of branch-width and tangles were introduced by Robertson and Seymour~\cite{GMX} not only for graphs but more generally for hypergraphs. They proved all their relevant lemmas more generally for arbitrary order functions%
   \COMMENT{}
   $(A,B)\mapsto |A,B|$ rather than just $|A,B| = |A\cap B|$. Geelen, Gerards, Robertson, and Whittle~\cite{BranchDecMatroids} applied this explicitly to the submodular connectivity function in matroids.

Our first aim in this section is to derive from Theorem~\ref{thm:strong} a duality theorem for tangles in arbitrary universes of set separations%
   \footnote{Recall that these are more general than set partitions: the two sides may overlap.}
  equipped with an order function. This will imply the above branch-width duality theorems for hypergraphs and matroids,%
   \COMMENT{}
   as well as their cousins for carving width~\cite{ratcatcher}%
   \COMMENT{}
   and rank-width of graphs~\cite{rankwidth}.%
   \COMMENT{}
   It will also yield a duality theorem for {\em edge-tangles\/}, tangles of bipartitions of the vertex set of a graph whose order is the number of edges across. We shall then recast the theorem in the language of cluster analysis to derive a duality theorem for the existence of clusters in data sets.

Recall that an oriented {\em separation\/} of a set~$V$ is a pair $(A,B)$ such that $A\cup B = V$. Often, the separations considered will be bipartitions of~$V\!$, but in general we allow $A\cap B\ne\emptyset$. We also allow $A$ and~$B$ to be empty.%
   \COMMENT{}
   Recall that {\em order functions\/} are non-negative, symmetric and submodular functions on a separation system.

Let $\vU$ be any universe of separations of a set~$V\!$%
   \COMMENT{}
   of at least two elements,%
   \COMMENT{}
   with a submodular order function%
   \COMMENT{}
   $(A,B)\mapsto \ord(A,B)$. Given $k>0$, call an orientation of
 $$S_k = \{\,\{A,B\}\in U: \ord(A,B)<k\,\}$$
   a \emph{tangle of order~$k$} if it avoids

\medskip
\centerline{$\F = \big\{
  \{(A_1,B_1),(A_2,B_2),(A_3,B_3)\}\subseteq \vS_k:  A_1\cup A_2\cup A_3 = V
\,\big\}$}\nobreak\smallskip
\rightline{$\cup\ \{\,\{(A,B)\}\subseteq \vS_k : |B|=1\,\}$.}

\medskip\noindent
 Here, $(A_1,B_1), (A_2,B_2), (A_3,B_3)$ need not be distinct. In particular, $\F$~is standard%
   \COMMENT{}
   and tangles are consistent, so the tangles of~$\vU$ are precisely its $\F$-tangles.%
   \COMMENT{}

Let $\F^*\sub\F$ be the set of stars in~$\F$.%
   \COMMENT{}
   As in the proof of Theorem~\ref{TangleDuality}, it is easy to prove that $\F^*$ is closed under shifting in every~$\vS_k$. We  also have the following analogue of Lemma~\ref{T*toT}, with the same proof:

\begin{LEM}\label{lem:F*toFsub}
  Every consistent $\F^*$-avoiding orientation of~$S_k$ avoids~$\F$, as long as $|V|\ge k$.\qed
\end{LEM}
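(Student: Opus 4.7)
The plan is to adapt the proof of Lemma~\ref{T*toT} almost verbatim, replacing the graph-covering condition $G[A_1]\cup G[A_2]\cup G[A_3]=G$ by the set-covering condition $A_1\cup A_2\cup A_3=V$. Let $O$ be a consistent $\F^*$-avoiding orientation of~$S_k$, and suppose for contradiction that some $\sigma\sub O$ lies in~$\F$. A singleton of a nondegenerate separation is vacuously a star, so the singletons $\{(A,B)\}\in\F$ with $|B|=1$ already lie in~$\F^*$ and therefore cannot be subsets of~$O$. Hence $\sigma=\{(A_1,B_1),(A_2,B_2),(A_3,B_3)\}$ (with entries not necessarily distinct) whose $A$-sides cover~$V$.

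First I would carry out the reduction step. If $\sigma$ contains comparable elements $\vr_i\le\vr_j$ with $\vr_i=(A_i,B_i)$, then $A_i\sub A_j$, so deleting $\vr_i$ (equivalently, replacing it by a repeat of $\vr_j$) yields a smaller subset of $O$ still in~$\F$. Assuming $\sigma$ is an antichain, suppose it contains a crossing pair $\vr=(A,B)$ and $\vs=(C,D)$. Since $\vr,\sv\in\vS_k$, submodularity gives
\[
\ord(\vr\wedge\sv)+\ord(\vr\vee\sv)\le \ord(\vr)+\ord(\sv)<2k,
\]
so one of $\vrdash:=(A\cap D,B\cup C)\le\vr$ and $(B\cap C,A\cup D)\le\vs$ lies in~$\vS_k$. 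Say $\vrdash\in\vS_k$; then $\vrdash\in O$ by consistency. Replace $\vr$ in $\sigma$ by~$\vrdash$ to obtain $\sigma'\sub O$.

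The key verification is that $\sigma'\in\F$: every $v\in A\sm(A\cap D)=A\sm D$ lies in~$C$ (since $C\cup D=V$), and $(C,D)$ still belongs to~$\sigma'$, so the $A$-sides of $\sigma'$ still cover~$V$. Moreover $\vrdash$ does not cross~$\vs$, and the verbatim argument from Lemma~\ref{T*toT} shows that $\vrdash$ remains nested with every $\vt\in\sigma'$ that was previously nested with~$\vr$. Iterating these operations strictly decreases first the size of $\sigma$ and then the number of crossing pairs, so the process terminates at a consistent nested antichain still contained in $O\cap\F$. The hypothesis $|V|\ge k$ excludes the degenerate separation $(V,V)$ from $\vS_k$, so this antichain consists of nondegenerate separations and hence forms a star, placing a member of $\F^*$ inside~$O$ and contradicting our assumption.

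The main obstacle, as in Lemma~\ref{T*toT}, is the bookkeeping required to guarantee simultaneously that the covering condition $A_1\cup A_2\cup A_3=V$ survives each shifting replacement and that no new crossings are introduced among the elements of~$\sigma$. Both points rest on elementary set-theoretic observations, the crucial one being $A\sm D\sub C$; once these are checked, the rest of the argument is purely formal and mirrors the graph case line by line.
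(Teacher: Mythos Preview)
Your proof is correct and follows essentially the same approach as the paper's: the paper simply states that the proof of Lemma~\ref{T*toT} carries over verbatim, and you have written out precisely that adaptation, replacing the graph-covering condition by the set-covering condition $A_1\cup A_2\cup A_3=V$ and noting that the extra singleton stars $\{(A,B)\}$ with $|B|=1$ are already in~$\F^*$. The reduction (deleting comparable elements, uncrossing via submodularity, checking that the cover and the nestedness with third elements are preserved) is identical to the paper's argument.
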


By Lemmas~\ref{lem:separable} and~\ref{lem:F*toFsub}, Theorem~\ref{thm:strong} now specializes as follows:

\begin{THM}[Tangle-tree duality theorem for set separations]\label{SetSeparations}
 ~\\[1pt] Given a universe $\vU$ of separations of a set~$V$ with a submodular order function, and~${k\le |V|}$, the following assertions are equivalent:
\vskip-3pt\vskip0pt
  \begin{enumerate}[\rm (i)]\itemsep=0pt
  \item $\vU$ has a tangle of order~$k$.%
     \COMMENT{}
  \item $\vU$ has an $\F^*$-tangle of~$S_k$.
  \item $\vU$ has no $\vS_k$-tree over~$\F^*$.\qed
  \end{enumerate}
\end{THM}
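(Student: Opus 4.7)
The plan is to deduce Theorem~\ref{SetSeparations} from the abstract duality Theorem~\ref{thm:strong}, applied to $\vS := \vS_k$ inside~$\vU$ with the star-collection~$\F^*$. The two equivalences (i)$\leftrightarrow$(ii) and (ii)$\leftrightarrow$(iii) are handled separately.

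For (i)$\leftrightarrow$(ii), recall from the paragraph just preceding the theorem that the tangles of order~$k$ are precisely the $\F$-tangles of~$S_k$. Since $\F^* \subseteq \F$, every $\F$-tangle is also an $\F^*$-tangle, giving (i)$\to$(ii); the converse direction is exactly Lemma~\ref{lem:F*toFsub}, whose hypothesis $|V| \ge k$ is part of the assumptions of the theorem.

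For (ii)$\leftrightarrow$(iii), I would verify the three hypotheses of Theorem~\ref{thm:strong} for $\vS_k$ and~$\F^*$, then invoke it. Members of $\F^*$ are stars by definition. For standardness, a separation $\vr = (A,B) \in \vS_k$ trivial in~$\vS_k$ must have $B = V$, since $\vr < \vs$ and $\vr < \sv$ for some $s = \{C,D\}$ force $B \supseteq C \cup D = V$; moreover $A \ne V$, since $(V,V)$ has nothing strictly above it in~$\vU$ and hence is not trivial. Thus $\rv = (V,A)$ is nondegenerate, $\{\rv\}$ is a star, and the `triple' $(V,A),(V,A),(V,A)$ (allowed to coincide, by definition of~$\F$) covers~$V$, placing $\{\rv\} \in \F \cap \F^*$ and forcing~$\vr$. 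For $\F^*$-separability, Lemma~\ref{lem:separable} yields separability of~$\vS_k$, and $\F^*$ is closed under shifting in~$\vS_k$ by the same argument as in the proof of Theorem~\ref{TangleDuality} (as remarked in the paragraph before the statement); Lemma~\ref{lem:Fsep} then gives $\F^*$-separability, and Theorem~\ref{thm:strong} concludes by asserting that exactly one of (ii) and~(iii) holds.

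The one place requiring a little care is the shift-closure of~$\F^*$. If $\vso = (X,Y) \in \vS_k$ emulates some $\vr \le \vso$ and $\sigma = \{(A_1,B_1),(A_2,B_2),(A_3,B_3)\} \in \F^*$ is a covering triple with, say, $(A_1,B_1) \ge \vr$, then the shift replaces $(A_1,B_1)$ by $(A_1 \cup X, B_1 \cap Y)$ and each $(A_i,B_i)$ with $i \ge 2$ by $(A_i \cap Y, B_i \cup X)$; since $X \cup Y = V$, any point lost from some $A_i$ by intersecting with~$Y$ lies in $X \subseteq A_1 \cup X$, so the covering condition survives. A shifted singleton $\{(A,B)\} \in \F^*$ with $|B| = 1$ could acquire an empty right side, but then its image covers~$V$ via the collapsed triple clause and so still lies in~$\F^*$. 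Lemma~\ref{lem:shifting} ensures that the image is still a star in each case.
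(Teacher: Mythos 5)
Your argument is correct and follows the paper's own route: (i)$\leftrightarrow$(ii) via Lemma~\ref{lem:F*toFsub} (the analogue of Lemma~\ref{T*toT}), and (ii)$\leftrightarrow$(iii) by invoking Theorem~\ref{thm:strong} after checking that $\F^*$ is a standard set of stars and deriving $\F^*$-separability from Lemmas~\ref{lem:separable} and~\ref{lem:Fsep} together with the same shift-closure argument used for Theorem~\ref{TangleDuality}. One small inaccuracy in your standardness check: $(V,V)$ may well have separations strictly above it in $\vU$ (e.g.\ $(V,D)$ with $D\subsetneq V$, which arises as $\vs\vee\sv$ for any $\vs=(A,B)$ with $A,B\subsetneq V$); the correct reason $(V,V)$ is not trivial is that no single $s=\{C,D\}$ can satisfy both $(V,V)<(C,D)$ and $(V,V)<(D,C)$, since the first forces $C=V$ and the second $D=V$, collapsing $s$ to the degenerate separation and making both inequalities non-strict.
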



Applying Theorem~\ref{SetSeparations} with the appropriate order functions yields duality theorems for all known width parameters based on set separations. For example, let $V$ be the vertex set of a graph~$G$, with bipartitions as separations. Counting the edges across a bipartition defines an order function%
   \COMMENT{}
   whose $\F$-tangles are known as the {\em edge-tangles\/} of~$G$,%
   \COMMENT{}
   so Theorem~\ref{SetSeparations} yields a duality theorem for these. See Liu~\cite{LiuEdgeTangles} for more on edge-tangles, as well as their applications to immersion problems.%
   \COMMENT{}

The duals to edge-tangles of order~$k$ are $S_k$-trees over~$\F^*$. These were introduced by Seymour and Thomas~\cite{ratcatcher} as {\em carvings\/}. The least~$k$ such that $G$ has a carving is its {\em carving-width\/}. We thus have a duality theorem between edge-tangles and carving-width. 

Taking as the order of a vertex bipartition the rank of the adjacency matrix of the bipartite graph that this partition induces (which is submodular~\cite{rankwidth})%
   \COMB{source for submodularity cited (Referee~1)}
   gives rise to a width parameter called rank-width. In our terminology, $G$~has {\em rank-width\/}~$<k$ if and only if it admits an $S_k$-tree over~$\F^*$. The corresponding $\F$-tangles%
   \COMMENT{}
   of~$S_k$, then, are necessary and sufficient witnesses for having rank-width~$\ge k$, and we have a duality theorem for rank-width.

If $V$ is the vertex set of a hypergraph or the ground set of a matroid, the $\F$-tangles coincide, just as for graphs, with the hypergraph tangles of~\cite{GMX} or the matroid tangles of~\cite{BranchDecMatroids}. As in the proof of Lemma~\ref{bwd-Stree}, a hypergraph or matroid has branch-width $<k$ if and only if it has an $S_k$-tree over~$\F^*$. Theorem~\ref{SetSeparations} thus yields the original duality theorems of \cite{GMX} and~\cite{BranchDecMatroids} in this case.

\medbreak

  Our tangle-tree duality theorem for set separations can also be applied in contexts quite different from graphs and matroids. As soon as a set comes with a natural type of set%
   \COMMENT{}
   separation~-- for example, bipartitions~-- and a (submodular) order function on these, it is natural to think of the tangles in this separation  universe as clusters in that set. Theorem~\ref{SetSeparations} then applies to these clusters: if there is no cluster of some given order, then this is witnessed by a nested set of separations which cut the given set, recursively,%
   \COMMENT{}
   into small pieces.

The interpretation is that the separations to be oriented have small enough order that they cannot cut right through a cluster.%
   \COMMENT{}
   So if there exists a cluster, it can be thought of%
   \COMB{rephrased (Referee~1)}
   as orienting all these separations towards it. If not, the nested subset of the separations returned by the theorem divides the ground set into pieces too small to accommodate a cluster. This tree set of separations, therefore, will be an easily checkable witness for the non-existence of a cluster.

This approach to clusters has an important advantage over more traditional ways of identifying clusters: real-world clusters tend to be fuzzy, and tangles can capture them despite their fuzziness. For example, consider a large grid in a graph. For every low-order separation, most of the grid will lie on the same side, so the grid `orients' that separation towards this side. But every single vertex will lie on the `wrong' side for {\em some\/} low-order separation, the side not containing most of the grid; for example, it may be separated off by its four neighbours. The grid, therefore, defines a unique $k$-tangle for some large~$k$, but the `location' of this tangle is not represented correctly by any one of its vertices~-- just as for a fuzzy cluster in a data set it may be impossible to say which data exactly belong to that cluster and which do not.

Even if we base our cluster analysis just on bipartitions, we still need to define an order function to make this work. This will depend both on the type of data that our set represents and on the envisaged type of clustering. In~\cite{MonaLisa} there are some examples of how this might be done for a set of pixels of an image, where the clusters to be captured are the natural regions of this image such as a nose, or a cheek, in a portrait of the Mona Lisa. The corresponding duality theorem then reads as follows:

\begin{COR}\label{TDapplied} {\rm\cite{MonaLisa}}
For every picture $\pi$ on a canvas and every integer~$k>0$, either $\pi$ has a non-trivial region of coherence at least~$k$,%
   \COMMENT{}
   or there exists a laminar set of lines of order~$<k$ all whose splitting stars are void 3-stars%
   \COMMENT{}
    or single pixels. For no picture do both of these happen at once.
\end{COR}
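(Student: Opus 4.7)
The plan is to recognize Corollary~\ref{TDapplied} as a direct specialization of Theorem~\ref{SetSeparations}, so that essentially the only content of the proof is translating the image-analytic vocabulary (``canvas'', ``line'', ``region of coherence'', ``splitting star'') into the language of set separations and their $\F$-tangles. I would assume, as in~\cite{MonaLisa}, that the canvas is a finite set $V$ of pixels, that every line gives a bipartition $\{A,B\}$ of~$V$, and that the picture $\pi$ induces a submodular order function $(A,B)\mapsto\ord(A,B)$ on these bipartitions (measuring how much image-structure the line breaks). With $k\le |V|$ this puts us in the setting of Theorem~\ref{SetSeparations}, applied to the universe $\vU$ of bipartitions of~$V$ and to the standard set $\F$ of stars from Section~\ref{sec:bwdsubmodular}.

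Next I would identify a \emph{non-trivial region of coherence at least~$k$} with a tangle of order~$k$ of~$\vU$. Such a region orients every line of order $<k$ towards the side it predominantly lies in, it is consistent (it cannot simultaneously point away from one line and towards a larger one), and it avoids~$\F$ for two reasons: a coherent region cannot be contained in all three small sides of a triple $(A_i,B_i)$ with $A_1\cup A_2\cup A_3=V$, and non-triviality of the region excludes the singleton ``single-pixel'' stars $\{(A,B)\}$ with $|B|=1$. Conversely any tangle of order $k$ in this universe is, by definition, such a region. Thus condition~(i) of Theorem~\ref{SetSeparations} is exactly the first alternative of the corollary.

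For the second alternative I would check that laminar families of lines of order $<k$ are precisely the separation-sets arising from $S_k$-trees: a set of bipartitions of~$V$ is laminar iff it is nested, and its natural tree structure is the tree~$T$ with $\alpha$-labels giving the corresponding $S_k$-tree. The ``splitting star'' of this family at an internal node $t$ of~$T$ is exactly the oriented star $\alpha(\vec F_t)$. The condition that every splitting star is either a ``void 3-star'' or a ``single pixel'' then matches $\alpha(\vec F_t)\in\F^*$: a void 3-star is a triple $\{(A_1,B_1),(A_2,B_2),(A_3,B_3)\}$ whose big sides have empty common part, equivalently $A_1\cup A_2\cup A_3=V$, and a leaf-star whose unique separation points at a single pixel is precisely a star $\{(A,B)\}$ with $|B|=1$. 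So the second alternative is condition~(iii) of Theorem~\ref{SetSeparations}.

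Putting this together, the corollary is just the equivalence (i)$\Leftrightarrow$(iii) of Theorem~\ref{SetSeparations} plus the mutual exclusion ``not both'' asserted there (any $S_k$-tree over $\F^*$ would force every orientation of $S_k$ to contain a node-star from~$\F$, contradicting its being a tangle). The main obstacle is therefore not mathematical but definitional: one has to pin down the combinatorial model of a picture tightly enough that ``region of coherence'' really is the $\F$-tangle for that model and ``splitting star'' really is $\vec F_t$. Once that dictionary from~\cite{MonaLisa} is in place, no further argument is needed, since submodularity of the order function and the standardness of~$\F$ (both already established in Section~\ref{sec:bwdsubmodular}) are exactly the hypotheses of Theorem~\ref{SetSeparations}.
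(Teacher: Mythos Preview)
Your proposal is correct and matches the paper's treatment exactly: the paper gives no proof of this corollary at all, merely stating it (with a citation to~\cite{MonaLisa}) as the instance of Theorem~\ref{SetSeparations} obtained by reading the pixel-bipartition model and its submodular order function from~\cite{MonaLisa}. Your dictionary---tangle of order~$k$ $\leftrightarrow$ non-trivial region of coherence~$\ge k$, $S_k$-tree over~$\F^*$ $\leftrightarrow$ laminar set of lines with the stated splitting stars---is precisely the intended translation, and nothing beyond Theorem~\ref{SetSeparations} is needed.
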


\section{Tangle duality for tree-width in graphs}\label{sec:twd}

We now apply our abstract duality theorem to obtain a new duality theorem for tree-width in graphs. Its witnesses for large tree-width will be orientations of~$S_k$, like tangles, and thus different from brambles (or `screens'), the dual objects in the  classical tree-width duality theorem of Seymour and Thomas~\cite{ST1993GraphSearching}. 

This latter theorem, which ours easily implies, says that a finite graph either has a \td\ of width less than~$k-1$ or a bramble of order at least~$k$, but not both. The original proof of this theorem is as mysterious as the result is beautiful. The shortest known proof is given in~\cite{DiestelBook10noEE} (where we refer the reader also for definitions), but it is hardly less mysterious. A~more natural, if slightly longer, proof due to Mazoit is presented in~\cite{DiestelBook16}. The proof via our abstract duality theorem, as outlined below, is perhaps not shorter all told, but it seems to be the simplest available so far.

Given a finite graph $G = (V,E)$, we consider its separation universe~$\vU$ and the submodular separation systems~$\vS_k\sub\vU$ as defined at the end of Section~\ref{sec:AbstractDuality}.%
   \COMMENT{}
   For every integer $k>0$ let
 $$\textstyle \F_k := \big\{\, \sigma\sub \vU \mid \sigma = \{(A_i,B_i):i=0,\ldots,n\} \text{ is a star with } \big|\bigcap_{i=0}^n B_i\big|< k\,\big\}.$$
  (We take $\bigcap_{i=0}^n B_i := V$ if $\sigma=\emptyset$, so $K_1$ is an $S_k$-tree over~$\F_k$ if $|G|<k$.)%
   \COMMENT{}

Since $\F_k$ forces all the small nondegenerate separations in~$\vS_k$, the separations $(A,V)\in\vS_k$ with $A\ne V\!$,%
   \COMMENT{}
   it is standard for every~$S_k$. We have also seen that $\vS_k$ is separable (Lemma~\ref{lem:separable}). To apply Theorem~\ref{thm:strong} we thus only need the following lemma (cf.~Lemma~\ref{lem:Fsep})~-- whose proof contains the only bit of magic now left in tree-width duality:

\begin{LEM}\label{lem:twd-shiftable}
 For every integer $k>0$, the set $\F_k$ is closed under shifting in~$S_k$.
\end{LEM}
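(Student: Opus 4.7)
The plan is to reduce the bound on the shifted bag to a single application of the emulation hypothesis, by choosing an auxiliary separation $\vs\ge\vr$ in $S_k$ whose boundary encodes the bag $V_t:=\bigcap_{i=0}^n B_i$ of the star $\sigma$, and whose shift under $\vso=(X,Y)$ then has boundary equal to the bag $V_t'$ of the shifted star $\sigma'$. First I would fix $\sigma=\{(A_i,B_i):0\le i\le n\}\in\F_k$, note that because $\vr$ is nontrivial and nondegenerate exactly one element is $\ge\vr$ (call it $(A_0,B_0)$), and write down the shifted star
\[
\sigma'=\{(A_0\cup X,B_0\cap Y)\}\cup\{(A_i\cap Y,B_i\cup X):i\ge1\},
\]
which is a star by Lemma~\ref{lem:shifting}; the task is then to show $|\bigcap_i B_i'|<k$.

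The crucial observation, which I would then prove from the star condition $A_0\subseteq B_i$ $(i\ge1)$, is that $A_0\cap B_0\subseteq B_*$ and hence $A_0\cap B_0\subseteq V_t$, where $B_*:=\bigcap_{i\ge1}B_i$. Taking this for granted, I would introduce the auxiliary separation
\[
\vs:=(A_0\cup V_t,\,B_0)
\]
and check three things quickly: (a) $\vs$ is a separation (since $V_t\subseteq B_0$, we have $(A_0\cup V_t)\cup B_0=A_0\cup B_0=V$, and for graphs the edge condition is inherited from $(A_0,B_0)$); (b) its order is exactly $|(A_0\cap B_0)\cup V_t|=|V_t|<k$, so $\vs\in\vS_k$; (c) $\vr\le\vs$, because $A\subseteq A_0\subseteq A_0\cup V_t$ and $B\supseteq B_0$.

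Now I would invoke the emulation hypothesis on $\vs$: $\vs\vee\vso=(A_0\cup V_t\cup X,\,B_0\cap Y)\in\vS_k$, so its order is less than $k$. A routine distributivity calculation gives
\[
\bigcap_{i=0}^n B_i'=(B_0\cap Y)\cap(B_*\cup X)=(V_t\cap Y)\cup(B_0\cap X\cap Y),
\]
and a matching expansion of $(A_0\cup V_t\cup X)\cap(B_0\cap Y)$, using $V_t\subseteq B_0$ together with the absorption $A_0\cap B_0\cap Y\subseteq V_t\cap Y$, shows that the order of $\vs\vee\vso$ is precisely $|(V_t\cap Y)\cup(B_0\cap X\cap Y)|$. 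Thus $|\bigcap_i B_i'|=\ord(\vs\vee\vso)<k$, so $\sigma'\in\F_k$, completing the proof. The only real obstacle here is finding the right $\vs$: the trick is to inflate $A_0$ by $V_t$ so that the boundary of $\vs$ collapses to $V_t$ itself and, after shifting, collapses further to the shifted bag---after that, everything else is bookkeeping.
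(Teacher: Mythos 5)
Your proof is correct and is essentially the paper's argument. The one apparent difference is cosmetic: you define the auxiliary separation as $\vs=(A_0\cup V_t,\,B_0)$ with $V_t=\bigcap_{i=0}^n B_i$, whereas the paper uses $(B^*,B_0)$ with $B^*=\bigcap_{i\ge1}B_i$; but since $A_0\subseteq B^*$ (star condition) and $A_0\cup B_0=V$, one has $A_0\cup V_t=A_0\cup(B^*\cap B_0)=B^*$, so these are literally the same separation, and the subsequent shift and bookkeeping match the paper's.
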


\begin{proof}
  Consider a separation $\vso=(X,Y)\in\vS_k =: \vS$ that emulates, in~$\vS$, some nontrivial and nondegenerate $\vr\in\vS$ not forced by~$\F_k$. Let
 $$\sigma = \big\{(A_i,B_i):i=0,\ldots,n\big\}\sub {\vSr}\sm\{\rv\}$$
 be a star in~$\F_k$ with $\vr\le (A_0,B_0)$.%
   \COMMENT{}
   Then
\begin{equation}\label{eq:S1}
   \vr\le (A_0,B_0)\le (B_i,A_i) \text{ for all }i\ge 1.
\end{equation}
  We have to show that
 $$\sigma' = \big\{(A'_i,B'_i): i=0,\dots,n \big\}\in \F_k$$
  for $(A'_i,B'_i) := f\shift(\!\vr)(\vso) (A_i,B_i)$.


From Lemma~\ref{lem:shifting} we know that $\sigma'$ is a star.%
   \COMMENT{}
   Since $(X,Y)$ emulates $\vr$ in~$\vS$,%
   \COMMENT{}
   we have $\sigma'\sub \vS$ by~\eqref{eq:S1}.%
   \COMMENT{}
   It remains to show that $\big|\bigcap_{i=0}^n B'_i\big| < k$. The trick will be to rewrite this intersection as the intersection of the two sides of a suitable separation that we know to lie in~$S=S_k$.%
   \COMMENT{}

By~\eqref{eq:S1} we have $(A'_0,B'_0) = (A_0\cup X,B_0\cap Y)$, while $(A'_i,B'_i) = (A_i\cap Y,B_i\cup X)$ for $i\ge 1$.
  Since the $(A_i,B_i)$ are separations, i.e.\ in~$\vU$, 
  so is $\big(\bigcap_{i=1}^n  B_i,\bigcup_{i=1}^n A_i\big)$.
  As trivially $(V,B_0)\in\vU$, this implies that, for $B^*:= \bigcap_{i=1}^n B_i$, also%
    \COMMENT{}
  $$\Big(\bigcap_{i=1}^n B_i \cap V\,,\, \bigcup_{i=1}^n A_i\cup B_0\Big) \specrel={\eqref{eq:S1}} (B^*,B_0) \in\,\vU\,.$$
Since $\sigma\in\F_k$ we have $\abs{B^*,B_0} = \big|\bigcap_{i=0}^n B_i\big| < k$, so $(B^*,B_0)\in \vS_k = \vS$ (Fig.~\ref{fig:shiftingTDs}).%
   \footnote{The~$A_i$, of course, are `more disjoint' than they appear in the figure.}
 As also $\vr\le (B^*,B_0)$ by~\eqref{eq:S1},\COMB{RHS of Figure~4 edited (Referee~3)}
   \begin{figure}[htpb]
     \centering
   	  \includegraphics{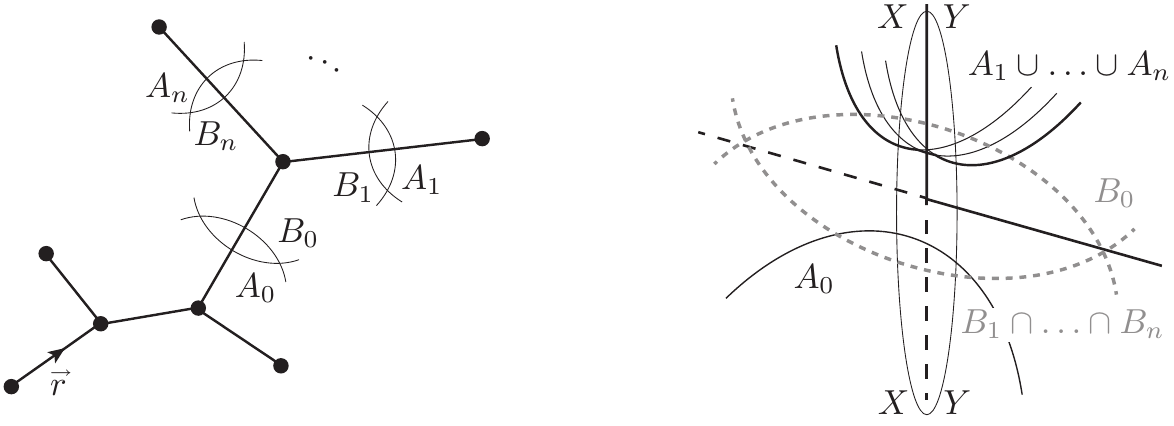}
   	  \caption{Shifting the separation $(B^*,B_0)$}
      \label{fig:shiftingTDs}
    \end{figure}%
   \COMMENT{}
   the fact that $(X,Y)$ emulates $\vr$ in~$\vS$%
   \COMMENT{}
  therefore implies that $(B^*\cup X, B_0\cap Y)\in \vS=\vS_k$. But then
 $$\Big|\bigcap_{i=0}^n B'_i\Big| = \Big|(B_0\cap Y) \cap \bigcap_{i=1}^n (B_i\cup X)\Big| = \big| B_0\cap Y, B^*\cup X\big| < k\,, $$
which means that $\sigma'\in\F_k$.
   \end{proof}

\begin{THM}[Tangle-treewidth duality theorem for graphs]\label{TangleTWDuality}
~\\[1pt] For every $k > 0$, every graph $G$%
   \COMMENT{}
   satisfies exactly one of the following assertions:
\vskip-6pt\vskip0pt
\begin{enumerate}[\rm(i)]\itemsep0pt
  \item $G$ has an $\F_k$-tangle of~$S_k$.
  \item $G$ has an $S_k$-tree over~$\F_k$.
  \end{enumerate}
\end{THM}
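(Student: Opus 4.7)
The proof is essentially an application of the abstract duality theorem, Theorem~\ref{thm:strong}, with the ingredients now all in place. The plan is to verify the hypotheses of Theorem~\ref{thm:strong} for the universe~$\vU$ of vertex separations of~$G$, the separation system~$\vS_k \subseteq \vU$, and the family~$\F_k \subseteq 2^{\vU}$ defined just before Lemma~\ref{lem:twd-shiftable}.

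First, I would note that $\F_k$ is by definition a set of stars, so Theorem~\ref{thm:strong} is formally applicable once we check the remaining hypotheses. Next I would point out that $\F_k$ is standard for~$\vS_k$: as observed in the paragraph preceding Lemma~\ref{lem:twd-shiftable}, $\F_k$ forces all small nondegenerate separations $(A,V)\in\vS_k$ with~$A\ne V$, since the singleton star $\{(A,V)\}$ satisfies $|V\cap \bigcap B_i| = |V| \ge 0$... wait, more carefully: the singleton $\{(V,A)\}$ with $A\ne V$ has $\bigcap B_i = A$, and the text's convention makes such singletons lie in~$\F_k$ precisely when $|A|<k$; in particular the trivial orientations $\vs\in\vS_k$ (all of which are small) are forced, which is what `standard' requires.

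Second, I would combine the two lemmas we already have: Lemma~\ref{lem:separable} shows that $\vS_k$ is separable, and Lemma~\ref{lem:twd-shiftable} shows that $\F_k$ is closed under shifting in~$\vS_k$. By Lemma~\ref{lem:Fsep}, these together yield that $\vS_k$ is $\F_k$-separable.

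Finally, I would invoke Theorem~\ref{thm:strong}: under the verified hypotheses, exactly one of (i)~there is an $\F_k$-tangle of~$S_k$, or (ii)~there is an $S_k$-tree over~$\F_k$, holds for~$G$. There are no other obstacles: the only real work, the verification that the `forbidden' stars behave correctly under the shifting maps~$f\shift(\!\vr)(\vso)$, was already carried out in Lemma~\ref{lem:twd-shiftable}, so the proof of Theorem~\ref{TangleTWDuality} itself is a one-line citation. The main conceptual obstacle, namely the identification of the right $\F_k$ and the proof that it is shift-closed (the ``bit of magic'' rewriting $\bigcap_{i=0}^n B'_i$ as the two sides of the single separation $(B^*\cup X, B_0 \cap Y)$), has already been surmounted.
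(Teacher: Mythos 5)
Your proposal is correct and matches the paper's own proof, which is exactly the one-line citation ``Apply Theorem~\ref{thm:strong} and Lemmas~\ref{lem:Fsep}, \ref{lem:separable} and~\ref{lem:twd-shiftable}.'' Your brief wobble over whether the singleton star is $\{(A,V)\}$ or $\{(V,A)\}$ resolves correctly: $\F_k$ forces $(A,V)$ because $\{(V,A)\}\in\F_k$ when $|A|<k$, which holds whenever $(A,V)\in\vS_k$.
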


\begin{proof}
   Apply Theorem~\ref{thm:strong} and Lemmas~\ref{lem:Fsep}, \ref{lem:separable} and~\ref{lem:twd-shiftable}.
\end{proof}

\goodbreak

Condition~(ii) above can be expressed in terms of the tree-width ${\rm tw}(G)$ of~$G$:

\begin{LEM}\label{lem:twd-tree}
 A~graph $G$ has an $S_k$-tree over~$\F_k$ if and only if ${\rm tw}(G) < k-1$. More precisely, $G$~has an $S_k$-tree $(T,\alpha)$ over~$\F_k$ if and only if it admits a \td\ $(T,\V)$ of width~$< k-1$.%
   \COMMENT{}
 \end{LEM}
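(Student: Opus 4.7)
\emph{Plan.} I plan to prove the lemma by exhibiting an explicit correspondence between $S_k$-trees $(T,\alpha)$ over~$\F_k$ and \td s $(T,\V)$ of~$G$ of width~$<k-1$, keeping the same underlying tree~$T$ in both structures.

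For the forward direction (``\td\ $\to$ $S_k$-tree''), given $(T,(V_t)_{t\in V(T)})$ of width~$<k-1$, I~define for each oriented edge $(s,t)$ of~$T$ the separation $\alpha(s,t) := (A_{s,t},B_{s,t})$, where $A_{s,t}$ (resp.~$B_{s,t}$) is the union of the bags $V_u$ over those~$u$ in the component of $T-st$ containing~$s$ (resp.~$t$). The standard ``separation property'' of \td s gives that each $(A_{s,t},B_{s,t})$ is a vertex separation with $A_{s,t}\cap B_{s,t} = V_s\cap V_t$ of order at most width~$+1<k$, so $\alpha(s,t)\in\vS_k$. At each node~$t$, the separations in $\alpha(\vec F_t)$ all point towards~$t$ and hence form a star, and the subtree axiom of \td s yields $\bigcap_{s\sim t}B_{s,t} = V_t$ of size~$<k$, so the star lies in~$\F_k$.

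For the converse, given an $S_k$-tree $(T,\alpha)$ over~$\F_k$, I~set $V_t := \bigcap_{s\sim t} B_{s,t}$ (and $V_t := V$ if $t$ is isolated in~$T$). Since $\alpha(\vec F_t)\in\F_k$, we immediately have $|V_t|<k$, so the width is~$<k-1$. It remains to verify the \td\ axioms. For axioms (T1) and (T2), the idea is to orient every edge of~$T$ so that a given vertex~$v$ (resp.\ both endpoints of a given edge $uv\in E(G)$) lies on the ``$B$-side''; a sink of this orientation of~$T$, which every finite tree has, then yields a bag containing~$v$ (resp.\ both $u$ and~$v$). For (T2) the key point is that no edge of~$G$ can strictly cross a vertex separation of~$G$.

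The main obstacle will be the subtree axiom (T3): that $T_v := \{t : v\in V_t\}$ is connected in~$T$ for every $v\in V$. Here my plan is to exploit that for an $S$-tree over stars, $\alpha$~preserves the natural partial order on~$\vec E(T)$ (Figure~\ref{fig:Stree}); along any path $s_0,\dots,s_n$ in~$T$ this gives an ascending chain $\alpha(s_0,s_1)\le\alpha(s_1,s_2)\le\cdots\le\alpha(s_{n-1},s_n)$ in~$\vU$. From $v\in V_{s_0}$ one extracts $v\in B_{s_1,s_0}=A_{s_0,s_1}$, and from $v\in V_{s_n}$ one extracts $v\in B_{s_{n-1},s_n}$; the chain then places~$v$ in both $A_{s_{i-1},s_i}$ and $B_{s_{i-1},s_i}$ for every intermediate~$i$. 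A short argument using the star property at~$s_i$ (which supplies $B_{r,s_i}\supseteq A_{s_{i-1},s_i}\ni v$ for any neighbour~$r$ of~$s_i$ off the path) then handles the remaining edges at~$s_i$, yielding $v\in V_{s_i}$ for every~$i$ and completing the proof of~(T3).
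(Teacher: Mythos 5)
Your proof is correct and follows essentially the same route as the paper: the same correspondence between $S_k$-trees and tree-decompositions, using the same constructions (bags $V_t = \bigcap_{s\sim t} B_{s,t}$ in one direction, and $\alpha(t_1,t_2) = (U_1,U_2)$ with $U_i$ the union of bags over the component of $T-t_1t_2$ containing~$t_i$ in the other). The only difference is that the paper delegates the routine verifications of the tree-decomposition axioms and of the identity $U_1\cap U_2 = V_{t_1}\cap V_{t_2}$ to references, whereas you sketch them explicitly; your sketch of the subtree axiom~(T3) via the order-preserving property of~$\alpha$ on $\vec E(T)$ and the star property at intermediate nodes is sound.
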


\begin{proof}
Given any $S$-tree $(T,\alpha)$ of~$G$ over a set $\F$ of stars, let $\V = (V_t)_{t\in T}$ be defined by letting
\begin{equation}\label{Vt}
 V_t = \bigcap\big\{\, B : (A,B) = \alpha(s,t),\ st\in E(T)\big\}.
\end{equation}
It is easy to check~\cite{confing} that $(T,\V)$ is a \td\ of~$G$ with adhesion sets $V_t\cap V_{t'} = A\cap B$ whenever $(A,B) = \alpha(t,t')$. If $S=S_k$ and $\F=\F_k$, we have $|V_t| < k$ at all $t\in T$,%
   \COMMENT{}
   so $(T,\V)$ has width less than~$k-1$.

Conversely, given a \td\ $(T,\V)$ with $\V = (V_t)_{t\in T}$, say, define $\alpha\colon\vec E(T)\to \vS_k$ as follows. Given $t_1 t_2\in E(T)$, let $T_i$ be the component of $T-t_1 t_2$ containing~$t_i$, and put $U_i := \bigcup_{t\in V(T_i)} V_t$ ($i=1,2$). Then let $\alpha(t_1, t_2):= (U_1, U_2)$. One easily checks~\cite{DiestelBook16} that $U_1\cap U_2 = V_{t_1}\cap V_{t_2}$, so $\alpha$ takes its values in~$\vS_k$ if $(T,\V)$ has width~$<k-1$. Moreover, every part $V_t$ satisfies~\eqref{Vt},%
   \COMMENT{}
   so if $(T,\V)$ has width $<k-1$ then $(T,\alpha)$ is over~$\F_k$.
\end{proof}

If desired, we can derive from Theorem~\ref{TangleTWDuality} the tree-width duality theorem of Seymour and Thomas~\cite{ST1993GraphSearching}. This is cast in terms of brambles, or `screens', as they originally called them. (See~\cite{DiestelBook16} for a definition and some background.)

Brambles have an interesting history. After Robertson and Seymour had invented tangles, they looked for a tangle-like type of highly cohesive substructure, or~HCS, dual to low tree-width. Their plan was that this should be a~map $\beta$ assigning to every set $X$ of fewer than $k$ vertices one component of~$G-X$. The question, in our language, was how to make these choices consistent: so that they would define an abstract~HCS.

The obvious consistency requirement, that $\beta(Y)\sub \beta(X)$ whenever $X\sub Y$, is easily seen to be too weak.%
   \COMMENT{}
   Yet asking that $\beta(X)\cap \beta(Y)\ne\emptyset$ for all~$X,Y$ turned out to be too strong. In~\cite{ST1993GraphSearching}, Seymour and Thomas then found a requirement that worked: that any two such sets, $\beta(X)$ and~$\beta(Y)$, should \emph{touch}: that either they share a vertex or $G$ has an edge between them. Such maps $\beta$ are now called \emph{havens}, and it is easy to show that $G$ admits a haven of \emph{order~$k$} (one defined on all sets $X$ of fewer than $k$ vertices) if and only if $G$ has a bramble of order at least~$k$.

\begin{LEM}\label{BrambleTranslation}
  $G$ has a bramble of order at least $k$ if and only if $G$ has an $\F_k$-tangle of~$S_k$.
\end{LEM}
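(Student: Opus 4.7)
My plan is to give direct constructions in each direction.

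For $(\Rightarrow)$, given a bramble $\B$ of order $\ge k$, I define an orientation $O$ of $S_k$ by putting $(A,B)\in O$ iff some element of $\B$ disjoint from $A\cap B$ is contained in~$B$. Since $|A\cap B|<k$, some $U\in\B$ avoids $A\cap B$; being connected and not crossed by $(A,B)$, it lies entirely in $A\sm B$ or in $B\sm A$, and the pairwise touching of $\B$ forces all such elements onto the same side, so $O$ is well defined. Consistency is immediate from touching: if $(A,B)<(C,D)$ were distinct with $(B,A),(C,D)\in O$, witness bramble elements in $A\sm B$ and in $D\sm C\sub B\sm A$ could not touch across $(A,B)$. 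For $\F_k$-avoidance, suppose a star $\sigma=\{(A_i,B_i):i=0,\ldots,n\}\sub O$ has $|\bigcap_iB_i|<k$; then some $U\in\B$ avoids $\bigcap_iB_i$ and therefore lies in $\bigcup_i(A_i\sm B_i)$. Since $A_j\sub B_i$ for $j\ne i$ by the star property, any edge from $A_i\sm B_i$ to $A_j\sm B_j$ would cross the separation $(A_i,B_i)$, so the connected set $U$ lies in a single $A_{i_0}\sm B_{i_0}$, disjoint from $A_{i_0}\cap B_{i_0}$, contradicting $(A_{i_0},B_{i_0})\in O$.

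For $(\Leftarrow)$, given an $\F_k$-tangle $O$, I construct a haven $\beta$ and then take $\B:=\{\beta(X):|X|<k\}$. For each $X\sub V$ with $|X|<k$, the set $\sigma_X:=\{(X\cup C,V\sm C):C\text{ a component of }G-X\}$ is a star (any two of its elements point away from one another) whose big sides intersect in $\bigcap_C(V\sm C)=X$, so $\sigma_X\in\F_k$. Since $O$ avoids $\F_k$, at least one $(V\sm C,X\cup C)$ lies in $O$, and I set $\beta(X):=C$. Uniqueness is immediate from consistency: if two distinct components $C_1\ne C_2$ were both chosen, then as separations $(X\cup C_2,V\sm C_2)\le(V\sm C_1,X\cup C_1)$ (they coincide only when $G-X$ has exactly two components, in which case the two orientations in $O$ are mutual inverses and antisymmetry is violated); otherwise this strict ordering together with $(V\sm C_2,X\cup C_2),(V\sm C_1,X\cup C_1)\in O$ directly violates consistency. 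Each $\beta(Z)$ is disjoint from~$Z$, so every $Z$ with $|Z|<k$ fails to cover $\B$ and $\B$ has order $\ge k$. Pairwise touching of $C_X:=\beta(X)$ and $C_Y:=\beta(Y)$ then follows by a similar consistency argument applied to the witness separations $(V\sm C_X,X\cup C_X)$ and $(V\sm C_Y,Y\cup C_Y)$, after first establishing the monotonicity $\beta(X')\sub\beta(X)$ for $X\sub X'$ by another iterated application of consistency.

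The main obstacle is the touching property in the backward direction. In the clean case $C_X\cap Y=C_Y\cap X=\emptyset$, the inverse $(Y\cup C_Y,V\sm C_Y)$ is strictly below $(V\sm C_X,X\cup C_X)$ and a consistency violation falls out just as for uniqueness. In the general case, the shared vertices have to be absorbed by enlarging $X$ or $Y$ while invoking the monotonicity of $\beta$ to keep the chosen components under control; this reduction is the only genuinely delicate step of the proof.
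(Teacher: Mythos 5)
Your forward direction is essentially the paper's proof and is correct. The backward direction, however, has a genuine gap, and it stems from the choice of separations you associate to a set $X$ with $|X|<k$: you take $\sigma_X = \{(X\cup C,\,V\sm C) : C \text{ a component of } G-X\}$, whereas the paper takes $(A_i,B_i) := (C_i\cup N(C_i),\,V\sm C_i)$. Both sets are stars in $\F_k$ and both lie in $\vS_k$, so up to the selection of $\beta(X)$ the two constructions run in parallel. But the ``small'' side $X\cup C$ carries the entire set $X$, while $C\cup N(C)$ depends only on $C$ itself. That dependence on $X$ is exactly what breaks the touching argument: if $C=\beta(X)$ and $C'=\beta(X')$ do not touch, you can conclude $C'\cap\bigl(C\cup N(C)\bigr)=\emptyset$, but you cannot conclude $C'\cap(X\cup C)=\emptyset$, because nothing stops $C'$ from meeting $X$. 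You flag this yourself as ``the only genuinely delicate step,'' but the fix you sketch does not close it. First, the monotonicity $\beta(X')\sub\beta(X)$ for $X\sub X'$ suffers from the same defect with your separations (the comparison $(X'\cup C',V\sm C')\le(V\sm D,X\cup D)$ again requires $D\cap X'=\emptyset$, which may fail). Second, ``enlarging $X$ to absorb $C_X\cap Y$'' may push $|X|$ to $k$ or beyond, at which point $\beta$ is no longer defined. With the paper's choice none of this is needed: if $C,C'$ do not touch then $C'\sub V\sm\bigl(C\cup N(C)\bigr)=B\sm A$, hence $A'=C'\cup N(C')\sub B$ and by symmetry $A\sub B'$, giving $(A,B)\le(B',A')$ with both $(B,A)$ and $(B',A')$ in $O$, an immediate consistency violation, without any hypothesis relating $X$ and $X'$ and without any appeal to monotonicity. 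So the gap is real, and the remedy is not to patch the reduction but to use $(C\cup N(C),V\sm C)$ in place of $(X\cup C,V\sm C)$ throughout.
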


\begin{proof}
  Let $\B$ be a bramble of order at least $k$. For every $\{A,B\}\in S_k$, since $A\cap B$ is too small to cover~$\B$ but every two sets in $\B$ touch and are connected, exactly one of the sets $A\sm B$ and $B\sm A$ contains an element of~$\B$. Thus,
  \[O=\{\, (A,B)\in \vS_k : B\sm A\text{ contains an element of } \B\,\}\]
  is an orientation of~$S_k$, which for the same reason is clearly consistent.%
   \COMMENT{}

  To show that $O$ avoids $\F_k$, let $\sigma = \{(A_1,B_1),\ldots,(A_n,B_n)\}\in \F_k$ be given. Then $\big|\bigcap_{i=1}^n B_i\big|<k$, so some $C\in\B$ avoids this set and hence lies in the union of the sets $A_i\sm B_i$. But these sets are disjoint, since $\sigma$ is a star, and have no edges between them.%
   \COMB{`no edges' added, thanks to Referee~1!}
   Hence $C$ lies in one of them, $A_1\sm B_1$ say, putting $(B_1,A_1)$ in~$O$. But then $(A_1,B_1)\notin O$, so $\sigma\not\sub O$ as claimed.

  Conversely, let $O$ be an $\F_k$-tangle of~$S_k$. We shall define a bramble~$\B$ containing for every set $X$ of fewer than $k$ vertices exactly one component of~$G-X$, and no other sets. Such a bramble will have order at least~$k$, since no such set $X$ covers it.

Given such a set~$X$, note first that $X\ne V$. For if $|V|<k$ then $\emptyset\in\F_k$, contradicting our assumption that $O$ has no subset in~$\F_k$. Let $C_1,\ldots,C_n$ be the vertex sets of the components of~$G-X$. Consider the separations $(A_i,B_i)$ with $A_i = C_i\cup N(C_i)$ and $B_i = V\sm C_i$. Since
 $$\sigma_X := \{\,(A_i,B_i) \mid\, i=1,\dots,n\,\}$$
 is a star in~$\F_k$, not all the $(A_i,B_i)$ lie in~$O$. So $(B_i,A_i)\in O$ for some~$i$, and since $O$ is consistent this $i$ is unique. Let us make $C_i$ an element of~$\B$.

   \begin{figure}[htpb]
\centering
   	  \includegraphics{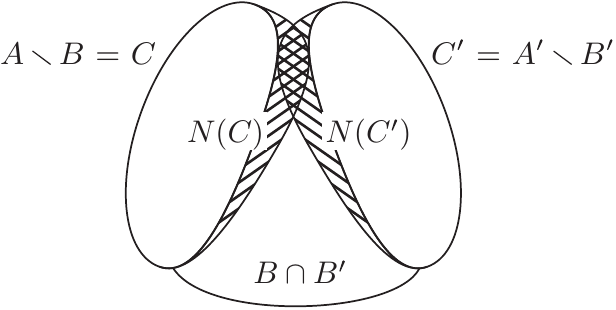}
   	  \caption{If $C,C'$ do not touch, then $(B,A)$ and $(B',A')$ are inconsistent.}
   \label{fig:touch}\vskip-3pt
   \end{figure}

It remains to show that every two sets in~$\B$ touch. Given $C,C'\in\B$, there are sets $X$ and $X'$ such that $\sigma_X$ contains a separation $(A,B)$ with $A = C\cup N(C)$ and $(B,A)\in O$, and likewise for~$C'$. If $C$ and~$C'$ do not touch, then $C'\sub B\sm A$ and hence $A'\sub B$ (Fig.~\ref{fig:touch}), and similarly $A\sub B'$. Hence $(A,B)\le (B',A')\in O$ but also $(B,A)\in O$, contradicting the consistency of~$O$.
   \end{proof}

Theorem~\ref{TangleTWDuality} can thus be extended to incorporate tree-width and brambles:

\begin{THM}[Tangle-bramble-treewidth duality theorem for graphs]\label{BrambleThm}\ifArXiv\else~\\[-9pt]\fi
The following assertions are equivalent for all finite graphs $G$%
   \COMMENT{}
  and~${k>0}\!:$
 \vskip-6pt\vskip0pt
\begin{enumerate}[\rm(i)]\itemsep=0pt
\item $G$ has a bramble of order at least~$k$.
\item $G$ has an $\F_k$-tangle of~$S_k$.
\item $G$ has no $S_k$-tree over~$\F_k$.
\item $G$ has tree-width at least~$k-1$.
\end{enumerate}
\end{THM}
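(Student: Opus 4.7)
The plan is to derive Theorem~\ref{BrambleThm} as an immediate consequence of the three auxiliary results that have already been proved in this section, by stitching together a chain of pairwise equivalences (i)$\Leftrightarrow$(ii)$\Leftrightarrow$(iii)$\Leftrightarrow$(iv). No fresh technical work is required; each of the three links is available off the shelf.

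Concretely, (i)$\Leftrightarrow$(ii) is exactly Lemma~\ref{BrambleTranslation}. The equivalence (ii)$\Leftrightarrow$(iii) comes from Theorem~\ref{TangleTWDuality}, which asserts that an $\F_k$-tangle of $S_k$ and an $S_k$-tree over $\F_k$ cannot coexist and that at least one of them must exist; rearranging this dichotomy gives that having an $\F_k$-tangle is equivalent to there being no $S_k$-tree over $\F_k$. Finally, (iii)$\Leftrightarrow$(iv) follows from Lemma~\ref{lem:twd-tree} by taking contrapositives: that lemma says that $G$ admits an $S_k$-tree over $\F_k$ if and only if ${\rm tw}(G) < k-1$, so the negation of (iii) is equivalent to the negation of (iv).

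The real obstacle was thus not in this final synthesis but in the preparatory work, which has already been completed. The genuinely delicate step was Lemma~\ref{lem:twd-shiftable} (the shift-closedness of $\F_k$ in $S_k$), which is what makes the abstract duality Theorem~\ref{thm:strong} applicable and thereby yields Theorem~\ref{TangleTWDuality}; together with the construction of Lemma~\ref{BrambleTranslation}, which extracts a bramble from an $\F_k$-tangle by picking, for each small separator $X$, the unique component of $G-X$ lying on the `tangle side' of the star $\sigma_X$. Once those are in hand, the present theorem is purely a transcription of three biconditionals, and the proof will consist of little more than naming them.
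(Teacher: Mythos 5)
Your proposal is correct and matches the paper's proof exactly: the paper likewise cites Lemma~\ref{BrambleTranslation} for (i)$\leftrightarrow$(ii), Theorem~\ref{TangleTWDuality} for (ii)$\leftrightarrow$(iii), and Lemma~\ref{lem:twd-tree} for (iii)$\leftrightarrow$(iv). Your observation that the real work lives in Lemma~\ref{lem:twd-shiftable} and Lemma~\ref{BrambleTranslation} rather than in this final synthesis is also an accurate reading of the section's structure.
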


\begin{proof}
(i)$\leftrightarrow$(ii) is Lemma~\ref{BrambleTranslation}.
(ii)$\leftrightarrow$(iii) is Theorem~\ref{TangleTWDuality}.
(iii)$\leftrightarrow$(iv) is Lemma~\ref{lem:twd-tree}.
\end{proof}

\goodbreak

\section{Tangle duality for path-width in graphs}\label{sec:pathwidth}

A \emph{\pd} of a graph $G$ is a \td\ of $G$ whose decomposition tree is a path. The \emph{path-width} of~$G$ is the least width of such a \td. By Lemma~\ref{lem:twd-tree}, $G$~has path-width~$<k-1$ if and only if it has an $S_k$-tree over%
   \COMB{Def of $\F_k^{(2)}$ rewritten (Referees 1 and 2)}
 $$\textstyle \F_k^{(2)} := \big\{\, \sigma\in~\F_k: |\sigma|\le 2\,\big\},$$
   where $\F_k$ is defined as in Section~\ref{sec:twd}.%
   \COMMENT{}
   Theorem~\ref{TangleTWDuality} has the following analogue:

\begin{THM}[Tangle-pathwidth duality theorem for graphs]\label{TanglePWDuality}
~\\[1pt] For every $k > 0$, every graph $G$ satisfies exactly one of the following assertions:
\vskip-6pt\vskip0pt
\begin{enumerate}[\rm(i)]\itemsep0pt
  \item $G$ has an $\F_k^{(2)}$-tangle of~$S_k$.
  \item $G$ has an $S_k$-tree over~$\F_k^{(2)}$.
  \end{enumerate}
\end{THM}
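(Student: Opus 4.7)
The plan is to mimic the proof of Theorem~\ref{TangleTWDuality} and apply Theorem~\ref{thm:strong} with $\vS=\vS_k$ and $\F=\F_k^{(2)}$. By Lemmas~\ref{lem:Fsep} and~\ref{lem:separable}, it suffices to check that $\F_k^{(2)}$ is standard for~$\vS_k$ and that it is closed under shifting in~$\vS_k$. Standardness is essentially inherited from~$\F_k$: the forcing relation only concerns singleton members of the family, and every singleton has size $1\le 2$, so $\{\sv\}\in\F_k^{(2)}$ if and only if $\{\sv\}\in\F_k$. Hence $\F_k^{(2)}$ forces exactly the same separations as~$\F_k$, and in particular every trivial separation of~$\vS_k$.

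For closure under shifting, I would leverage Lemma~\ref{lem:twd-shiftable} directly. Suppose $\vso\in\vS_k$ emulates some nontrivial and nondegenerate $\vr\in\vS_k$ not forced by~$\F_k^{(2)}$, and hence (by the equivalence just noted) not forced by~$\F_k$. Let $\sigma\in\F_k^{(2)}$ be a star contained in $\vSr\sm\{\rv\}$ with some element $\ge\vr$. Since $\sigma\in\F_k$, Lemma~\ref{lem:twd-shiftable} yields that the shifted family $\sigma':=f\shift(\!\vr)(\vso)(\sigma)$ lies in~$\F_k$. Moreover, the shifting map is defined elementwise, so $|\sigma'|\le|\sigma|\le 2$; therefore $\sigma'\in\F_k^{(2)}$, as required.

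With the three hypotheses of Theorem~\ref{thm:strong} verified, the dichotomy of (i) and~(ii), together with their mutual exclusivity, is immediate. I do not anticipate any genuine obstacle beyond transcribing the tree-width argument and adding the observation that the size constraint $|\sigma|\le 2$ is trivially preserved by any elementwise map; the only content-bearing step in the shifting argument, namely the bound on $\bigl|\bigcap B_i\bigr|$, is already encapsulated in Lemma~\ref{lem:twd-shiftable}.
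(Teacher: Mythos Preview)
Your proposal is correct and follows exactly the route taken in the paper, which simply cites Theorem~\ref{thm:strong} together with Lemmas~\ref{lem:Fsep}, \ref{lem:separable} and~\ref{lem:twd-shiftable}. You have merely made explicit the two small observations the paper leaves implicit: that the singleton stars forcing the trivial separations are the same in $\F_k$ and $\F_k^{(2)}$, and that the shifting map, being defined elementwise, cannot increase~$|\sigma|$.
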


\begin{proof}
   Apply Theorem~\ref{thm:strong} and Lemmas~\ref{lem:Fsep}, \ref{lem:separable} and~\ref{lem:twd-shiftable}.
\end{proof}

Bienstock, Robertson, Seymour and Thomas~\cite{QuicklyExcludingForest} also found tangle-like HCSs dual to path-width, which they call `blockages'.%
   \footnote{They go on to show that all graphs with a blockage of order~$k-1$~-- which are precisely the graphs of path-width at least~$k-1$~-- contain every forest of order~$k$ as a minor. This corollary is perhaps better known than the path-width duality theorem itself.}
  Let us define these, and then incorprorate their result into our duality theorem with a unified proof.

Given a set $X$ of vertices in $G=(V,E)$, let us write $\partial(X)$ for the set of vertices in $X$ that have a neighbour outside~$X$. A \emph{blockage of order $k-1$}, according to~\cite{QuicklyExcludingForest}, is a collection $\B$ of sets $X\sub V$ such that
\begin{enumerate}[(B1)]\itemsep=0pt
\item $\abs{\partial(X)} < k$ for all $X\in\B$;
\item $X'\in\B$ whenever $X'\subseteq X\in\B$ and $\abs{\partial(X')} < k$;
\item for every $\{X_1,X_2\}\in S_k$, exactly one of $X_1$ and $X_2$ lies in~$\B$.
\end{enumerate}

To deduce the duality theorem of~\cite{QuicklyExcludingForest} from Theorem~\ref{thm:strong}, we just need to translate blockages into orientations of~$S_k$:%
   \COMMENT{}

\begin{THM}[Tangle-blockage-pathwidth duality theorem for graphs]\ifArXiv\else~\\[-9pt]\fi
The following assertions are equivalent for finite graphs $G\ne\emptyset$ and~${k>0}\!:$
 \vskip-6pt\vskip0pt
  \begin{enumerate}[\rm(i)]\itemsep=0pt
  \item $G$ has a blockage of order $k-1$.
  \item $G$ has an $\F_k^{(2)}$-tangle of~$S_k$.
  \item $G$ has no  $S_k$-tree over $\F_k^{(2)}$.
  \item $G$ has path-width at least $k-1$.
  \end{enumerate}
\end{THM}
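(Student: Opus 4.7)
My plan is to prove this four-way equivalence by three links. The equivalences (ii)$\leftrightarrow$(iii) and (iii)$\leftrightarrow$(iv) reduce quickly to material already in place: (ii)$\leftrightarrow$(iii) is Theorem~\ref{TanglePWDuality}, and for (iii)$\leftrightarrow$(iv) I would adapt Lemma~\ref{lem:twd-tree}, using that the cap $|\sigma|\le 2$ in $\F_k^{(2)}$ forces the underlying tree of any $S_k$-tree over $\F_k^{(2)}$ to have maximum degree at most~$2$, i.e.\ to be a path, whereupon the bag construction $V_t=\bigcap\{B:(A,B)=\alpha(s,t),\ st\in E(T)\}$ produces a \pd\ of width~$<k-1$, and conversely every such \pd\ produces an $S_k$-tree over $\F_k^{(2)}$.

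The real work is (i)$\leftrightarrow$(ii), modelled on Lemma~\ref{BrambleTranslation}. For (i)$\to$(ii), given a blockage $\B$ I would set $O:=\{(X,Y)\in\vS_k:X\in\B\}$; (B3) makes $O$ an orientation of $S_k$, and consistency follows from (B2): if $(A,B)<(C,D)$ with $(B,A),(C,D)\in O$, then $B,C\in\B$, and (B2) applied to $D\sub B$ (noting $|\partial D|<k$) puts $D\in\B$, contradicting (B3) on $\{C,D\}$. For $\F_k^{(2)}$-avoidance I first observe that $\B\ne\emptyset$ together with (B2) forces $\emptyset\in\B$, hence $V\notin\B$ by (B3) on $\{V,\emptyset\}$; any singleton $\{(A,B)\}\in\F_k^{(2)}$ then has $|B|<k$, so (B3) on $\{V,B\}\in S_k$ gives $B\in\B$ and $A\notin\B$, so $(A,B)\notin O$. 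For a size-$2$ star $\{(A_0,B_0),(A_1,B_1)\}\in\F_k^{(2)}$ the star property makes $\{A_0\cup A_1,B_0\cap B_1\}$ lie in $S_k$, and I would combine (B3) on this join with (B3) on $\{A_i,B_i\}$ and (B2) applied to the various inclusions to rule out $A_0,A_1\in\B$ simultaneously. For (ii)$\to$(i) I would set $\B:=\{X\sub V:|\partial X|<k,\ (X,(V\sm X)\cup\partial X)\in O\}$ and verify (B1)--(B3) from $O$ being an orientation of $S_k$, its consistency (applied to the nested canonical separations of any $X'\sub X$), and its $\F_k^{(2)}$-avoidance of singletons.

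I expect the size-$2$ star step in (i)$\to$(ii) to be the hardest: (B3) on the join $\{A_0\cup A_1,B_0\cap B_1\}$ yields two subcases, and (B2)'s downward closure does not on its own kill the case $B_0\cap B_1\in\B$ from the hypothesis $A_0,A_1\in\B$. Overcoming this should require also invoking the meet $\{A_0\cap A_1,B_0\cup B_1\}\in S_k$ and iterating (B2) and (B3) between the four separations $\{A_i,B_i\}$, the join, and the meet, which is where the combinatorial content distinguishing pathwidth from treewidth has to enter.
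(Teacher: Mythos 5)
Your high-level structure matches the paper's: (ii)$\leftrightarrow$(iii) is Theorem~\ref{TanglePWDuality}, (iii)$\leftrightarrow$(iv) is the path version of Lemma~\ref{lem:twd-tree} (the ``more precisely'' clause there already gives the same tree, and $\lvert\sigma\rvert\le 2$ forces it to be a path), and (i)$\leftrightarrow$(ii) is the translation between blockages and orientations. Your singleton-avoidance and consistency arguments for (i)$\to$(ii), and your construction of $\B$ in (ii)$\to$(i), are also in line with what the paper does.

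However, the step you flag as ``the hardest''~-- showing that $O$ contains no $2$-element star of $\F_k^{(2)}$~-- is indeed the one place where your proposal has a real gap, and the join/meet route you sketch does not close it. As you note yourself, from $A_0,A_1\in\B$ and (B3) on the join $\{A_0\cup A_1,\,B_0\cap B_1\}$ you cannot exclude $B_0\cap B_1\in\B$: (B2) only propagates membership in $\B$ \emph{downwards} under inclusion, so there is no way to pass from $A_0\in\B$ up to $A_0\cup A_1\in\B$; and the meet $\{A_0\cap A_1,\,B_0\cup B_1\}$ degenerates (the star property gives $B_0\cup B_1=V$), yielding only the unhelpful $A_0\cap A_1\in\B$. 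The paper avoids the join and meet entirely by a single cleaner observation: if $\{(A_0,B_0),(A_1,B_1)\}$ is a star, then $A_1\subseteq B_0$ and $A_0\subseteq B_1$, so $B_0\cup B_1\supseteq A_1\cup B_1=V$; moreover there is no edge between $B_0\setminus B_1$ and $B_1\setminus B_0$ because these are contained in $B_0\setminus A_0$ and $A_0\setminus B_0$ respectively; and the star being in $\F_k^{(2)}$ means precisely $\abs{B_0\cap B_1}<k$. Hence $\{B_0,B_1\}$ is itself a vertex separation in $S_k$, and one now simply chains (B3) three times: $A_0\in\B$ gives $B_0\notin\B$ (on $\{A_0,B_0\}$), hence $B_1\in\B$ (on $\{B_0,B_1\}$), hence $A_1\notin\B$ (on $\{A_1,B_1\}$), i.e.\ $(A_1,B_1)\notin O$. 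No use of (B2) is needed in this step at all.

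A smaller issue in (ii)$\to$(i): you claim (B1)--(B3) follow from consistency and $\F_k^{(2)}$-avoidance ``of singletons'', but the paper's verification of (B3) genuinely uses avoidance of a $2$-element star (it builds $\{(X_1\cap Y_2,X_2),(X_2,Y_2)\}\in\F_k^{(2)}$ from a hypothetical violation). Singleton-avoidance alone does not suffice there.
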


\begin{proof}
Theorem~\ref{TanglePWDuality} asserts the equivalence of (ii) and~(iii), while (iii) is equivalent to~(iv) by Lemma~\ref{lem:twd-tree}.%
   \COMMENT{}

(i)$\to$(ii): Suppose that $G$ has a blockage~$\B$ of order $k-1$. By (B2) and (B3), 
  \[ O=\{\,(X,Y)\in \vS_k: X\in \B\,\}\] 
  is a consistent orientation of~$S_k$.

For a proof that $O$ avoids every singleton star $\{(A,X)\}\in\F_k^{(2)}$ it suffices to show that $\B$ contains every set~$X$ of order~$<k$: then $(X,A)\in O$ and hence $(A,X)\notin O$. To show that $X\in\B$, consider the separation $\{X,V\}\in S_k$. If $V\in\B$, then also $X\in\B$ by~(B2), contradicting~(B3). Hence $V\notin\B$, and thus $X\in\B$ by~(B3).

To complete the proof that $O$ avoids~$\F_k^{(2)}$ consider ${\{(A_1,B_1),(A_2,B_2)\}\in \F_k^{(2)}}$ with $(A_1,B_1)\neq (A_2,B_2)$, and suppose that $(A_1,B_1)\in O$. Since $\{(A_1,B_1),(A_2,B_2)\}$ is a star,%
   \COMMENT{}
   $\{B_1,B_2\}$ is a separation. As $|B_1\cap B_2|<k$ by definition of~$\F_k^{(2)}$, it lies in~$S_k$. Applying (B3) three times, we deduce from our assumption of $A_1\in \B$ that $B_1\notin \B$, and hence $B_2\in\B$, and hence $A_2\notin \B$. Thus, $(A_2,B_2)\notin O$.

(ii)$\to$(i): Let $O$ be an $\F_k^{(2)}$-tangle of~${S}_{k}$. We claim that
 $$\B := \{\, X : (X,Y)\in O\,\}$$
is a blockage of order $k-1$. Clearly, $\B$ satisfies (B1). 

Given $\{X_1,X_2\}\in S_k$ as in~(B3),%
   \COMMENT{}
   assume that $(X_1,X_2)\in O$. Then $X_1\in \B$. If also $X_2\in\B$, there exists $Y_2$ such that $(X_2,Y_2)\in O$. Then $\{X_1\cap Y_2,X_2\}$ is still a separation of~$V\!$,%
   \COMMENT{}
   and clearly in~$S_k$. As $(X_1\cap Y_2,X_2)\le (X_1,X_2)\in O$ and $O$ is consistent, we have $(X_1\cap Y_2,X_2)\in O$. Then $\{(X_1\cap Y_2,X_2),(X_2,Y_2)\}$ is a star in $\F_k^{(2)}$, contradicting our assumption. 

Given $X'\sub X\in\B$ as in~(B2), with $(X,Y)\in O$ say,%
   \COMMENT{}
   let $Y':= \partial(X')\cup (V\sm X')$ and $Z := \partial(X)\cup (V\sm X)$. Then $Z\sub Y$ and hence $|X\cap Z|\le |X\cap Y| < k$, so $\{X,Z\}\in S_k$. By (B3)%
   \COMMENT{}
   we have $Z\notin \B$ and hence $(Z,X)\notin O$, so $(X,Z)\in O$.%
   \COMMENT{}
   Since $O$ is consistent and $\vS_k\owns (X',Y')\le (X,Z)$,%
   \COMMENT{}
   we thus obtain $(X',Y')\in O$ and hence $X'\in\B$, as desired.
\end{proof}

\section{Tangle duality for tree-width in matroids}\label{sec:matroid}
Hlin{\v e}n{\'y} and Whittle~\cite{HlinenyWhittle,HlinenyWhittleErrata} generalized the notion of tree-width from graphs to matroids.%
   \footnote{In our matroid terminology we follow Oxley~\cite{OxleyBook}.}
   Let us show how Theorem~\ref{thm:strong} implies a duality theorem for tree-width in matroids.%
   \COMMENT{}

Let $M=(E,I)$ be a matroid with rank function $r$. Its \emph{connectivity function} is defined as
 $$\lambda(X) := r(X)+r(E\sm X)-r(M).$$
 We consider the universe $\vU$ of all bipartitions $(X,Y)$ of~$E$. Since
 $$\ord(X,Y):= \lambda(X) = \lambda(Y)$$
 is non-negative, submodular and symmetric, it is an order function on~$\vU$, so our universe $\vU$ is submodular.

A \emph{\td} of~$M$ is a pair~$(T,\tau)$, where $T$ is a tree and ${\tau\colon E\to V(T)}$ is any map.
Let $t$ be a node of~$T$, and let $T_1,\ldots,T_d$ be the components of $T-t$. Then the \emph{width} of~$t$ is the number%
   \COMMENT{}
\[ \sum_{i=1}^d r(E\sm F_i) - (d-1)\, r(M),\] 
where $F_i=\tau^{-1}(V(T_i))$. (If $t$ is the only node of~$T$, we let its width be~$r(M)$.) The \emph{width} of $(T,\tau)$ is the maximum width of the nodes of~$T$. The \emph{tree-width} of~$M$ is the minimum width over all \td{}s of~$M$.

\goodbreak

Matroid tree-width was designed so as to generalize the tree-width of graphs:

\begin{THM}[Hlin{\v e}n{\'y} and Whittle~\cite{HlinenyWhittle,HlinenyWhittleErrata}]
  The tree-width of a finite graph containing at least one edge equals the tree-width of its cycle matroid.
\end{THM}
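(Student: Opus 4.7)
The plan is to prove both inequalities separately by translating between graph tree-decompositions of $G$ and matroid tree-decompositions of $M(G)$.

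For $\mathrm{tw}(M(G))\le\mathrm{tw}(G)$, I would start from a graph tree-decomposition $(T,(V_t)_{t\in V(T)})$ of $G$ of width $w$ and define $\tau\colon E(G)\to V(T)$ by sending each edge $e=uv$ to some $\tau(e)=t$ with $\{u,v\}\subseteq V_t$, which is possible by the definition of a tree-decomposition. For a node $t$ with components $T_1,\dots,T_d$ of $T-t$ and $F_i:=\tau^{-1}(V(T_i))$, writing $r(F)=|V(G)|-c(V(G),F)$ for the rank in $M(G)$, the matroid-width at $t$ equals
\[
  \sum_{i=1}^d r(E\setminus F_i)-(d-1)r(M) \ =\ |V(G)|-\sum_{i=1}^d c(V(G),E\setminus F_i)+(d-1)c(G).
\]
I would show this is at most $|V_t|-1\le w$. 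The key observation is that every vertex $v\notin V_t$ has its set of bag-containing nodes $\{t':v\in V_{t'}\}$ forming a subtree of $T$ that avoids $t$ and hence lies in a unique component $T_j$ of $T-t$; consequently every edge at $v$ is in $F_j$, making $v$ an isolated vertex of $(V(G),E\setminus F_j)$. Summing these extra components across~$i$ yields the bound.

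For the reverse inequality $\mathrm{tw}(G)\le\mathrm{tw}(M(G))$, given a matroid tree-decomposition $(T,\tau)$ of $M(G)$, for each $v\in V(G)$ I would let $T(v)\subseteq T$ be the smallest subtree containing $\tau(e)$ for every edge $e$ incident to~$v$, and set
\[
  V_t\ :=\ \{\,v\in V(G): t\in V(T(v))\,\},
\]
with any isolated vertices of $G$ absorbed into singleton bags appended as new leaves of~$T$. Then $(T,(V_t))$ is a graph tree-decomposition: each edge $e=uv$ has both ends in $V_{\tau(e)}$, and $\{t:v\in V_t\}=V(T(v))$ is a subtree by construction. It remains to show $|V_t|-1\le$ (matroid-width at $t$); this follows by the reverse counting, since every $v\in V_t$ is either incident to an edge of $\tau^{-1}(t)$ or has incident edges in at least two distinct $F_i$, and each such $v$ contributes the appropriate number of times to the terms $c(V(G),E\setminus F_i)$.

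The main obstacle is the combinatorial bookkeeping around the identity
\[
  \lambda_M(F)\ =\ |V(G)|-c(V(G),F)-c(V(G),E\setminus F)+c(G):
\]
one has to track how isolated vertices of $G$, the global constant $c(G)$, and the overlap between $F_0=\tau^{-1}(t)$ and the pieces $F_i$ combine into the sum $\sum_i c(V(G),E\setminus F_i)$ over the components of $T-t$. The hypothesis that $G$ has an edge rules out trivial degeneracies (so $M(G)$ is not the empty matroid). Once this accounting is in place, the two constructions are mutually inverse up to the choice of~$\tau$ within each bag, and the two tree-widths coincide exactly.
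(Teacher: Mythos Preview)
The paper does not contain a proof of this theorem: it is merely stated, with attribution to Hlin\v{e}n\'y and Whittle~\cite{HlinenyWhittle,HlinenyWhittleErrata}, and then used as background motivation for the matroid tree-width duality result that follows. There is therefore no proof in the paper to compare your proposal against.

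For what it is worth, your outline follows the natural two-sided translation that Hlin\v{e}n\'y and Whittle themselves use: convert a graph tree-decomposition into a matroid one by mapping each edge to a bag containing both its ends, and conversely build bags $V_t$ from a matroid decomposition via the subtrees $T(v)$. Your identification of the bookkeeping around $r(F)=|V|-c(V,F)$ and the component counts $c(V,E\setminus F_i)$ as the crux is accurate; that is exactly where the work lies. One place where your sketch is thinner than it looks is the reverse inequality: the assertion that ``each such $v$ contributes the appropriate number of times'' hides a genuine counting argument, and you should be explicit that a vertex $v\in V_t$ with edges in exactly $m$ of the sets $F_1,\dots,F_d$ (and none in $F_0$) fails to be isolated in exactly $m$ of the graphs $(V,E\setminus F_i)$, and track how this interacts with the global component count~$c(G)$. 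But the strategy is sound.
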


In order to specialize Theorem~\ref{thm:strong} to a duality theorem for tree-width in matroids, we consider for $k>0$ the set
 $$S_k = \{\,\{A,B\}\in U: \ord(A,B)<k\,\};$$
 then $\vS_k$ is separable by Lemma~\ref{lem:separable}. For stars $\sigma = \{(A_i,B_i) :i = 0,\ldots,n\}\sub\vU$ we write 
 $$\starorder\sigma := r(M) + \sum_{i=0}^n \big( r(B_i) - r(M)\big)\,.$$%
   \COMMENT{}
   We consider
 $$\F_k := \big\{\, \sigma\sub \vU : \sigma \text{ is a star with } \starorder\sigma <k\,\big\}.$$%
   \COMMENT{}

Clearly, the singleton stars $\{(A,B)\}$ in~$\F_k$ are precisely those with $r(B) < k$, and the empty star lies in~$\F_k$ if and only if $r(M)<k$. We remark that requiring $\sigma\sub \vS_k$ in the definition of~$\F_k$ would not spare us a proof of the following lemma, which we shall need in the proof of Lemma~\ref{lem:matroidTDs}.

\begin{LEM}\label{lem:matroidTW}
Every $\sigma\in\F_k$ is a subset of~$\vS_k$.
\end{LEM}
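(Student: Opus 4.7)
The plan is to induct on $|\sigma|$, reducing each larger star to a smaller one by replacing two of its members with their join in~$\vU$.

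For the base cases $|\sigma|\le 2$ the conclusion is immediate. If $\sigma=\emptyset$ there is nothing to show; if $\sigma=\{(A_0,B_0)\}$, then $\starorder{\sigma}=r(B_0)$ and monotonicity of~$r$ gives $\lambda(A_0)=r(A_0)+r(B_0)-r(M)\le r(B_0)<k$. If $\sigma=\{(A_0,B_0),(A_1,B_1)\}$, the star condition applied to bipartitions forces $A_i\sub B_{1-i}$, so $r(A_i)\le r(B_{1-i})$ by monotonicity and $\lambda(A_i)\le r(B_0)+r(B_1)-r(M)=\starorder{\sigma}<k$.

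For the inductive step, assume $|\sigma|=n+1\ge 3$ and fix the index $i$ for which we wish to show $(A_i,B_i)\in\vS_k$. Choose two distinct indices $j,j'\ne i$ and pass to
\[
\sigma' := \bigl(\sigma\sm\{(A_j,B_j),(A_{j'},B_{j'})\}\bigr)\cup\bigl\{(A_j\cup A_{j'},\,B_j\cap B_{j'})\bigr\}.
\]
That $\sigma'$ is again a star (still containing $(A_i,B_i)$) follows directly from the inclusions built into the star condition on~$\sigma$: every remaining $A_l$ lies in $B_j\cap B_{j'}$, and every remaining $B_l$ contains $A_j\cup A_{j'}$. The key quantitative ingredient is the inequality
\[
r(B_j\cap B_{j'})\le r(B_j)+r(B_{j'})-r(M),
\]
which comes from submodularity of~$r$ together with the observation that, in the bipartition universe, distinct members of a star have disjoint $A$-sides; hence $B_j\cup B_{j'}=E\sm(A_j\cap A_{j'})=E$ and $r(B_j\cup B_{j'})=r(M)$. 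Plugging this into the definition of $\starorder{\sigma'}$ gives $\starorder{\sigma'}\le\starorder{\sigma}<k$, so $\sigma'\in\F_k$; the inductive hypothesis then places every element of~$\sigma'$, and in particular $(A_i,B_i)$, into~$\vS_k$. Letting $i$ range over $\{0,\dots,n\}$ finishes the argument.

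The main obstacle is identifying this reduction. A tempting direct route---bounding $r(A_i)$ by iterated submodularity on the $B_j$ for $j\ne i$, using $A_i\sub\bigcap_{j\ne i}B_j$ and $\bigcup_{j\ne i}B_j=E$---only yields $\sum_{j\ne i}r(B_j)\ge r(M)+(n-1)r(A_i)$, which is strictly weaker than the inequality $\sum_{j\ne i}r(B_j)\ge r(A_i)+(n-1)r(M)$ one actually needs to conclude $\lambda(A_i)<k$ in one shot. The join-combination trick sidesteps this by producing a smaller star of no larger $\starorder$, letting the induction do what no single inequality can.
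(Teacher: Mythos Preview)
Your proof is correct. It is essentially a repackaging of the paper's argument: both rest on the single submodularity step $r(B_j\cap B_{j'})+r(M)\le r(B_j)+r(B_{j'})$, valid because distinct members of a star of bipartitions satisfy $B_j\cup B_{j'}=E$. You apply this step inductively, merging two members of the star at a time; the paper applies it directly as a telescoping chain $r(B_i^*)+r(B_{i+1})\ge r(B_{i+1}^*)+r(M)$ along $B_1^*,B_2^*,\dots,B_n^*$ (with $B_i^*:=B_1\cap\dots\cap B_i$), obtaining $\sum_{j=1}^n r(B_j)\ge r(B_n^*)+(n-1)\,r(M)\ge r(A_0)+(n-1)\,r(M)$ and hence $\lambda(A_0)\le\starorder\sigma$ in one line.

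Your closing paragraph is therefore mistaken: the ``tempting direct route'' you dismiss is precisely what the paper does, and it \emph{does} yield $\sum_{j\ne i}r(B_j)\ge r(A_i)+(n-1)\,r(M)$, not the weaker bound you state. The point is to keep the running intersection $B_n^*$ through the telescope and invoke $r(B_n^*)\ge r(A_i)$ only once, at the end, rather than prematurely at each step.
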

 
\begin{proof}
We show that every $A_i$ in a star $\sigma = \{(A_i,B_i):i=0,\ldots,n\}\sub\vU$ satisfies $\lambda(A_i)\le\starorder\sigma$; if $\sigma\in\F_k$, this implies that $\ord(A_i,B_i)<k$ as desired. Our proof will be for $i=0$; the other cases then follow by symmetry. 
 
Since $\sigma$ is a star we have $A_i\sub B_j$ whenever $i\ne j$, and in particular $A_{i+1}\sub B_i^*:= B_1\cap\ldots\cap B_i$ for $i=1,\dots,n-1$.%
   \COMMENT{}
   Hence $B_i^*\cup B_{i+1}\supe E$. Submodularity of the rank function now gives%
   \COMMENT{}
\[
r(B_i^*)+r(B_{i+1})\ge r(B_i^*\cap B_{i+1}) + r(B_i^*\cup B_{i+1}) = r(B_{i+1}^*) + r(M)
\]%
   \COMMENT{}
   for each $i=1,\ldots,n-1$. Summing these inequalities over $i=1,\ldots,n-1$, and noting that $B^*_1 = B_1$, yields
 $$r(B_1)+\ldots+r(B_n)\ge r(B_1\cap \ldots\cap B_n) + (n-1)\, r(M).$$
Using that $\sigma$ is a star and hence $A_0 \sub B_1\cap\ldots\cap B_n$, we deduce
\begin{align*} \starorder\sigma =
\sum_{i=0}^n r(B_i) - n\, r(M) &\ge r(B_0)+r(B_1\cap \ldots\cap B_n)-r(M)\\
 \noalign{\vskip-6pt}
   & \ge r(B_0)+r(A_0)-r(M)\\
 \noalign{\vskip3pt}
   & = \lambda(A_0).
\end{align*}
as desired.
\end{proof}

In order to apply Theorem~\ref{thm:strong}, we have to prove that $\vS_k$ is $\F_k$-separable:

\begin{LEM}
  $\vS_k$ is $\F_k$-separable.
\end{LEM}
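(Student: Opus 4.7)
The plan is to invoke Lemma~\ref{lem:Fsep} together with Lemma~\ref{lem:separable}, reducing the task to showing that $\F_k$ is closed under shifting in $\vS_k$. So I would fix an emulator $\vso = (X,Y) \in \vS_k$ of some nontrivial, nondegenerate $\vr \in \vS_k$ not forced by $\F_k$, together with a star $\sigma = \{(A_i, B_i) : i = 0, \ldots, n\} \in \F_k$ contained in $\vSr \setminus \{\rv\}$ and with $\vr \le (A_0, B_0)$. Writing $\sigma' = f\shift(\!\vr)(\vso)(\sigma)$, so that $(A'_0, B'_0) = (A_0 \cup X, B_0 \cap Y)$ and $(A'_i, B'_i) = (A_i \cap Y, B_i \cup X)$ for $i \ge 1$, Lemma~\ref{lem:shifting} guarantees that $\sigma'$ is again a star, and the task reduces to verifying $\starorder{\sigma'} < k$.

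I would adapt the strategy of Lemma~\ref{lem:twd-shiftable}, summarizing the $(n{+}1)$-star by a 2-star via the auxiliary bipartition $\vt := (B^*, A^*)$, where $B^* := \bigcap_{i \ge 1} B_i$ and $A^* := \bigcup_{i \ge 1} A_i$. The star axioms force $A_0 \subseteq B^*$ and $A^* \subseteq B_0$, so $(A_0, B_0) \le \vt$ and hence $\vr \le \vt$. Emulation of $\vr$ by $\vso$ then supplies
\[ (A_0,B_0) \vee \vso = (A_0 \cup X,\,B_0 \cap Y) \in \vS_k \quad\text{and}\quad \vt \vee \vso = (B^* \cup X,\,A^* \cap Y) \in \vS_k, \]
i.e.\ both $\lambda(A_0 \cup X) < k$ and $\lambda(B^* \cup X) < k$. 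Additionally, iterated submodularity of $r$ applied to $\{B_i\}_{i\ge 1}$, whose pairwise unions equal $E$ (the star identity $B_i \cup B_j = E$), yields $\sum_{i \ge 1} r(B_i) \ge (n-1)r(M) + r(B^*)$, so that the 2-star $\tau := \{(A_0, B_0), (A^*, B^*)\}$ itself satisfies $\starorder\tau \le \starorder\sigma < k$.

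The crux of the argument is to establish the sharper inequality $\starorder{\sigma'} \le \starorder\sigma$, which combined with $\starorder\sigma < k$ finishes the proof. Equivalently, one aims for $\sum_{i=0}^n r(B'_i) \le \sum_{i=0}^n r(B_i)$. The plan is to apply iterated submodularity to the shifted sets $\{B_i \cup X\}_{i \ge 1}$ (whose pairwise unions remain $E$ and whose total intersection equals $B^* \cup X$), and to pair this with submodularity on $(B_0, Y)$ on the other side; the two emulation bounds $\lambda(A_0 \cup X) < k$ and $\lambda(B^* \cup X) < k$ then provide the necessary control, using the star identity $B_0 \cup B^* = E$ so that $(B_0 \cap Y) \cup (B^* \cup X) = E$ and submodularity converts the intersection rank into a bound on $\starorder{\sigma'}$.

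The main obstacle will be the bookkeeping of these submodular inequalities, since iterated submodularity on $\{B_i \cup X\}$ naturally yields a \emph{lower} bound on $\sum r(B_i \cup X)$ in terms of $r(B^* \cup X)$, while the desired conclusion requires an upper bound on $\starorder{\sigma'}$. Overcoming this demands that the slack from each pairwise submodular step (controlled by the disjointness $A'_i \cap A'_j = \emptyset$ among the shifted $A$-sides and the dual $B'_i \cup B'_j = E$ among the $B$-sides) cancel against the emulation bounds to leave $\starorder{\sigma'} \le \starorder\sigma$. This is the matroid-theoretic analogue of the clean graph identity $\bigcap_{i=0}^n B'_i = (B^* \cup X) \cap (B_0 \cap Y)$ exploited in Lemma~\ref{lem:twd-shiftable}; once the corresponding collapse is achieved, the proof concludes immediately.
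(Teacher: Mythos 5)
Your framing via Lemma~\ref{lem:Fsep} is already off track, and it is not a cosmetic choice: the paper does \emph{not} route this lemma through ``$\F_k$ is closed under shifting,'' precisely because that notion quantifies over \emph{every} emulator $\vso$ of $\vr$ in $\vS_k$, whereas the matroid argument needs a very particular one. The paper chooses $(X,Y)$ with $\vr\le(X,Y)\le\vrdash$ and $\ord(X,Y)$ \emph{minimum}, and the decisive step is the minimality inequality $\ord(X\cap B^*,\,Y\cup A^*)\ge\ord(X,Y)$, i.e.\ $r(X\cap B^*)+r(Y\cup A^*)\ge r(X)+r(Y)$. That inequality is what lets the three submodular chains add up to the required $\starorder{\sigma'}\le\starorder\sigma$. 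For an arbitrary emulator this inequality has no reason to hold, so the ``closed under shifting'' statement you set out to prove is stronger than what is actually true (or at least stronger than what the paper proves), and Lemma~\ref{lem:Fsep} is simply the wrong tool.

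You do correctly identify the target $\starorder{\sigma'}\le\starorder\sigma$, reduce it to bounding $r(Y\cup B_0)+\sum_{i\ge1} r(X\cap B_i)$ from below by $r(Y)+n\,r(X)$, and you even notice the directionality obstacle. But the resolution you gesture at---that ``the slack \dots cancel against the emulation bounds''---does not exist. The emulation facts you invoke, $\lambda(A_0\cup X)<k$ and $\lambda(B^*\cup X)<k$, are \emph{upper} bounds on orders of shifted separations; what the argument actually needs at the final step is the \emph{lower} bound $r(X\cap B^*)+r(Y\cup A^*)\ge r(X)+r(Y)$ on the order of an un-shifted separation $(X\cap B^*,\,Y\cup A^*)$ lying between $\vr$ and $\vrdash$, which is extracted from the minimal choice of $(X,Y)$ and is not deducible from the emulation property alone. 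Without this extra input the iterated-submodularity chain on $\{X\cap B_i\}$ and the single submodular step on $(B_0,Y)$ leave an uncontrolled term, and the proof does not close. So the missing idea is precisely the one the abstract Lemma~\ref{lem:Fsep}/``closed under shifting'' framing forecloses: choose the emulator of minimum order and use its optimality as a submodularity input, exactly as in the proof of Lemma~\ref{lem:separable}, rather than treating emulation as a black box.
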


\begin{proof}
Let $\vr,\rvdash\in\vS_k$ be given: nondegenerate, nontrivial, not forced by~$\F_k$, and satisfying $\vr\le\vrdash$. Pick $(X,Y)\in\vS_k$ with $\vr\le (X,Y)\le\vrdash$ and $|X,Y|$ minimum.%
   \COMMENT{}
   We claim that $(X,Y)$ emulates $\vr$ in~$\vS_k$ for~$\F_k$, and that $(Y,X)$ emulates $\rvdash$ in~$\vS_k$ for~$\F_k$. By symmetry,%
   \COMMENT{}
   it is enough to prove that $(X,Y)$ emulates $\vr$ for~$\F_k$.%
   \COMMENT{}
   
The proof of Lemma~\ref{lem:separable} shows that $(X,Y)$ emulates~$\vr$.%
   \footnote{Technically, we do not need this fact at this point and could use Lemma~\ref{lem:matroidTW} to deduce it from the fact that all $\sigma'$ as below lie in~$\F_k$. But that seems heavy-handed.}%
   \COMMENT{}
   To show that it does so for~$\F_k$, consider a nonempty%
   \COMMENT{}
   star
 $$\sigma=\big\{(A_i,B_i):i=0,\ldots,n\big\}\sub \vSr\sm\{\rv\}$$
 in~$\F_k$ (where $\vS:=\vS_k$) with $\vr\le (A_0,B_0)$. Then
\begin{equation}\label{eq:S1matroid}
   \vr\le (A_0,B_0)\le (B_i,A_i) \text{ for all }i\ge 1.
\end{equation}
  We have to show that
 $$\sigma' = \big\{(A'_i,B'_i): i=0,\dots,n \big\}\in \F_k$$
  for $(A'_i,B'_i) := f\shift(\!\vr)(\vso) (A_i,B_i)$.

  From Lemma~\ref{lem:shifting} we know that $\sigma'$ is a star. Since $(X,Y)$ emulates~$\vr$,%
   \COMMENT{}
   we have $\sigma'\sub \vS_k$ by~\eqref{eq:S1matroid}.%
   \COMMENT{}
   It remains to show that $\starorder{\sigma'} < k$. We show that, in fact,
 \begin{equation}\label{eq:ranksum}
  \big(\starorder{\sigma'}=\big)\quad  r(Y\cap B_0)+\sum_{i=1}^n r(X\cup B_i)- n\, r(M) \le\starorder\sigma\,;
 \end{equation}
 as $\starorder\sigma < k$ by our assumption that $\sigma\in\F_k$, this will complete the proof.

By submodulary of the rank function, we have
  \begin{align}\nonumber
    r(Y\cap B_0)+r(Y\cup B_0)&\le r(Y)+r(B_0)\\
    \nonumber
    \text{and}\quad r(X\cup B_i)+r(X\cap B_i)&\le r(X)+r(B_i)\quad \text{for }i = 1,\dots,n.
  \end{align}
 For our proof of~\eqref{eq:ranksum} we need to show that the sum of the first terms in these $n+1$ inequalities is at most the sum of the last terms.%
   \COMMENT{}
   This will follow from these inequalities once we know that the sum of the second terms is at least the sum of the third terms. So let us prove this, i.e., that
 \begin{equation}\label{eq:ranknew}
 r(Y\cup B_0) + \sum_{i=1}^n r(X\cap B_i)\ge r(Y) + n\, r(X)\,.
 \end{equation}
 For $i=1,\dots,n$ let us abbreviate $A_i^* := A_1\cup\ldots\cup A_i$ and $B_i^* := B_1\cap\ldots\cap B_i$.

Since $\sigma$ is a star we have $A_i\sub B_j$ whenever $i\ne j$. Hence $A_n^*\subseteq B_0$, giving 
  \begin{equation}
   r(Y\cup B_0)\ge r(Y\cup A_n^*),
    \label{eq:matroid1}
  \end{equation}
and $A_{i+1}\sub B_i^*$ for $i \ge 1$. Hence $B_i^*\cup B_{i+1}\supe E$. By submodularity, this implies
  \begin{align}\nonumber
r(X\cap B_i^*)+r(X\cap B_{i+1})&\ge r(X\cap (B^*_i\cap B_{i+1})) + r(X\cap (B_i^*\cup B_{i+1}))\\
    &= r(X\cap B_{i+1}^*) + r(X)\nonumber
  \end{align}
   for each $i=1,\ldots,n-1$.\vadjust{\penalty-200} Summing this for $i=1,\ldots,n-1$, and recalling that $B^*_1 = B_1$, we obtain%
   \COMMENT{}
 \begin{equation}\label{eq:matroid2}
 \sum_{i=1}^n r(X\cap B_i)\ge r(X\cap B_n^*) + (n-1)\, r(X)\,.
 \end{equation}

Since $\{X,Y\}$ and $\{B_n^*,A_n^*\}$ are bipartitions of~$E$, so is $\{X\cap B_n^*,Y\cup A_n^*\}$. Moreover, we have $\vr\le (X\cap B_n^*,Y\cup A_n^*)$ since $\vr\le (X,Y)$ and $\vr\le (B^*_n,A^*_n)$ by~\eqref{eq:S1matroid}, and we also have $(X\cap B_n^*,Y\cup A_n^*)\le (X,Y)\le\vrdash$. It would therefore contradict our choice of~$(X,Y)$ if we had $\ord(X\cap B_n^*,Y\cup A_n^*) < \ord(X,Y)$.%
   \COMMENT{}
  Hence $\ord(X\cap B_n^*,Y\cup A_n^*) \ge \ord(X,Y)$, and therefore%
    \COMMENT{}
  \begin{equation}
  r(X\cap B_n^*) + r(Y\cup A_n^*) \ge r(X)+r(Y).
  \label{eq:matroid3}
  \end{equation}
 Adding up inequalities~\eqref{eq:matroid1}, \eqref{eq:matroid2},~\eqref{eq:matroid3} we obtain~\eqref{eq:ranknew}, proving~\eqref{eq:ranksum}.
\end{proof}

\begin{LEM}\label{lem:matroidTDs}
  $M$ has an ${S}_k$-tree over $\F_k$ if and only if  it has tree-width~$<k$.  More precisely, $M$~has an $S_k$-tree $(T,\alpha)$ over~$\F_k$ if and only if it admits a \td\ $(T,\tau)$ of width~$< k$.%
   \COMMENT{}
\end{LEM}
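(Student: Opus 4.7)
The plan is to prove the second (more precise) statement, which immediately implies the first. Both directions will use the \emph{same} tree $T$, exploiting the identity $\text{width of }t = \starorder{\sigma_t}$, where $\sigma_t=\alpha(\vec F_t)$ is the star at~$t$ in an $S_k$-tree $(T,\alpha)$. This identity is an algebraic triviality once we match $F_i$ with $A_i$ at each node.

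For the forward direction, suppose $(T,\alpha)$ is an $S_k$-tree over~$\F_k$. I would define $\tau\colon E\to V(T)$ as follows: for each $e\in E$ and each tree edge $\{t_1,t_2\}$ with $\alpha(t_1,t_2)=(A,B)$, orient $t_1 t_2$ towards $t_2$ iff $e\in B$. Since $\{A,B\}$ is a bipartition of $E$, this is well-defined. Every such orientation of a tree has at least one sink; moreover, because $(T,\alpha)$ is over stars (so $\alpha$ preserves the natural partial ordering on $\vec E(T)$), the sink is unique: for two candidate sinks $v_0, v_n$, the bipartitions along the $v_0$--$v_n$ path are totally ordered, forcing $e$ to lie in $A'\cap B'=\emptyset$. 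Set $\tau(e)$ to be this unique sink. Then at any node $t$ with neighbours $s_1,\dots,s_d$ and star $\sigma_t=\{(A_i,B_i): (A_i,B_i)=\alpha(s_i,t)\}$, the preimage $F_i:=\tau^{-1}(V(T_i))$ of the $s_i$-component of $T-t$ equals $A_i$, because $e\in F_i$ iff the edge $ts_i$ is oriented away from $t$ iff $e\in A_i$. Hence
\[\text{width of } t \;=\; \sum_{i=1}^d r(E\sm F_i) - (d-1)\,r(M) \;=\; \sum_{i=1}^d r(B_i) - (d-1)\,r(M) \;=\; \starorder{\sigma_t} \;<\; k,\]
using $\sigma_t\in\F_k$. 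For the edge case where $T=\{t\}$, we have $\sigma_t=\emptyset\in\F_k$, which by our convention $\starorder{\emptyset}=r(M)$ means $r(M)<k$, matching the stipulated single-node width $r(M)$.

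For the converse, suppose $(T,\tau)$ has width $<k$. Define $\alpha(t_1,t_2):=(F_{t_1,t_2},F_{t_2,t_1})$, where $F_{t_i,t_j}:=\tau^{-1}(V(T_i))$ for the component $T_i$ of $T-t_1 t_2$ containing~$t_i$. Each such pair is a bipartition of~$E$, so lies in~$\vU$. At every node $t$ with neighbours $s_1,\dots,s_d$, setting $(A_i,B_i):=\alpha(s_i,t)$ gives $A_i\cap A_j=\emptyset$ for $i\ne j$ (since they come from disjoint components of $T-t$), hence $A_i\sub B_j$; so $\sigma_t:=\{(A_i,B_i):i=1,\dots,d\}$ is a star. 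The same computation as above shows $\starorder{\sigma_t}=\text{width of }t<k$, so $\sigma_t\in\F_k$. Finally, Lemma~\ref{lem:matroidTW} guarantees that every element of $\sigma_t$ lies in $\vS_k$, so $\alpha$ takes values in $\vS_k$ and $(T,\alpha)$ is indeed an $S_k$-tree over $\F_k$. Again the single-node case matches: $\sigma_t=\emptyset\in\F_k$ iff $r(M)<k$.

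The only genuinely delicate point is the uniqueness of the sink in the forward direction; this is where we need the fact that $(T,\alpha)$ is over \emph{stars} (so that $\alpha$ respects the natural ordering on $\vec E(T)$), and where the assumption that the $(A,B)$ are bipartitions (so $A\cap B=\emptyset$) is essential. Everything else is a bookkeeping calculation that reduces the tree-width of a matroid node to the quantity $\starorder{\sigma}$ defining membership in~$\F_k$.
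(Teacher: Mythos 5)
Your proof is correct and follows essentially the same approach as the paper's: define $\tau$ by sending each element $e$ to the unique sink of the edge-orientation of $T$ induced by $e$, and conversely define $\alpha$ from a tree-decomposition by taking the components of $T-e$. Your write-up is more detailed in the forward direction (where the paper just asserts the width bound is "easily seen") by explicitly identifying $F_i=A_i$, establishing the width identity $\operatorname{width}(t)=\starorder{\sigma_t}$, and justifying uniqueness of the sink via preservation of the natural order on $\vec E(T)$.
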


\begin{proof}
 For the forward implication, consider any $S_k$-tree $(T,\alpha)$ of~$M$. Given $e\in E$, orient every edge $st$ of~$T$, with $\alpha(s,t) = (A,B)$ say, towards~$t$ if $e\in B$, and let $\tau$ map $e$ to the unique sink of~$T$ in this orientation. Then $(T,\tau)$ is a \td\ of~$M$. If $(T,\alpha)$ is over $\F_k$, the decomposition is easily seen to have width less than~$k$.%
   \COMMENT{} 

Conversely, let $(T,\tau)$ be a \td\ of~$M$ of width~$<k$. For every edge $e=st$ of~$T$, let $T_s$ and~$T_t$ be the components of $T-e$ containing $s$ and~$t$, respectively. Let
 $$\alpha(s,t) := \big(\tau^{-1}(T_s),\tau^{-1}(T_t)\big)\in\vU.$$
   Since every node $t$ has width less than~$k$, its associated star $\{\,\alpha(s,t) : st\in E(T)\}$ of separations is in~$\F_k$. (This includes the case of~$|T|=1$.)%
   \COMMENT{}
   By Lemma~\ref{lem:matroidTW} this implies that $\alpha(\vec E(T))\sub \vS_k$, so $(T,\alpha)$~is an $S_k$-tree over~$\F_k$.\looseness=-1
\end{proof}

Theorem~\ref{thm:strong} now yields the following duality theorem for matroid tree-width.

\begin{THM}[Tangle-treewidth duality theorem for matroids]
~\\[1pt]
  Let $M$ be a matroid, and let $k>0$ be an integer. Then the following statements are equivalent:
  \begin{enumerate}[\rm (i)]\itemsep=0pt\vskip-3pt\vskip0pt
  \item $M$ has tree-width at least $k$.
  \item $M$ has no $S_k$-tree over $\F_k$.
  \item $M$ has an $\F_k$-tangle of~$S_k$.
 \qed \end{enumerate}
\end{THM}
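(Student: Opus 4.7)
My plan is to obtain this theorem as a direct application of the abstract duality theorem (Theorem~\ref{thm:strong}) combined with the preceding lemmas in this section. The equivalence (i)$\Leftrightarrow$(ii) is immediate from Lemma~\ref{lem:matroidTDs}: its contrapositive says that $M$ has no $S_k$-tree over $\F_k$ iff its tree-width is at least~$k$.

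For (ii)$\Leftrightarrow$(iii) I would invoke Theorem~\ref{thm:strong} applied to the universe $\vU$ of bipartitions of~$E$, the submodular separation system~$\vS_k$, and the family~$\F_k$. That $\F_k$ consists of stars is built into its definition. The separability lemma just proved gives that $\vS_k$ is $\F_k$-separable. The only remaining hypothesis is that $\F_k$ be standard for $\vS_k$, i.e.\ that $\F_k$ forces every $\vr\in\vS_k$ that is trivial in~$\vS_k$. This is essentially vacuous: for $\vr = (A,B)$ to be trivial, some bipartition $(C,D)$ of~$E$ would have to satisfy both $\vr < (C,D)$ and $\vr < (D,C)$, forcing $A\subsetneq C$ and $A\subsetneq D$; since $C\cap D = \emptyset$ for bipartitions, this is impossible once $E\ne\emptyset$, so $\vS_k$ contains no trivial separations at all and standardness holds vacuously. (The degenerate case $E = \emptyset$ is easy to handle directly.)

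With all hypotheses of Theorem~\ref{thm:strong} in place, that theorem yields the dichotomy `either an $\F_k$-tangle of~$S_k$ exists, or an $S_k$-tree over $\F_k$ exists, but not both', which is precisely (ii)$\Leftrightarrow$(iii). The main obstacle in the overall argument does not lie in the present theorem but already behind us, in the $\F_k$-separability lemma whose proof rested on Lemma~\ref{lem:matroidTW} together with a careful submodularity-of-rank calculation. Relative to that, the present theorem is essentially a formal corollary of the abstract duality machinery together with the translation provided by Lemma~\ref{lem:matroidTDs}.
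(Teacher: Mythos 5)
Your overall architecture matches the paper's: deduce (i)$\Leftrightarrow$(ii) from Lemma~\ref{lem:matroidTDs}, and (ii)$\Leftrightarrow$(iii) from Theorem~\ref{thm:strong} together with the $\F_k$-separability lemma. However, your argument for standardness contains a genuine error. You claim that $\vS_k$ contains no trivial separations because $A\subsetneq C$ and $A\subsetneq D$ is impossible for a bipartition $\{C,D\}$. But $A\subseteq C\cap D=\emptyset$ forces only $A=\emptyset$, and then $\emptyset\subsetneq C$ and $\emptyset\subsetneq D$ is perfectly satisfiable once both $C$ and $D$ are nonempty. Thus $\vr=(\emptyset,E)$ \emph{is} trivial in $\vS_k$ as soon as $\vS_k$ contains any nondegenerate bipartition (which happens for almost all $M$ and $k$ of interest), contradicting your claim that there are no trivial separations.

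The conclusion that $\F_k$ is standard nevertheless holds, but one has to check it directly: the only possible trivial separation is $(\emptyset,E)$, and $\F_k$ forces it because $\{(E,\emptyset)\}$ is a singleton star with
\[
\starorder{\{(E,\emptyset)\}} \;=\; r(M) + \bigl(r(\emptyset)-r(M)\bigr) \;=\; 0 \;<\; k,
\]
so $\{(E,\emptyset)\}\in\F_k$. (If $E=\emptyset$ then $(E,\emptyset)$ is degenerate and there are no trivial separations at all, so standardness is vacuous.) With this corrected verification of standardness your proof is complete and coincides with the paper's.
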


\section{Tangle duality for tree-decompositions of small adhesion}\label{sec:adhesion}

To demonstrate the versatility of Theorem~\ref{thm:strong}, we now deduce a duality theorem for a new width parameter: one that bounds the width and the adhesion of a \td\ independently, that is, allows the first bound to be greater.

Recall that the \emph{adhesion} of a \td{} $(T,\V)$ of a  graph $G=(V,E)$ is the largest size of an attachment set, the number $\max_{st\in E(T)} \abs{V_{s}\cap V_t}$. (If $T$ has only one node~$t$, we set the adhesion to~0.) Trivially if a \td{} has width~$<k-1$ it has adhesion~$<k$, and it is easy to convert it to a \td\ of the same width and adhesion~$<k-1$.

The idea now is to have a duality theorem whose tree structures are the \td s of adhesion~$<k$ and width less than $w-1 \ge k-1$. For $w=k$ this should default to the duality for tree-width discussed in Section~\ref{sec:twd}. 

Let $\vU$ and $\vS_k$ be as defined at the end of Section~\ref{sec:AbstractDuality}. Recall that $\vS_k$ is separable, by Lemma~\ref{lem:separable}. Let 
\[ \F^w_k=\big\{\sigma\subseteq \vS_k\mid \sigma=\{(A_i,B_i):i=0,\ldots,n\}\text{ is a star with $\textstyle\big\lvert\bigcap_{i=0}^n B_i\big\rvert<w$}\big\},\]
(As before, we let $\bigcap_{i=0}^n B_i := V$ if $\sigma=\emptyset$, so $K_1$ is an $S_k$-tree over~$\F_k$ if $|G|<w$.) Note that, for $w=k$, we have $\F_k^w = \F_k$ as defined in Section~\ref{sec:twd}.


\begin{LEM}\label{lem:twdadh-shitable}
  $S_k$ is $\F^w_k$-separable.
\end{LEM}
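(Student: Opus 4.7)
The plan is to adapt the proof of Lemma~\ref{lem:twd-shiftable} (the case $w = k$). The new twist is that, for $w > k$, the route via Lemma~\ref{lem:Fsep} is no longer available: a star in $\F^w_k$ yields an auxiliary separation $(B^*, B_0)$ whose order may lie in $[k, w)$, so a bare emulator of $\vr$ in $\vS_k$ cannot be applied to it. I~would therefore prove $\F^w_k$-separability directly, using the stronger output of the minimum-order construction buried in the proof of Lemma~\ref{lem:separable}.

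Given nondegenerate, nontrivial $\vr, \rvdash \in \vS_k$ with $\vr \le \rvdash$, neither forced by $\F^w_k$, I~would choose $\vso = (X, Y) \in \vU$ of minimum order with $\vr \le \vso \le \rvdash$. The proof of Lemma~\ref{lem:separable} shows that $\vso \in \vS_k$ and emulates $\vr$ in $\vS_k$, and, crucially for us, that $\ord(\vso \vee \vs) \le \ord(\vs)$ holds for \emph{every} $\vs \in \vU$ with $\vr \le \vs$, not only for $\vs \in \vS_k$. By symmetry it then suffices to verify that $\vso$ emulates $\vr$ in $\vS_k$ for $\F^w_k$.

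Take a star $\sigma = \{(A_i, B_i) : 0 \le i \le n\} \sub \vSr \sm \{\rv\}$ in $\F^w_k$ with $\vr \le (A_0, B_0)$, and write $\sigma'$ for its shift. As in the proof of Lemma~\ref{lem:twd-shiftable}, Lemma~\ref{lem:shifting} makes $\sigma'$ a star, and applying the emulation property of $\vso$ to $(A_0, B_0)$ and to each $(B_i, A_i) \ge \vr$ for $i \ge 1$ places $\sigma'$ inside $\vS_k$. For the cardinality bound, set $B^* := \bigcap_{i=1}^n B_i$; the star condition forces $A_0 \sub B^*$, and closure of $\vU$ under $\vee$ and $\wedge$ (exactly as in Lemma~\ref{lem:twd-shiftable}) puts $(B^*, B_0) \in \vU$ with $\vr \le (B^*, B_0)$ and $\ord(B^*, B_0) = |\bigcap_{i=0}^n B_i| < w$. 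The strengthened minimum-order inequality then yields $\ord(\vso \vee (B^*, B_0)) \le \ord(B^*, B_0) < w$, while distributivity of intersection over union identifies $(X \cup B^*) \cap (Y \cap B_0) = \bigcap_{i=0}^n B'_i$. Hence $|\bigcap_i B'_i| < w$ and $\sigma' \in \F^w_k$.

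The main obstacle is conceptual rather than computational: one has to notice that the abstract emulation property used in Lemma~\ref{lem:twd-shiftable} does not suffice when $w > k$, and instead extract from the proof of Lemma~\ref{lem:separable} the sharper submodular estimate $\ord(\vso \vee \vs) \le \ord(\vs)$ valid for all $\vs \in \vU$ above $\vr$. Once this estimate is in hand, the calculation is essentially the same as in the $w = k$ case.
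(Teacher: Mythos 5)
Your proof is correct and takes essentially the same route as the paper's. The paper also proves $\F^w_k$-separability directly, choosing $(X,Y)$ of minimum order with $\vr\le(X,Y)\le\rvdash$, and then re-derives inline exactly your "strengthened" inequality: it argues that $(X\cap B^*, Y\cup B_0)$ is a competing candidate, so $\ord(X\cap B^*,Y\cup B_0)\ge\ord(X,Y)$, and then submodularity gives $\ord(X\cup B^*,Y\cap B_0)\le\ord(B^*,B_0)<w$ — which is precisely the observation you package as "$\ord(\vso\vee\vs)\le\ord(\vs)$ for all $\vs\in\vU$ with $\vr\le\vs$." Your framing (and the point that bare emulation in $\vS_k$ is insufficient here since $(B^*,B_0)$ may have order in $[k,w)$) matches the paper's reasoning exactly.
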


\begin{proof}
  Let $\vr,\rvdash\in\vS_k$ be given: nondegenerate, nontrivial, not forced by $\F^w_k$, and satisfying $\vr\le\vrdash$. Pick $\vso = (X,Y)\in\vS_k$ with $\vr\le (X,Y)\le\vrdash$ and $|X,Y|$ minimum.%
   \COMMENT{}
   We claim that $(X,Y)$ emulates $\vr$ in~$\vS_k$ for~$\F^w_k$, and that $(Y,X)$ emulates $\rvdash$ in~$\vS_k$ for~$\F^w_k$. By symmetry,%
   \COMMENT{}
   it is enough to prove that $(X,Y)$ emulates $\vr$ for~$\F^w_k$. 

The proof of Lemma~\ref{lem:separable} shows that $(X,Y)$ emulates~$\vr$. To show that it does so in~$\F^w_k$, consider a nonempty%
   \COMMENT{}
   star
 $$\sigma=\big\{(A_i,B_i):i=0,\ldots,n\big\}\sub \vSr\sm\{\rv\}$$
 in~$\F^w_k$ (where $\vS:=\vS_k$) with $\vr\le (A_0,B_0)$. Then
\begin{equation}\label{eq:S2}
   \vr\le (A_0,B_0)\le (B_i,A_i) \text{ for all }i\ge 1.
\end{equation}
  We have to show that
 $$\sigma' = \big\{(A'_i,B'_i): i=0,\dots,n \big\}\in \F^w_k$$
  for $(A'_i,B'_i) := f\shift(\!\vr)(\vso) (A_i,B_i)$.

From Lemma~\ref{lem:shifting} we know that $\sigma'$ is a star. Since $(X,Y)$ emulates~$\vr$,%
   \COMMENT{}
   we have $\sigma'\sub \vS_k$ by~\eqref{eq:S2}.%
   \COMMENT{}
   It remains to show that $\big|\bigcap_{i=0}^n B'_i\big| < w$. As in Lem\-ma~\ref{lem:twd-shiftable}, we shall prove this by rewriting the intersection of all the~$B'_i$ as an intersection of the two sides of a suitable separation,%
   \COMMENT{}
   and use submodularity and the choice of~$(X,Y)$ to show that this separation has order~$<w$.

By~\eqref{eq:S2} and the definition of~$f\shift(\!\vr)(\vso)$, we have $(A'_0,B'_0) = (A_0\cup X,B_0\cap Y)$, while $(A'_i,B'_i) = (A_i\cap Y,B_i\cup X)$ for $i\ge 1$.
  Since the $(A_i,B_i)$ are separations, i.e.\ in~$\vU$, 
  so is $\big(\bigcap_{i=1}^n  B_i,\bigcup_{i=1}^n A_i\big)$.
  As trivially $(V,B_0)\in\vU$, this implies that, for $B^*:= \bigcap_{i=1}^n B_i$, also%
    \COMMENT{}
  $$\Big(\bigcap_{i=1}^n B_i \cap V\,,\, \bigcup_{i=1}^n A_i\cup B_0\Big) \specrel={\eqref{eq:S2}} (B^*,B_0) \in\,\vU\,.$$
Note that
\begin{equation}\label{eq:B}
\ord(B^*,B_0) = \abs{B^*\cap B_0} = \big|\bigcap_{i=0}^n B_i\big| < w
\end{equation}
since $\sigma\in\F^w_k$.

As $\vr\le (X,Y)$, and also $\vr\le (B^*,B_0)$ by~\eqref{eq:S2}, we further have
 $$\vr\le (X\cap B^*, Y\cup B_0) \le (X,Y)\le\vrdash.$$
 Hence if $\ord(X\cap B^*, Y\cup B_0) < \ord(X,Y)$ then this would contradict our choice of~$(X,Y)$.%
   \COMMENT{}
   Therefore $\ord(X\cap B^*, Y\cup B_0) \ge \ord(X,Y)$. As
 $$\ord(X\cap B^*,Y\cup B_0) + \ord(X\cup B^*, Y\cap B_0)\ \le\ \ord(X,Y)+\ord(B^*,B_0)$$
by submodularity, we deduce that
 $$\ord(X\cup B^*, Y\cap B_0)\ \le\ \ord(B^*,B_0)\ <\ w$$
by~\eqref{eq:B}. Hence
 $$\Big|\bigcap_{i=0}^n B'_i\,\Big| = \Big|(B_0\cap Y) \cap \bigcap_{i=1}^n (B_i\cup X)\Big| = \big| B_0\cap Y, B^*\cup X\big| < w\,, $$
which means that $\sigma'\in\F^w_k$ as desired.
\end{proof}

The following translation lemma is proved like Lemma~\ref{lem:twd-tree}:%
   \COMMENT{}

\begin{LEM}
  $G$ has an $S_k$-tree over $\F^w_k$ if and only if it has a \td{} of width~$ < w-1$ and adhesion~$ < k$.%
   \COMMENT{}
\end{LEM}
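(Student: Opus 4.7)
The plan is to mimic the proof of Lemma~\ref{lem:twd-tree} essentially verbatim, noting that its argument tracks the two key numerical parameters separately and can be read off cleanly in the bipartite form needed here: the width of the decomposition corresponds to the bound in the definition of $\F^w_k$, while the adhesion corresponds to the order bound $k$ defining $\vS_k$. What Lemma~\ref{lem:twd-tree} proves for the combined parameter $k = w$ already contains, after inspection, all the ingredients needed to separate the two bounds.

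For the forward direction, suppose $(T,\alpha)$ is an $S_k$-tree over~$\F^w_k$. I would define, as in Lemma~\ref{lem:twd-tree},
\[
V_t := \bigcap\big\{\, B : (A,B) = \alpha(s,t),\ st\in E(T)\,\big\}
\qquad (t\in T).
\]
By the argument given there, $(T,\V)$ with $\V = (V_t)_{t\in T}$ is a \td{} of~$G$, and its adhesion sets satisfy $V_s\cap V_t = A\cap B$ whenever $(A,B) = \alpha(s,t)$. Since $\alpha$ takes values in~$\vS_k$, every adhesion set has size $<k$, so the \td{} has adhesion $<k$. Since $(T,\alpha)$ is over~$\F^w_k$, the star at each node $t$ has $|\bigcap_{i} B_i| < w$, i.e.\ $|V_t|<w$, so the width is $<w-1$.

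For the converse, suppose $(T,\V)$ is a \td{} of width $<w-1$ and adhesion $<k$. Define $\alpha\colon\vec E(T)\to\vU$ exactly as in the proof of Lemma~\ref{lem:twd-tree}: for each edge $t_1t_2\in E(T)$, let $T_i$ be the component of $T-t_1t_2$ containing~$t_i$, set $U_i := \bigcup_{t\in V(T_i)} V_t$, and put $\alpha(t_1,t_2) := (U_1,U_2)$. That proof shows $U_1\cap U_2 = V_{t_1}\cap V_{t_2}$, which by the adhesion bound has size $<k$, so $\alpha$~takes its values in~$\vS_k$. The same proof also shows that for every node $t$ the formula $V_t = \bigcap\{B : (A,B) = \alpha(s,t),\ st\in E(T)\}$ holds, so the width bound $|V_t|<w$ translates directly into the star at~$t$ lying in~$\F^w_k$. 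Hence $(T,\alpha)$ is an $S_k$-tree over~$\F^w_k$. The empty-star conventions (for $|G|<w$, where $K_1$ is declared an $S_k$-tree over~$\F^w_k$) match the single-node \td{}s whose width is less than $w-1$ trivially.

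There is no real obstacle here: the only thing to verify is that the two numerical bounds in Lemma~\ref{lem:twd-tree}, which coincide when $w=k$, decouple cleanly under the bijective correspondence between $S$-trees (over sets of stars defined by part-size conditions) and \td s. That decoupling is transparent from the two identities $V_s\cap V_t = A\cap B$ and $V_t = \bigcap_i B_i$ recorded in the previous proof, which is why the result reduces to bookkeeping once Lemma~\ref{lem:twd-tree} is available.
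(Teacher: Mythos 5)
Your proof is correct and follows exactly the approach the paper intends: the paper's own ``proof'' is the single line that the lemma ``is proved like Lemma~\ref{lem:twd-tree},'' and your write-up does precisely that, tracking the two identities $V_s\cap V_t = A\cap B$ and $V_t = \bigcap_i B_i$ to decouple the adhesion bound (from $\alpha$ taking values in $\vS_k$) from the width bound (from the stars lying in $\F^w_k$). Nothing is missing.
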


Theorem~\ref{thm:strong} and our two lemmas imply the following duality theorem:

\begin{THM}[Tangle-treewidth duality for bounded adhesion]~\\[1pt]
  The following assertions are equivalent for all finite graphs $G\ne\emptyset$ and integers ${w\ge k>0\!:}$
   \vskip-3pt\vskip0pt
  \begin{enumerate}[\rm (i)]\itemsep=0pt
  \item $G$ has an $\F^w_k$-tangle of~$S_k$.
  \item $G$ has no $S_k$-tree over $\F^w_k$.
  \item $G$ has no \td{} of width~$<w-1$ and adhesion~$<k$.\qed
  \end{enumerate}
\end{THM}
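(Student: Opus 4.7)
The plan is to derive this theorem as a direct corollary of the abstract tangle-tree duality theorem (Theorem~\ref{thm:strong}), applied with $\vS := \vS_k$ and $\F := \F^w_k$, combined with the translation lemma stated immediately before the theorem. Once the hypotheses of Theorem~\ref{thm:strong} are in place, its conclusion is precisely the equivalence (i) $\leftrightarrow$ (ii): exactly one of an $\F^w_k$-tangle of $S_k$ and an $S_k$-tree over $\F^w_k$ exists, which is the contrapositive form given in the theorem. The translation lemma then supplies (ii) $\leftrightarrow$ (iii) by negating both sides.

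First I would verify the hypotheses of Theorem~\ref{thm:strong}. Every element of $\F^w_k$ is a star by definition, so $\F^w_k \subseteq 2^{\vU}$ is a set of stars. The $\F^w_k$-separability of $\vS_k$ is exactly Lemma~\ref{lem:twdadh-shitable}. What remains is to check that $\F^w_k$ is standard for $\vS_k$, i.e.\ that it forces every $\vs \in \vS_k$ that is trivial in~$\vS_k$. So let $\vs = (A,B) \in \vS_k$ be trivial, witnessed by some $s' = \{C,D\} \in S_k$ with $(A,B) < (C,D)$ and $(A,B) < (D,C)$. In the ambient universe $\vU$ of vertex separations, this forces $B \supseteq C \cup D = V$, hence $B = V$. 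Since $(A,V) \in \vS_k$, we have $|A| = \ord(\vs) < k \le w$, so $\{\sv\} = \{(V,A)\}$ is a singleton star with $|A| < w$ and therefore lies in $\F^w_k$; thus $\F^w_k$ forces~$\vs$, as required.

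With all hypotheses confirmed, Theorem~\ref{thm:strong} gives (i) $\leftrightarrow$ (ii) directly, and the translation lemma gives (ii) $\leftrightarrow$ (iii). The main obstacle of the whole package has already been handled upstream inside Lemma~\ref{lem:twdadh-shitable}: namely, the submodularity argument that uses the minimality of $\ord(X,Y)$ to bound $\ord(X \cup B^*, Y \cap B_0) \le \ord(B^*, B_0) < w$, which is the genuinely delicate step allowing a star of small interior $\bigcap_i B_i$ to be shifted along $\vso = (X,Y)$ while keeping the new interior small. Once that estimate is granted, together with the equally routine translation between $S_k$-trees over $\F^w_k$ and tree-decompositions of width $<w-1$ and adhesion $<k$ (a direct generalization of Lemma~\ref{lem:twd-tree}, with the adhesion bound now coming from $\sigma \subseteq \vS_k$ and the width bound from $|\bigcap_i B_i| < w$), the theorem falls out as pure bookkeeping.
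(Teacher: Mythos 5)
Your proposal is correct and follows the paper's proof exactly: apply Theorem~\ref{thm:strong} with $\vS = \vS_k$ and $\F = \F^w_k$, using Lemma~\ref{lem:twdadh-shitable} for $\F^w_k$-separability and the translation lemma for (ii) $\leftrightarrow$ (iii). You are, if anything, slightly more thorough than the paper, which does not spell out the (easy) verification that $\F^w_k$ is standard for $\vS_k$; your check that triviality of $(A,B)$ forces $B=V$ and $|A|<k\le w$, hence $\{(V,A)\}\in\F^w_k$, is exactly right.
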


\ifArXiv

\section{Weakly Submodular Partition Functions}\label{sec:partitionsub}

Amini, Mazoit, Nisse and Thomass\'e~\cite{MazoitPartition}, and Lyaudet, Mazoit and Thomass\'e~\cite{MazoitPushing}, proposed a framework to unify duality theorems in graph minor theory which, unlike ours, is based exclusively on partitions. Their work, presented to us by Mazoit in the summer of 2013, inspired us to look for possible simplifications, for generalizations to separations that are not partitions, and for applications to tangle-like dense objects not covered by their framework. Our findings are presented in this paper and its sequel~\cite{DiestelOumDualityII}. Although our approach differs from theirs, we remain indebted to Mazoit and his coauthors for this inspiration.

Since the applications of our abstract duality theorem include the applications of~\cite{MazoitPartition}, it may seem unnecessary to ask whether our result also implies theirs directly. However, for completeness we address this question now.%
   \COMMENT{}

A~\emph{partition} of a finite set~$E$ is a set of disjoint subsets of~$E$, possibly empty, whose union is~$E$. We write $\P(E)$ for the set of all partitions of~$E$. In~\cite{MazoitPartition}, any function $\P(E)\to \mathbb{R}\cup\{\infty\}$ is called a \emph{partition function} of~$E$. We abbreviate $\psi(\{A_1,\dots,A_n\})$ to $\psi(A_1,\dots,A_n)$, but note that the partition remains unordered. A partition function $\psi$ is called \emph{weakly submodular} in~\cite{MazoitPartition} if, for every pair $(\A,\B)$ of partitions of~$E$ and every choice of $A_0\in\A$ and $B_0\in\B$, one of the following holds with $\A =: \{A_0,\ldots,A_n\}$ and $\B =: \{B_0,\ldots,B_m\}$:
\begin{enumerate}[(i)]
\item there exists a set $F$ such that $A_0\subseteq F\subseteq A_0\cup (E\sm B_0)$ and $\psi(A_0,\ldots,A_n)>\psi(F,A_1\sm F,\ldots,A_{n}\sm F)$;
\item $\psi(B_0,\ldots,B_m)\ge \psi\big(B_0\cup (E\sm A_0),B_1\cap A_0,\ldots,B_{m}\cap A_0\big)$.
\end{enumerate}

Let us translate this to our framework. Given $A\sub E$, let $\bar A:= E\sm A$. Then $\vU := \{ (A,\bar A) : A\sub E\}$ is a universe. Given a partition function~$\psi$ of~$E$, let\looseness=-1
 $$\vS_k = \big\{(A,\bar A)\in \vU :  \psi(A,\bar A) < k\big\}.$$%
   \COMMENT{}
Every partition $\A = \{A_0,\ldots,A_n\}$ defines a star $\{(A_0,\bar A_0),\dots, (A_n,\bar A_n)\}\sub\vU$, which we denote by~$\sigma(\A)$. Let
\[\F_k := \big\{ \sigma(\A) : \A\in\P(E)\text{ and } \psi(\A) < k \big\}\cup \{\{(\bar X,X)\}: \abs{X}\le 1,~ \psi(\bar X, X)<k\}.\]%
\COMMENT{}%
 If all the stars in $\F_k$ are subsets of~$\vS_k$, we call $\psi$ \emph{monotone}. All the  weakly submodular partition functions used in~\cite{MazoitPartition} for applications are monotone, and we do not know whether any exist that are not.%
   \COMMENT{}

\begin{LEM}\label{lem:weaksub-sep}
  If $\psi$ is weakly submodular, then $\vS_k$ is $\F_k$-separable.
\end{LEM}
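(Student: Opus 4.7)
The proof adapts the strategy of Lemma~\ref{lem:twd-shiftable} (and the analogous matroid separability lemma), with the weak submodularity of~$\psi$ standing in for full submodularity. Given nondegenerate, nontrivial $\vr,\rvdash\in\vS_k$ not forced by~$\F_k$ and satisfying $\vr\le\vrdash$, I~would pick $\vso=(X,\bar X)\in\vU$ with $\vr\le\vso\le\vrdash$ so as to minimize $\psi(X,\bar X)$; since $\vr$ itself is a candidate, this forces $\vso\in\vS_k$. By the symmetry between $(\vr,\vso)$ and $(\vrdash,\svo)$, it suffices to show that $\vso$ emulates $\vr$ in~$\vS_k$ for~$\F_k$.

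This entails two things: (a)~for every $\vs=(C,\bar C)\in\vS_k\sm\{\rv\}$ with $\vs\ge\vr$, we have $\vs\vee\vso\in\vS_k$ (emulation in~$\vS_k$); and (b)~for every star $\sigma(\A)\in\F_k$ with $\A=\{A_0,\dots,A_n\}$ and $\vr\le(A_0,\bar A_0)$, the shifted star $\sigma(\A')$ for $\A':=\{A_0\cup X,\,A_1\sm X,\dots,A_n\sm X\}$ also lies in~$\F_k$, i.e.\ $\psi(\A')<k$. For both I plan to apply the weak submodularity of~$\psi$ to the pair of partitions $(\{\bar X,X\},\B)$ with pivots $\bar X$ and~$B_0$, where $\B=\{C,\bar C\}$, $B_0=C$ in~(a) and $\B=\A$, $B_0=A_0$ in~(b). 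Conclusion~(ii) then reads
\[\psi(C,\bar C)\ \ge\ \psi(C\cup X,\,\bar C\cap\bar X)\ =\ \psi(\vs\vee\vso)\]
in~(a), and
\[\psi(\A)\ \ge\ \psi(A_0\cup X,\,A_1\cap\bar X,\dots,A_n\cap\bar X)\ =\ \psi(\A')\]
in~(b); since both right-hand partitions have $\psi<k$ by assumption, this is exactly what we need. The remaining singleton stars $\{(\bar Y,Y)\}\in\F_k$ with $|Y|\le 1$ shift to $\{(\bar Y\cup X,\,Y\cap\bar X)\}$, which is again of this form because $|Y\cap\bar X|\le|Y|\le 1$, and the new separation lies in~$\vS_k$ by~(a).

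The main hurdle is to rule out conclusion~(i) of weak submodularity in each application; this is precisely where the minimality of~$\vso$ will be used. Condition~(i) produces a set $F$ with $\bar X\sub F\sub\bar X\cup\bar B_0$ and $\psi(F,\,X\sm F)<\psi(\bar X,X)=\psi(\vso)$. Since $\bar X\sub F$ gives $\bar F:=E\sm F\sub X$, we have $X\sm F=\bar F$, so $\{F,\bar F\}$ is a bipartition of~$E$. Writing $\vr=(A,B)$ and $\vrdash=(D_1,D_2)$, the inclusions $\vr\le\vso$ and $\vr\le\vs$ (respectively $\vr\le(A_0,\bar A_0)$) give $A\sub X\cap B_0\sub\bar F$ and $B\supe\bar X\cup\bar B_0\supe F$, while $\vso\le\vrdash$ gives $\bar F\sub X\sub D_1$ and $F\supe\bar X\supe D_2$. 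Hence $(\bar F,F)\in\vU$ is a candidate for~$\vso$ with strictly smaller $\psi$-order, contradicting minimality; so~(ii) must hold in each case, completing both~(a) and~(b).
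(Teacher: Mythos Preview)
Your proof is correct and follows essentially the same approach as the paper's: choose $\vso$ between $\vr$ and $\vrdash$ of minimum $\psi$-value, then in each application of weak submodularity take the bipartition $\{X,\bar X\}$ as~$\A$ with pivot~$\bar X$, so that alternative~(i) would produce a candidate $(\bar F,F)$ with $\vr\le(\bar F,F)\le(X,\bar X)\le\vrdash$ of strictly smaller $\psi$-value, contradicting minimality. The paper handles the singleton stars and the partition stars in the same way you do (your case~(a) giving emulation, case~(b) the $\F_k$-closure, and the $|Y|\le 1$ stars treated via the $n=0$ observation); only the notation differs.
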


\begin{proof}
  Let $\vS=\vS_k$. Let $\vr,\rvdash\in\vS$ that are not forced by~$\F_k$ and satisfy $\vr\le\vrdash$ be given. Pick $\vso = (X,Y)\in\vS$ with $\vr\le \vso\le\vrdash$ and $\psi(X,Y)$ minimum.
   We claim that $\vso$ emulates $\vr$ in~$\vS$ for~$\F_k$, and that $\svo$ emulates $\rvdash$ in~$\vS$ for~$\F_k$. By symmetry, it is enough to prove that $\vso$ emulates $\vr$ for~$\F_k$. 

  We first prove that $\vso$ emulates~$\vr$. Let $\vs=(A,B)\in \vSr\sm\{\rv\}$ be given. Since $\psi$ is weakly submodular, one of the following assertions holds:%
   \COMMENT{}
  \begin{enumerate}[\rm (i)]\itemsep=0pt
  \item there exists $F$ such that $Y\subseteq F\subseteq Y\cup B$ and $\psi(X,Y)>\psi(\bar F,F)$;%
     \COMMENT{}
  \item $\psi(A,B\ge \psi(A\cup X,B\cap Y)$.
  \end{enumerate}
  Since $A\cap X\sub \bar F\sub X$, we have $\vr\le (\bar F,F)\le \vrdash$. So (i) does not hold, by the choice of $(X,Y)$. So by (ii), $\vso\vee\vs =(A\cup X,B\cap Y)\in \vS$. This proves that $\vso$ emulates~$\vr$.%

  Now let us show that stars can be shifted. Let 
  \[\sigma=\{(A_i,B_i):i=0,\ldots,n\}\]
  be a star in~$\F_k\cap {\vSr}$, with $(A_0,B_0)\ge \vr$. 
  We have to show that
 \[\sigma' = \big\{(A'_i,B'_i): i=0,\dots,n \big\}\in \F_k\]
  for $(A'_i,B'_i) := f\shift(\!\vr)(\vso) (A_i,B_i)$.  Since $\vr\le (A_0,B_0)\le (B_i,A_i)$ for $i\ge 1$, we have
  $(A'_0,B'_0) = (A_0\cup X,B_0\cap Y)$, while $(A'_i,B'_i) = (A_i\cap Y,B_i\cup X)$ for $i\ge 1$.

 If $n=0$, then $\abs{B_0'}\le \abs{B_0}\le 1$ and so $\sigma'=\{(A_0',B_0')\}\in\F_k$.
 If $n\neq 0$, then $\psi(A_0,\ldots,A_n) < k$.
By the minimal choice of~$\vso=(X,Y)$, there exists no $F$ such that $X\subseteq F\subseteq X\cup A_0$ and $\psi(X,Y)>\psi(F,\bar F)$ (as earlier).
Applying the weak submodularity of $\psi$ with $(X,Y)$ and $(A_0,\ldots,A_n)$, we deduce that 
\[\psi(A'_0,\dots,A'_n) = \psi(A_0\cup X,\ A_1\cap Y,\,\ldots\,,A_n\cap Y)\le \psi(A_0,\ldots,A_n) < k.\]
Thus, $\sigma'\in \F_k$.
\end{proof}

In~\cite{MazoitPartition}, a \emph{$k$-bramble} for a weakly submodular partition function~$\psi$ of~$E$ is a non-empty set $\B$ of pairwise intersecting subsets of~$E$ that contains an element from every partition $\A$ of $E$ with ${\psi(\A) < k}$. It is \emph{non-principal} if it contains no singleton set~$\{e\}$. 
In our terminology, Amini et al.~\cite{MazoitPartition} prove that there exists a non-principal $k$-bramble for~$\psi$ if and only if there is no $S_k$-tree over~$\F_k$; they call this a `partitioning $k$-search tree'.

Now any $k$-bramble~$\B$ defines an orientation $O$ of~$S_k$: given $\{A,B\}\in S_k$ exactly one of $A,B$ must lie in~$\B$,%
   \COMMENT{}
  and if $B$ does we put $(A,B)$ in~$O$. Clearly $O$ is consistent%
   \COMMENT{}
   and avoids non-singleton stars in~$\F_k$, and if $\B$ is non-principal it avoids all singleton stars in $\F_k$. Conversely, given an orientation $O$ of~$S_k$, let ${\B := \{ B : (A,B)\in O\}}$. If $O$ is consistent, no two elements of~$\B$ are disjoint. If $O$ avoids singleton stars in~$\F_k$, then $\B$ is non-principal.%
   \COMMENT{}
   And finally, if $\psi$ is monotone and $O$ avoids~$\F_k$, then $\B$ contains an element from every partition $\A=\{A_1,\dots, A_n\}$ of~$E$ with $\psi(\A) < k$: since $\sigma(\A)\in\F_k$ there is $(A_i,\bar A_i)\in \sigma(\A)\sm O$, which means that $(\bar A_i,A_i)\in O$%
   \COMMENT{}
   and thus $A_i\in\B$.

Lemma~\ref{lem:weaksub-sep} and Theorem~\ref{thm:strong} thus imply the duality theorem of Amini et al.~\cite{MazoitPartition} for monotone weakly submodular partition functions:

\begin{THM}
  The following assertions are equivalent for all monotone weakly submodular partition functions $\psi$ of a finite set~$E$ and $k>0$:
  \begin{enumerate}[\rm (i)]\itemsep=0pt
  \item There exists a non-principal $k$-bramble for~$\psi$.
  \item $S_k$ has an $\F_k$-tangle.
  \item There exists no $S_k$-tree over $\F_k$.
  \item There exists no partitioning $k$-search tree.
  \end{enumerate}
\end{THM}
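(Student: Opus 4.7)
My plan is to derive the equivalences by chaining the abstract duality theorem with the bramble--orientation translation sketched in the paragraphs just before the statement. The logical backbone will be (ii)$\leftrightarrow$(iii) by Theorem~\ref{thm:strong}, (iii)$\leftrightarrow$(iv) by the definitional identification of $S_k$-trees over~$\F_k$ with partitioning $k$-search trees, and (i)$\leftrightarrow$(ii) by the correspondence $\B\leftrightarrow O$ between non-principal $k$-brambles and $\F_k$-tangles of~$S_k$.

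For (ii)$\leftrightarrow$(iii), I would verify the hypotheses of Theorem~\ref{thm:strong}: the set $\F_k$ consists of stars (each $\sigma(\A)$ is a star, and the singletons trivially are), and it is standard for~$\vS_k$ because a separation $(A,\bar A)\in \vS_k$ that is trivial in~$\vS_k$ must have $A\subseteq \bar X\cap X=\emptyset$ for some witnessing separation $\{X,\bar X\}$, so $A=\emptyset$ and $(\bar A,A)=(E,\emptyset)$ is forced by a singleton-set partition lying in~$\F_k$ (this is where one uses that the singleton stars with $|X|\le 1$ are explicitly thrown into~$\F_k$). Together with Lemma~\ref{lem:weaksub-sep}, which supplies $\F_k$-separability of~$\vS_k$, Theorem~\ref{thm:strong} then gives (ii)$\leftrightarrow$(iii) as a direct application. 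The equivalence (iii)$\leftrightarrow$(iv) is then just unpacking the definition of a `partitioning $k$-search tree' from~\cite{MazoitPartition}: an $S_k$-tree $(T,\alpha)$ over~$\F_k$ assigns to every node $t$ a partition of~$E$ of $\psi$-value~$<k$ (or a singleton leaf-star), and this is exactly the tree structure Amini et al.\ call a partitioning $k$-search tree.

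The translation (i)$\leftrightarrow$(ii) is carried out essentially in the text just before the theorem, and I would formalise it as follows. Given a non-principal $k$-bramble~$\B$ and $\{A,\bar A\}\in S_k$, the partition $\{A,\bar A\}$ has $\psi$-value $<k$, so $\B$ meets it in at least one side; pairwise intersection of elements of~$\B$ forces exactly one side to lie in~$\B$, and we set $O:=\{(A,\bar A)\in\vS_k : \bar A\in\B\}$. Consistency of~$O$ follows because $(A,\bar A)\le (C,\bar C)$ with $(\bar A,A),(C,\bar C)\in O$ would put the disjoint sets $A,\bar C$ both into~$\B$. To see $O$ avoids~$\F_k$: for a multi-element star $\sigma(\A)\in\F_k$ the $A_i$ form a partition with $\psi(\A)<k$, so some $A_i\in\B$, giving $(\bar A_i,A_i)\in O$ and hence $(A_i,\bar A_i)\notin O$; for singleton stars $\{(\bar X,X)\}\in\F_k$ with $|X|\le 1$ the assumption that $\B$ is non-principal excludes $X\in\B$, so $(\bar X,X)\notin O$. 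Conversely, from an $\F_k$-tangle~$O$ of~$S_k$ define $\B:=\{B : (A,B)\in O\}$; consistency gives pairwise intersection of members of~$\B$, avoidance of the singleton stars in~$\F_k$ gives non-principality, and \emph{here} we use monotonicity: for any partition $\A$ with $\psi(\A)<k$ the star $\sigma(\A)$ lies in~$\F_k$ and, by monotonicity, in~$\vS_k$, so some $(A_i,\bar A_i)\in\sigma(\A)$ fails to lie in~$O$, whence $\bar A_i\in\B$ meets~$\A$.

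The main obstacle, such as it is, is the subtle use of monotonicity in the $O\mapsto \B$ direction: without monotonicity the stars in~$\F_k$ need not lie in~$\vS_k$, so $O$ (being only an orientation of~$S_k$) is not a priori required to avoid them, and the bramble one extracts may fail to hit some low-value partition. Everything else is book-keeping around Theorem~\ref{thm:strong} and Lemma~\ref{lem:weaksub-sep}, which have done the real work.
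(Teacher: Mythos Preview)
Your proposal is correct and follows exactly the route the paper takes: the equivalence (ii)$\leftrightarrow$(iii) comes from Theorem~\ref{thm:strong} via Lemma~\ref{lem:weaksub-sep}, (iii)$\leftrightarrow$(iv) is the definitional identification with partitioning $k$-search trees, and (i)$\leftrightarrow$(ii) is the bramble--orientation correspondence spelled out in the paragraph preceding the theorem, with monotonicity used precisely in the $O\mapsto\B$ direction. One slip: in that last step you wrote ``whence $\bar A_i\in\B$'', but from $(A_i,\bar A_i)\notin O$ and hence $(\bar A_i,A_i)\in O$ you get $A_i\in\B$, which is the element of~$\A$ you want.
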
%
   \COMMENT{}

\fi

\section{Further applications}\label{sec:further}

There are some obvious ways in which we can modify the sets $\F$ considered so far in this section to create new kinds of highly cohesive substructures and obtain associated duality theorems as corollaries of Theorem~\ref{thm:strong}. For example, we might strengthen the notion of a tangle by forbidding not just all the 3-sets of separations whose small sides together cover the entire graph or matroid, but forbid all such $m$-sets with $m$ up to some fixed value $n>3$. The resulting set $\F$ can then be replaced by its subset $\F^*$ of stars without affecting the set of consistent orientations avoiding~$\F$, just as in Lemma~\ref{T*toT}.%
   \COMMENT{}

Similarly, we might like \td s whose decomposition trees have degrees of at least~$n$ at all internal nodes: graphs with such a \td, of width and adhesion~$<k$ say, would `decay fast' along $(<k)$-separations. Such \td s can be described as $S_k$-trees over the subset of all $(\ge n)$-sets and singletons in the $\F_k$ defined in Section~\ref{sec:twd}.%
   \COMMENT{}

Another ingredient we might wish to change are the singleton stars in $\F$ associated with leaves. For example, we might be interested in \td s whose leaf parts are planar, while its internal parts need not be planar but might have to be small. Theorem~\ref{thm:strong} would offer dual objects also for such decompositions.

Conversely, it would be interesting to see whether other concrete highly cohesive substructures than those discussed in the preceding sections can be described as $\F$-tangles for some~$\F$ of a suitable set $S$ of separations~-- of a graph or something else.

Bowler~\cite{B14KkandNPC} answered this in the negative for complete minors in graphs, a natural candidate. Using the terminology of~\cite{DiestelBook16} for minors $H$ of~$G$, let us say that a separation $(A,B)$ of $G$ \emph{points to} an $I H\sub G$ if this $I H$ has a branch set in $B\sm A$ but none in~$A\sm B$. A~set of oriented separations \emph{points to} a given $IH$ if each of its elements does. Clearly, for every $IK_k\sub G$ exactly one of $(A,B)$ and $(B,A)$ in $\vS_k$ points to this~$IK_k$.

\begin{THM}{\rm\cite{B14KkandNPC}}
For every $k\ge 5$ there exists a graph $G$ such that for no set $\F\sub 2^{\vS_k}$ of stars are the $\F$-tangles of $S_k$ precisely the orientations of~$S_k$ that point to some $IK_k\sub G$.
\end{THM}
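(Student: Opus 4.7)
The plan is to reduce the theorem to a concrete combinatorial obstruction and then to exhibit, for each $k\ge 5$, a graph realizing it. First, observe that every orientation of $S_k$ pointing to some $IK_k\sub G$ is automatically consistent: if $\rv<\vs$ both lay in such an orientation pointing to the same $IH$, writing $\rv=(B,A)$ and $\vs=(C,D)$ with $A\sub C$ and $B\supe D$, the branch set of $IH$ witnessed by $\vs\in O$ would sit in $D\sm C\sub B\sm A$, contradicting $\rv\in O$. Hence if some $\F\sub 2^{\vS_k}$ of stars characterised the $IK_k$-pointing orientations as its $\F$-tangles, we could enlarge $\F$ to be maximal with this property, namely the set of all antisymmetric stars $\sigma\sub\vS_k$ not contained in any $IK_k$-pointing orientation of~$S_k$. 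The theorem then reduces to producing a graph~$G$ together with a consistent orientation $O^*$ of $S_k$ such that (a)~$O^*$ points to no $IK_k\sub G$, yet (b)~every star $\sigma\sub O^*$ with $\sigma\sub\vS_k$ is contained in some $IK_k$-pointing orientation of~$S_k$. Indeed,~(b) forces every star-subset of $O^*$ out of~$\F$, so $O^*$ would be an $\F$-tangle that by~(a) is not $IK_k$-pointing, a contradiction.

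To realise (a) and~(b) I would build $G$ from many overlapping $K_k$-subgraphs arranged so that its $IK_k$-minors are spread throughout. A natural candidate is a sufficiently symmetric graph assembled from several copies of $K_{k+1}$ glued along small vertex sets, or a large graph admitting many $IK_k$-minors related by automorphisms. The orientation $O^*$ would be defined by orienting each separation of order~$<k$ towards the ``majority side'' relative to the chosen family of $IK_k$-minors, using a symmetric tie-breaking rule designed so that no single $IK_k$ ends up on the $B$-side of every separation in~$O^*$; this yields~(a). For~(b) one exploits the fact that a star $\sigma=\{(A_1,B_1),\ldots,(A_n,B_n)\}$ in $O^*$ has its small sides $A_i$ pairwise disjoint and confined to a bounded region of $G$, so by richness of $G$ some $IK_k$ lies entirely behind every member of~$\sigma$; one then extends $\sigma$ greedily to a consistent $IK_k$-pointing orientation of~$S_k$.

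The main obstacle is making (a) and~(b) coexist. Condition~(b) demands that $G$ be rich in $IK_k$-minors locally everywhere, whereas~(a) demands that some globally balanced $O^*$ fail to point to any single one of them. The delicate point will be handling ``spread'' stars $\sigma\sub O^*$: while small stars can be redirected towards an $IK_k$ sitting away from the regions they occupy, long stars whose sides $B_i$ collectively cover most of $G$ might leave no room for an $IK_k$ on all of the right sides simultaneously. Calibrating $G$ so that every star actually realised in $O^*$ still leaves enough room for some $IK_k$, while preserving the global imbalance that defeats $O^*$ itself, is the technical heart of the argument. It is also why the threshold $k\ge 5$ appears naturally: for $k\le 4$ the planarity and treewidth obstructions to $K_k$-minors would permit a $\T^*$-style tangle description.
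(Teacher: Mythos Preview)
Your reduction is correct and is exactly the route the paper takes: one must exhibit a graph $G$ together with a consistent orientation $O^*$ of $S_k$ that points to no $IK_k\sub G$, yet in which every star $\sigma\sub O^*$ does point to some $IK_k$. Your consistency check for $IK_k$-pointing orientations and your observation that one may take $\F$ maximal are both sound, and the paper's argument for why such an $O^*$ defeats every candidate~$\F$ is the same as yours.

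The gap is that you never actually construct $G$ or~$O^*$. You propose assembling $G$ from many overlapping copies of $K_{k+1}$ and orienting separations by a majority rule, but you do not carry this out, and you yourself flag the tension between (a) and~(b) as the unresolved ``technical heart''. Bowler's construction, as reported in the paper, is far more economical and goes in the opposite direction from what you envisage: $G$~is simply the $1$-subdivision of~$K_k$, obtained by subdividing every edge exactly once. This graph has only $k+\binom{k}{2}$ vertices; its $IK_k$-models all have the original $k$ vertices as branch vertices and differ only in how the $\binom{k}{2}$ subdividing vertices are distributed among the branch sets. Because $G$ is so small and symmetric, the separations of order~$<k$ and the stars they form are tightly constrained, and an explicit orientation $O$ with properties (a) and~(b) can be written down and verified directly. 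Your anticipated difficulty with ``spread'' stars leaving no room for an $IK_k$ does not arise. The missing idea, then, is minimality rather than richness: instead of a large graph with many $IK_k$'s to average over, a single $TK_k$ with its few, closely related models already does the job.
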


To prove this,%
   \COMMENT{}
   Bowler considered as $G$ a subdivision of $K_k$ obtained by subdividing every edge of $K_k$ exactly once. He constructed an orientation $O$ such that every star $\sigma\sub O$ points to an~$IK_k$ but the entire $O$ does not. This~$O$, then, avoids every $\F$ consisting only of stars not pointing to any~$IK_k$. But any $\F\sub 2^{\vS_k}$ such that the orientations of $S_k$ pointing to an $IK_k$ are precisely the $\F$-tangles must consist of stars not pointing to an~$IK_k$, since any star that does is contained in the unique orientation of~$S_k$ pointing to the $IK_k$ to which this star points.

However, $K_k$~minors can be captured by $\F$-avoiding orientations of $S_k$ if we do not insist that $\F$ contain only stars but allow it to contain \emph{weak stars}: sets of oriented separations that pairwise either cross or point towards each other (formally: consistent antichains in~$\vS_k$).%
   \COMMENT{}
   In~\cite{DiestelOumDualityII} we prove a duality theorem for orientations of separation systems avoiding such collections $\F$ of weak stars.

In~\cite{ProfileDuality}, we show that Theorem~\ref{thm:strong} implies duality theorems for $k$-blocks and for any given subset of $k$-tangles.

\bibliographystyle{abbrv}
\bibliography{collective}

\end{document}